\def\wtd{\widetilde}
\def\what{\widehat}
\DeclareMathOperator*{\argmin}{argmin}
\DeclareMathOperator{\diag}{diag}
\DeclareMathOperator{\rank}{rank}
\DeclareMathOperator{\HH}{H}
\DeclareMathOperator{\T}{T}
\def\ba{\pmb{a}}
\def\bb{\pmb{b}}
\def\bc{\pmb{c}}
\def\be{\pmb{e}}
\def\Bf{\pmb{f}}
\def\bp{\pmb{p}}
\def\bq{\pmb{q}}
\def\bw{\pmb{w}}
\def\bx{\pmb{x}}
\def\by{\pmb{y}}
\def\bz{\pmb{z}}
\newtheorem{proposition}{Proposition}[section]
\newtheorem{theorem}{Theorem}[section]
\newtheorem{corollary}{Corollary}[section]
\theoremstyle{definition}
\newtheorem{remark}{Remark}[section]
\numberwithin{equation}{section}
\numberwithin{figure}{section}
\numberwithin{table}{section}
\def\wtd{\widetilde}
\def\what{\widehat}
\def\ba{\pmb{a}}
\def\bb{\pmb{b}}
\def\bc{\pmb{c}}
\def\be{\pmb{e}}
\def\Bf{\pmb{f}}
\def\bp{\pmb{p}}
\def\bq{\pmb{q}}
\def\bw{\pmb{w}}
\def\bx{\pmb{x}}
\def\by{\pmb{y}}
\def\bz{\pmb{z}}
\def\tu{\mathfrak{u}}
\def\diag{{\rm diag}}
\def\scrR{\mathscr{R}}
\def\wtd{\widetilde}
\def\what{\widehat}
\def\bbC{\mathbb{C}}
\def\bbP{\mathbb{P}}
\def\bbR{\mathbb{R}}
\renewcommand{\algorithmicrequire}{\textbf{Input:}}
\renewcommand{\algorithmicensure}{\textbf{Output:}}
\numberwithin{equation}{section}
\numberwithin{figure}{section}
\numberwithin{table}{section}
\title{A convex dual {problem} for the rational minimax approximation and Lawson's iteration}
\author{Lei-Hong Zhang\thanks{Corresponding author. School of Mathematical Sciences, Soochow University, Suzhou 215006, Jiangsu, China. This work was
 supported in part by the National Natural Science Foundation of China NSFC-12471356,  NSFC-12071332, NSFC-12371380, Jiangsu Shuangchuang Project (JSSCTD202209),  Academic Degree and Postgraduate Education Reform Project of Jiangsu Province, and China Association of Higher Education under grant 23SX0403.
        Email: {\tt longzlh@suda.edu.cn}.} \and Linyi Yang\thanks{School of Mathematical Sciences, Soochow University, Suzhou 215006, Jiangsu, China. Supported in part by Postgraduate Research \& Practice Innovation Program of Jiangsu Province KYCX23\_3226. Email: {\tt lyyang161@stu.suda.edu.cn}.} \and Wei Hong Yang\thanks{School of Mathematical Sciences, Fudan University, Shanghai 200433, People's Republic of China. This work was supported  in part by the National Natural
Science Foundation of China NSFC-72394365. Email: {\tt whyang@fudan.edu.cn}.} \and Ya-Nan Zhang\thanks{School of Mathematical Sciences, Soochow University, Suzhou 215006, Jiangsu, China.  Email: {\tt ynzhang@suda.edu.cn}.}}
 \date{ }
\begin{document}

\maketitle

\begin{abstract}
Computing the discrete rational minimax approximation in the complex plane is challenging. Apart from Ruttan's sufficient  condition, there are few other sufficient conditions for global optimality. The state-of-the-art rational approximation algorithms, such as the adaptive Antoulas-Anderson (AAA), AAA-Lawson, and the rational Krylov fitting (RKFIT) method, perform highly efficiently, but the computed rational approximations may not be minimax solutions. In this paper, we propose a convex programming approach, the solution of which is guaranteed to be the   rational minimax approximation under Ruttan's sufficient condition. Furthermore, we present a new version of Lawson's iteration for solving this convex programming problem. The computed solution can be easily verified as the   rational minimax approximation. Our numerical experiments demonstrate that this updated version of Lawson's iteration generally converges monotonically with respect to the objective function of the convex optimization. It is  an effective competitive approach for  computing the rational minimax approximation, compared to the highly efficient AAA, AAA-Lawson,  and the stabilized Sanathanan-Koerner iteration.
\end{abstract}


\medskip
{\small
{\bf Key words. Rational minimax approximation, Lawson algorithm, Ruttan's optimality condition,   AAA algorithm, AAA-Lawson algorithm}    
\medskip

{\bf AMS subject classifications. 41A50, 41A20, 65D15,  90C46}
}
 
\section{Introduction}\label{sec_intro}
Let $\Omega$ be a compact subset in the complex plane $\bbC$ and $f(x)$ be  a continuous complex-valued function on $\Omega$. Denote by $\bbP_n={\rm span}(1,x,\cdots,x^n)$ the set of complex polynomials with degree less than or equal to $n$. 
For  given   data points $\{(x_j,f_j)\}_{j=1}^m$ sampled from  $f_j=f(x_j)\in \bbC$ ($x_j\in \Omega$) over distinct nodes  ${\cal X}=\{x_j\}_{j=1}^m$, we consider the discrete  rational    approximation in the form 
\begin{equation}\label{eq:bestf0}
\inf_{\xi=p/q\in\scrR_{(n_1,n_2)}}\|\Bf-\xi(\bx)\|_{\infty},
\end{equation}
where $\scrR_{(n_1,n_2)}:=\{{p}/{q}|p\in \bbP_{n_1},~0\not\equiv q\in\bbP_{n_2}\}$, 
$\Bf=[f_1,\dots,f_m]^{\T}\in\bbC^m ~(n_1+n_2+2\le m)$,  $\bx=[x_1,\dots,x_m]^{\T}\in\bbC^m$, and
\begin{equation}\nonumber 
\|\Bf-\xi(\bx)\|_{\infty}=\max_{1\le j\le m}\left|f_j-\frac{p(x_j)}{q(x_j)}\right|.
\end{equation}
When the infimum of \eqref{eq:bestf0} is attainable,  the   function $\xi^*={p^*}/{q^*}\in \scrR_{(n_1,n_2)} $ defined by
\begin{equation}\label{eq:bestf}
{p^*}/{q^*} \in \arg\min_{\xi=p/q\in\scrR_{(n_1,n_2)}}\|\Bf-\xi(\bx)\|_{\infty},
\end{equation}
is called the rational minimax (best or   Chebyshev) approximant  \cite{tref:2019a} of  $f(x)$ over ${\cal X}$.

The rational minimax approximation is a classical topic in approximation theory \cite[Chapter 24]{tref:2019a} and  can be useful in some applications (see e.g., \cite{decz:1974,gust:1983}). 
Different from the traditional polynomial minimax approximation, computing the discrete rational minimax approximation in the complex plane is challenging \cite{natr:2020}. First, it is known that the infimum of \eqref{eq:bestf0} may not be attainable, and even if it is, the   minimax approximation may not be unique \cite{natr:2020,tref:2019a}; moreover, local best   solutions may also exist \cite{natr:2020,tref:2019a}. Though the Kolmogorov condition in the primal and dual forms \cite{rutt:1985} and other types of necessary optimality conditions (see e.g., \cite{elli:1978,gutk:1983,rutt:1985,sava:1977,this:1993,will:1972,will:1979,wulb:1980}) are available for a local best approximation of \eqref{eq:bestf}, there are few effective sufficient conditions for the global optimality. To the  best of authors' knowledge,  Ruttan's optimality condition \cite[Theorems 2.1]{rutt:1985} may be the only useful global optimality of  \eqref{eq:bestf}.  
 
In recent years, a few highly efficient approaches for computing the rational approximations have been proposed. Among them, the adaptive Antoulas-Anderson (AAA) \cite{nase:2018} and the AAA-Lawson algorithm \cite{fint:2018,natr:2020} are remarkable; other efficient methods include the differential correction algorithm \cite{bapr:1972,chlo:1963} and a barycentric version of the differential correction algorithm \cite{fint:2018}, vector fitting \cite{guse:1999},  the rational Krylov fitting (RKFIT) method \cite{begu:2017,gogu:2021}, the Loeb algorithm  \cite{loeb:1957} (independently proposed by Sanathanan and  Koerner \cite{sako:1963}, known as the SK iteration) and the stabilized SK iteration \cite{hoka:2020}. Some of these methods perform highly efficiently, but the computed rational approximants may not be the minimax solutions.

Lawson's iteration \cite{laws:1961,elwi:1976} is an effective traditional method for computing discrete linear {(for example, the polynomial or trigonometric polynomial)} minimax approximations, {whose variants} (e.g., \cite{bama:1970,coop:2007,hoka:2020,loeb:1957,sako:1963}) have been proposed to solve \eqref{eq:bestf}. It is  an iteratively reweighted least-squares (IRLS) iteration. However, the underlying problem that Lawson's iteration intrinsically targets is usually hidden.
One of {the} keys, in our opinion, to design and analyze Lawson's algorithm iteration is to understand the meaning of the associated weights. We shall reveal that the hidden problem related to Lawson's iteration  is the dual (a convex max-min programming  {formulation}) of a certain reformulation of the original minimax problem, and the weights are indeed the dual variables. Such a reformulation of the original minimax problem  is called the primal problem in modern optimization theory. This relation has been described in \cite{yazz:2023} (see also \cite{clin:1972,rice:1969}) {for  linear minimax approximation.} It is possible that there are various types of equivalent reformulations of the original minimax approximation, leading to different dual {formulations}. Note that the Lagrange duality theory applying to a primal ($\min-\max$) problem   leads to a ($\max-\min$)   dual {problem} (see e.g., \cite[Chapter 5.2]{boyd:2004}), and moreover, the optimal objective function value of the dual  provides a lower bound for the optimum of the primal; this is the {\it weak duality} in Lagrange duality theory \cite[Chapter 5.2]{boyd:2004}. A {desirable} property is the so-called {\it strong duality} which means that the gap between the original primal and its dual vanishes. Relying upon the Lagrange duality, a proper reformulation of  the original minimax problem should induce a dual {problem} satisfying  strong duality. This is the case for the linear minimax problem because the original  primal ($\min-\max$)  problem is  convex, and the duality gap vanishes naturally under the Slater condition \cite[Section 5.2.3]{boyd:2004};  the classical Lawson's iteration is a {monotonic} ascent method (with respect to its dual objective function) for solving the dual  \cite{yazz:2023}.

The rational structure of the best solution $\xi^*=p^*/q^*$ complicates the rational minimax problem \eqref{eq:bestf}. First, it has been pointed out that the infimum of \eqref{eq:bestf} may not be attainable {in the given  $\scrR_{(n_1,n_2)}$ (see e.g., \cite{natr:2020})}, and even if  it {is}, {the  best approximation may not be unique \cite{sava:1977}, and local best rational approximations exist \cite{isth:1993}}. Due to the non-convexity of \eqref{eq:bestf}, a reformulation of the original minimax problem as well as its associated dual problem is not that straightforward as that in the linear case. Lawson's iteration   works and is capable of computing the   rational minimax  approximation only if there is no duality gap (i.e., strong duality holds) between its underlying primal and dual {problems}.   Previously, some IRLS iterations, including the Loeb algorithm  \cite{loeb:1957} (i.e.,  the SK iteration \cite{sako:1963}), the stabilized SK iteration \cite{hoka:2020} and the AAA-Lawson iteration \cite{fint:2018,nase:2018},  have been proposed and extended the traditional Lawson's iteration  to the rational case \eqref{eq:bestf}, but there is no discussion on their   underlying dual {problems}; furthermore, there is no convergence proof for these versions of Lawson's iteration, and {even if they do} converge, the computed solution {may not be the minimax approximants}.   We will discuss these IRLS iterations  in  more detail in section \ref{subsec_others}.

Based on the above discussion, the aim of this paper is first to establish a proper dual {problem} of the original minimax problem \eqref{eq:bestf}; particularly, even though the original   \eqref{eq:bestf} is not convex, we will show that    strong duality holds if and only if Ruttan's sufficient condition \eqref{eq:Ruttan} {is fulfilled}. Our established  dual problem is convex  over the probability simplex constraint:
\begin{equation}\label{eq:simplex}
{\cal S}:=\{\bw=[w_1,\dots,w_m]^{\T}\in \bbR^m: \bw\ge 0 ~{\rm and } ~\bw^{\T}\be=1\},~~\be=[1,1,\dots,1]^{\T},
\end{equation}
 and therefore, we can expect to find the best rational   approximation of \eqref{eq:bestf} through solving this convex dual problem under Ruttan's sufficient condition \eqref{eq:Ruttan}. With this dual {problem}, it then becomes natural to design a new version of Lawson' iteration,  a method for solving the dual with its dual variables as the corresponding weights. A more clear description of the framework of this paper is illustrated in   Figure \ref{fig:framework}.

\begin{figure}[H]  
 \tikzstyle{P} = [rectangle, rounded corners, minimum width=1cm, minimum height=1cm, text centered, text width=4.6cm, draw=black,  fill=red!30, drop shadow]
\tikzstyle{D} = [rectangle, rounded corners, minimum width=2cm, minimum height=0cm, text centered, text width=5.42cm, draw=black, fill=blue!30, drop shadow] 
\tikzstyle{M} = [rectangle, rounded corners, minimum width=2cm, minimum height=0cm, text centered, text width=3.42cm, draw=black, fill=white!, drop shadow] 

\tikzstyle{arrow} = [ -->, >=stealth]
\tikzstyle{linepart} = [draw, thick, color=black!50, -latex', dashed] 
\tikzstyle{line} = [draw,  thick,    color=red!,  -latex']
 
\newcommand{\Primal}[2]{node (p#1) [P] {#2}}
\newcommand{\Dual}[2]{node (p#1) [D] {#2}}
    \newcommand{\Method}[2]{node (p#1) [M] {#2}}
\begin{tikzpicture}[node distance=1cm,x=0.675cm,y=0.6cm]\nonumber
\path \Primal{2}{{\small Primal problem: \vskip -7mm$${  (Chebyshev ~appr.)~} {\rm min-max} $$}}; 
\path (p2.east)+(7.5,-6.0) \Dual{3}{{\small Dual problem ${\rm (max-min)}$: \vskip -7mm $$\max_{\bw\in {\cal S}}d_2(\bw)$$}};
  \path (p3.east)+(0,6) \Method{5}{{\small \vskip -8mm$${\rm Lawson's~ iteration}$$ }};  
  \draw [line,red] (p5.south)  -- node [above=1pt, sloped] {solve the dual} (p3);
\draw [line,red] (p2.east)  -- node [above=5pt, sloped] {weak duality $\checkmark$} (p3);
\draw [line,red] (p2.east)   -- node [below=-1pt, sloped] {strong duality under} (p3);
\draw [line,red] (p2.east)  -- node [below=9pt, sloped] {Ruttan's condition} (p3);
\draw (2.5,-1 ) to[bend right,   draw=graphicbackground,sloped]  node [midway] {Lagrange duality} (6.7,-6 );
\draw (2.5,-1 ) to[bend right, below=5pt, sloped]  node [midway] {on a linearization} (6.7,-6 );
\end{tikzpicture}
\caption{Framework of handing the rational minimax approximation of \eqref{eq:bestf}.}
\label{fig:framework}
 \end{figure}

{\bf Contributions}.
We summarize our contributions of this paper based on Figure \ref{fig:framework}.
\begin{itemize}
\item[(1)] We reformulate the original minimax approximation of \eqref{eq:bestf} by a linearization as a primal programming {problem} and construct its convex dual {problem};

\item[(2)] We prove that   strong duality   between the primal and its dual holds  if and only if Ruttan's sufficient condition \eqref{eq:Ruttan} is true;
\item[(3)]  We design a new version of Lawson's iteration based on the dual {problem}, and carry out   numerical experiments on both real and complex problems; our numerical results demonstrate that the new Lawson's iteration is a very effective method to solve {the} original rational minimax problem  \eqref{eq:bestf}.
\end{itemize}

{\bf Paper organization}.  
We organize the paper as follows. In section \ref{sec_dual}, we first discuss a linearization of original rational minimax problem  \eqref{eq:bestf} to form a primal programming {problem}; we will show that under certain conditions, the   rational minimax  approximation $\xi^*$ of \eqref{eq:bestf} can be recovered from this reformulation. Our dual {problem} is based on this primal {problem} and is obtained by the classical Lagrange duality theory.  Weak duality and strong duality will be discussed.  In section \ref{sec_d2}, we shall show that computing the dual objective function value amounts to solving a Hermitian positive semi-definite matrix pencil, and efficient computational techniques will be introduced to compute the objective function value, as well as the evaluation of the computed rational approximation at new nodes. Section \ref{sec_strongduality} is devoted to  strong duality; in particular, we will prove that    strong duality is equivalent to Ruttan's sufficient condition, and thereby, the minimax rational   approximation of \eqref{eq:bestf} can be {found} via solving the  convex dual {problem} under Ruttan's sufficient condition \eqref{eq:Ruttan}. In section \ref{sec_gradHess}, further information on the gradient and the Hessian of the dual objective function are provided. Our new Lawson's iteration for solving the dual {problem} is proposed in section \ref{sec_lawson}, where we will also discuss other types of Lawson's iteration in the literature. Our numerical experiments on the new Lawson's iteration as well as its comparison to others are presented in section \ref{sec_Numerical},  and finally concluding remarks  are drawn in section \ref{sec_conclude}.

{\bf Notation}.
Throughout this paper, ${\tt i}=\sqrt{-1}$ represents the imaginary unit, and for $\mu\in \bbC$, we write $\mu= \mu^{\tt r}+ {\tt i} \mu^{\tt i}$ and $\bar\mu= \mu^{\tt r}- {\tt i} \mu^{\tt i}$ where ${\rm Re }(\mu)=\mu^{\tt r}\in \bbR,~{\rm Im }(\mu)=\mu^{\tt i}\in \bbR$ and $|\mu|=\sqrt{(\mu^{\tt r})^2+(\mu^{\tt i})^2}$. Vectors are denoted in bold lower case letters, and ${\mathbb C}^{n\times m}$ (resp. ${\mathbb R}^{n\times m}$) is the set
of all $n\times m$ complex (resp. real) matrices, with $I_n\equiv [\be_1,\be_2,\dots,\be_n]\in\bbR^{n\times n}$ representing the $n$-by-$ n$ identity matrix, where $\be_i$ is its $i$th column with $i\in [n]:=\{1,2,\dots,n\}$. We use $\diag(\bx)=\diag(x_1,\dots,x_n)$ to denote the diagonal matrix associated with the vector $\bx\in\bbC^n$, and define $\bx./\by=[x_1/y_1,\dots,x_n/y_n]$ for two vectors $\bx,\by\in \bbC^n$ with $y_j\ne 0,~1\le j\le n$.   $\|\bx\|_\alpha=(\sum_{j=1}^n|x_j|^\alpha)^\frac1\alpha$ is the vector $\alpha$-norm ($\alpha\ge 1$) of $\bx\in \bbC^n$. 
For  $A\in\bbC^{m\times n}$, $A^{\HH}$ (resp. $A^{\T}$)  and $A^{\dag}$ are the conjugate transpose (resp. transpose) and the Moore-Penrose inverse of $A$, respectively; ${\rm span}(A)$ represents the column space of $A$.  Also, we denote the  $k$th Krylov subspace generated by a matrix $A$ on $\bb$ by
\[
{\cal K}_k(A,\bb)={\rm span}(\bb,A\bb,\dots,A^{k-1}\bb).
\]

\section{A dual {formulation} of the rational minimax {approximation}} \label{sec_dual}
First, to parameterize  $p/q\in \scrR_{(n_1,n_2)}$, we let $\bbP_{n_1}={\rm span}(\psi_0(x),\dots,\psi_{n_1}(x))$ and $\bbP_{n_2}={\rm span}(\phi_0(x),\dots,\phi_{n_2}(x))$ be the chosen bases for the numerator and denominator  polynomial spaces, respectively. Thus, for any $p/q\in \scrR_{(n_1,n_2)}$, we can write 
\begin{equation}\nonumber
\frac{p(x)}{q(x)}=\frac{[\psi_0(x),\dots,\psi_{n_1}(x)] \ba}{[\phi_0(x),\dots,\phi_{n_2}(x)] \bb},~~\mbox{for ~some~} \ba\in \bbC^{n_1+1}, \bb\in \bbC^{n_2+1}.
\end{equation}  
Corresponding to the set of points ${\cal X}=\{x_j\}_{j=1}^m$ with $|{\cal X}|=m$ nodes, we have the following basis matrix for $p\in \bbP_{n_1}$:
\begin{equation}\nonumber
\Psi=\Psi(x_1,\dots,x_m;n_1):=\left[\begin{array}{cccc}\psi_0(x_1) & \psi_1(x_1) & \cdots & \psi_{n_1}(x_1) \\\psi_0(x_2) & \psi_1(x_2) & \cdots & \psi_{n_1}(x_2)  \\ \vdots & \cdots & \cdots& \vdots  \\\psi_0(x_m) & \psi_1(x_m) & \cdots & \psi_{n_1}(x_m) \end{array}\right],~\Psi_{i,j}=\psi_{j-1}(x_i),
\end{equation}
and similarly, $\Phi=\Phi(x_1,\dots,x_m;n_2)=[\phi_{j-1}(x_i)] \in \bbC^{m\times (n_2+1)}$.

Let $\xi(x)=p(x)/q(x)\in\scrR_{(n_1,n_2)}$ be irreducible.
 If $|\xi(x)|$ is bounded for any $x\in {\cal X}$, then it is easy to see that $q(x)\ne 0$ for any $x\in {\cal X}$, and we define the maximum error
\begin{equation}\label{eq:exi}
e(\xi):=\max_{x\in {\cal X}}|f(x)-\xi(x)|=\|\Bf-\xi(\bx)\|_\infty.
\end{equation}
Associated with $p/q$ is the number 
\begin{equation}\label{eq:defect}
\upsilon(p,q):=\min(n_1-\deg(p),n_2-\deg(q)),
\end{equation} which is called the {\it defect} of $p/q$, where $\deg(p)$ and $\deg(q)$ denote  the degrees of $p$ and $q$, respectively. We say $\xi(x)=p(x)/q(x)$ is {\it non-degenerate} if $\upsilon(p,q)=0$.

\subsection{A linearized reformulation}\label{subsec:linearity}

For the linear minimax approximation in the real case, a well-known equivalent reformulation is to introduce the upper bound of the derivation for each node, which is then minimized  subject to the upper bound conditions. In the real case, the resulting optimization can be expressed as a linear programming {problem} (see e.g., \cite[Section 1.2.2]{boyd:2004} and \cite{yazz:2023}). Following the same idea, for the complex case, a dual {problem} of the reformulation has been discussed in \cite{yazz:2023} and the classical Lawson's iteration \cite{laws:1961} is analyzed and shown to be a  monotonically increasing iteration for solving the dual {problem}; furthermore, a new method, the interior-point method, is proposed to solve   the linear minimax approximation for the complex case. For {computing} the rational approximation, this idea has been used for the real case \cite{dips:2022b,dips:2022a}. The following theorem {describes} a special case to motivate our proposed dual {problem}. 

\begin{theorem}\label{thm:linearity}
 Given $m\ge {n_1}+n_2+2$ distinct nodes ${\cal X}=\{x_j\}_{j=1}^m$ on $\Omega\subset\bbC$, suppose $\xi^*(x)=p^*(x)/q^*(x)\in \scrR_{(n_1,n_2)}$ is the unique solution \eqref{eq:bestf} which is irreducible and non-degenerate (i.e., $\upsilon(p^*,q^*)=0$ in \eqref{eq:defect}). Let 
$$
\eta_\infty =\|\Bf-\xi^*(\bx)\|_\infty.
$$
Then for any $\alpha\ge 1$, the triple $(\eta_\infty^\alpha,p^*,q^*)$ is also the solution to the following problem 
 \begin{align}\nonumber
&\inf_{\eta\in \bbR,~p\in \bbP_{n_1},~q\in \bbP_{n_2}\setminus\{0\}}\eta \\\label{eq:linearity}
 s.t., ~& |f_jq(x_j)-p(x_j)|^\alpha\le \eta |q(x_j)|^\alpha,~~\forall j\in [m].
\end{align}
\end{theorem}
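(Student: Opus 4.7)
The plan is to reduce to the scalar case $\alpha = 1$ (since the substitution $\nu = \eta^{1/\alpha}$ converts the constraint to $|f_j q(x_j) - p(x_j)| \le \nu |q(x_j)|$ without changing the feasible set in $(p,q)$), and then to verify two things: that $(\eta_\infty, p^*, q^*)$ is feasible for the reformulated program, and that no feasible triple attains a strictly smaller objective.

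Feasibility is straightforward. Because $\xi^*$ is irreducible, $q^*$ cannot vanish on $\mathcal{X}$ (a common zero with $p^*$ would contradict irreducibility, while a non-common zero would force $e(\xi^*) = \infty \ne \eta_\infty$). Hence dividing through by $|q^*(x_j)|$ turns each constraint into $|f_j - \xi^*(x_j)| \le \eta_\infty$, which holds by definition of $\eta_\infty$.

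For optimality, suppose for contradiction that some feasible triple $(\hat\nu, \hat p, \hat q)$ has $\hat\nu < \eta_\infty$. If $\hat q(x_j) \ne 0$ for every $j$, then $\hat p/\hat q \in \scrR_{(n_1,n_2)}$ satisfies $\|\Bf - (\hat p/\hat q)(\bx)\|_\infty \le \hat\nu < \eta_\infty$, which contradicts the minimax property of $\xi^*$. The delicate case is when $\hat q(x_{j_0}) = 0$ for some $j_0$: the constraint then forces $\hat p(x_{j_0}) = 0$ but places no bound on the ``effective value'' of $\hat p/\hat q$ at $x_{j_0}$, so one cannot directly bound the max error of $\hat p/\hat q$. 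I would resolve this by introducing the perturbation
\[
\hat p_\epsilon := \hat p + \epsilon p^*, \qquad \hat q_\epsilon := \hat q + \epsilon q^*,
\]
for small $\epsilon > 0$. Since $q^*(x_j) \ne 0$ for every $j$, the perturbed denominator is nonzero on $\mathcal{X}$ for all sufficiently small $\epsilon$. At nodes where $\hat q(x_j) \ne 0$, continuity gives $|f_j - \hat p_\epsilon(x_j)/\hat q_\epsilon(x_j)| \le \hat\nu + o(1)$; at nodes where $\hat q(x_j) = \hat p(x_j) = 0$, cancellation of $\epsilon$ yields $\hat p_\epsilon(x_j)/\hat q_\epsilon(x_j) = \xi^*(x_j)$, so the error is $\le \eta_\infty$. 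Choosing $\epsilon$ small enough that $\hat\nu + o(1) < \eta_\infty$, we obtain $\|\Bf - (\hat p_\epsilon/\hat q_\epsilon)(\bx)\|_\infty \le \eta_\infty$.

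The minimax property forces equality and then the \emph{uniqueness} assumption on $\xi^*$ forces $\hat p_\epsilon/\hat q_\epsilon \equiv \xi^*$ as rational functions; cross-multiplying and cancelling the $\epsilon$ term gives $\hat p \, q^* = \hat q \, p^*$. Irreducibility of $\xi^*$ then implies $\hat p = g p^*,\ \hat q = g q^*$ for some nonzero polynomial $g$, and the \emph{non-degeneracy} assumption $\upsilon(p^*,q^*) = 0$ forces $\deg g = 0$, i.e., $g$ is a nonzero constant. But then $\hat q(x_{j_0}) = g\, q^*(x_{j_0}) \ne 0$, contradicting $\hat q(x_{j_0}) = 0$. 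The main obstacle of the proof is precisely this degenerate case: all four hypotheses (finiteness of $\eta_\infty$, irreducibility, uniqueness, and non-degeneracy of $\xi^*$) enter in the closing argument to rule out triples that ``cheat'' by letting $\hat p$ and $\hat q$ share roots at data nodes.
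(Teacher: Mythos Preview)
Your proposal is correct and takes essentially the same route as the paper: reduce to $\alpha=1$, dispose of the easy case where $\hat q$ is nonvanishing on $\mathcal X$, and in the delicate case perturb $(\hat p,\hat q)$ toward $(p^*,q^*)$ to manufacture a second minimax approximant, which uniqueness together with irreducibility and non-degeneracy then rules out. The only cosmetic differences are that the paper replaces your continuity/$o(1)$ bound at the non-vanishing nodes by an explicit choice of constants, and phrases the closing contradiction via the irreducible form of $\hat p/\hat q$ rather than your common-factor polynomial $g$; the logical content is the same.
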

\begin{proof}
See Appendix \ref{AppendixA}.
\end{proof}

It is worth mentioning that  Theorem \ref{thm:linearity}  only provides a sufficient condition to recover $\xi^*$ from \eqref{eq:linearity}; we defer the equivalent condition (Theorem \ref{thm:strongdualityeqvRuttan})  for obtaining $\xi^*$ (not necessarily non-degenerate) from \eqref{eq:linearity} to section \ref{sec_strongduality}.
\subsection{A dual {programming formulation}}\label{subsec:dual}
As explored in \cite{yazz:2023}, for the linear minimax approximation, the classical Lawson's iteration is a  monotonically ascent iteration for solving the dual {problem} associated with the equivalent formulation; similarly,  the linearized reformulation of  \eqref{eq:linearity} attempts to minimize the upper bound of the deviation at every given node. In the following discussion, we shall see that  the dual {problem of \eqref{eq:linearity} amount to  maximizing} the corresponding Lagrange dual function with respect to the dual variables (i.e., Lagrange multipliers) $0\le \bw=[w_1,\dots,w_m]^{\T}\in \bbR^m$, i.e.,  {\it weights} in Lawson's iteration. The purpose of this  subsection is to  derive the Lagrange dual function as well its  dual {problem} for \eqref{eq:linearity}.

To apply the traditional Lagrange duality theory (see e.g., \cite{boyd:2004,nowr:2006}), we can write $p(x_j)$ and $q(x_j)$ as $p(x_j;\ba)$ and $q(x_j;\bb)$, respectively, to  indicate the coefficient vectors $\ba=\ba^{\tt r}+ {\tt i}\ba^{\tt i}$ and $\bb=\bb^{\tt r}+ {\tt i}\bb^{\tt i}$ in $p\in \bbP_{n_1}$ and $q\in \bbP_{n_2}$, respectively. Now, introduce the Lagrange multipliers $0\le \bw=[w_1,\dots,w_m]^{\T}\in \bbR^m$ and define the Lagrange function of \eqref{eq:linearity} as
\begin{align}\nonumber
L(\eta,\ba,\bb;\bw)&=\eta- \sum_{j=1}^mw_j\left(\eta |q(x_j;\bb)|^\alpha-|f_jq(x_j;\bb)-p(x_j;\ba)|^\alpha \right)\\\nonumber
&=\eta\left(1-\sum_{j=1}^mw_j|q(x_j;\bb)|^\alpha\right)+\sum_{j=1}^mw_j |f_jq(x_j;\bb)-p(x_j\ba)|^\alpha.
\end{align}
It is easy to see that for a given $\bw\ge 0$, 
\begin{equation}\nonumber
\inf_{\eta,\ba,\bb}L(\eta,\ba,\bb;\bw)=\left\{\begin{array}{cc}-\infty,& \mbox{if~} 1\ne \sum_j^mw_j|q(x_j;\bb)|^\alpha, \\\inf_{\ba,\bb}\sum_{j=1}^mw_j |f_jq(x_j;\bb)-p(x_j\ba)|^\alpha, & \mbox{if~} 1=\sum_j^mw_j|q(x_j;\bb)|^\alpha.\end{array}\right.
\end{equation}
Therefore, the associated Lagrange dual function is given by 
\begin{equation}\label{eq:Lagrangedualfun2}
\inf_{\begin{subarray}{c} \ba\in \bbC^{n_1+1},~\bb\in \bbC^{n_2+1}\\
            \sum_{j=1}^m w_j |q(x_j;\bb)|^\alpha=1\end{subarray}}\sum_{j=1}^m w_j |f_j q(x_j;\bb)-p(x_j;\ba)|^\alpha,~~0\le \bw\in\bbR^m.
\end{equation}
{As a practical choice of $\alpha\ge 1$, we set $\alpha=2$ in our following discussion.} 
The next theorem asserts that the weak duality of the dual {problem holds, i.e., the objective function value of the dual  at any $\bw\ge 0$ will always provide a lower bound for $\eta_\infty$.}
\begin{theorem}\label{thm:q-dual}
 Given $m\ge {n_1}+n_2+2$ distinct nodes ${\cal X}=\{x_j\}_{j=1}^m$ on $\Omega$, suppose $\xi^*=p^*/q^*\in \scrR_{(n_1,n_2)}$ with $q^*(x_j)\ne 0~\forall j\in[m]$ is a solution to \eqref{eq:bestf}. Let 
$$
\eta_\infty=\|\Bf-\xi^*(\bx)\|_\infty.
$$
Then we have the weak duality:
\begin{equation}\label{eq:rat-dual}
    \sup_{\bw\ge \mathbf{0}}d_2(\bw) =\max_{\bw\in {\cal S}}d_2(\bw)\le (\eta_\infty)^2, 
\end{equation}
where ${\cal S}$ is the probability simplex given in \eqref{eq:simplex} and 
\begin{equation}\label{eq:rat-d}
d_2(\bw)=\min_{\begin{subarray}{c}p\in \bbP_{n_1},~q\in \bbP_{n_2}\\
            \sum_{j=1}^m w_j |q(x_j)|^2=1\end{subarray}}\sum_{j=1}^m w_j |f_j q(x_j)-p(x_j)|^2.
\end{equation}  
\end{theorem}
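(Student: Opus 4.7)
The strategy is classical Lagrange weak duality applied to the linearized primal \eqref{eq:linearity} with $\alpha = 2$. The Lagrange dual function has already been computed in \eqref{eq:Lagrangedualfun2}, so the argument decomposes into three checks: an upper bound on the primal value by $\eta_\infty^2$, weak duality itself, and a normalization argument that converts $\sup_{\bw \geq \mathbf{0}}$ into $\max_{\bw \in {\cal S}}$.

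First, I would observe that the triple $(\eta_\infty^2, p^*, q^*)$ is feasible for \eqref{eq:linearity}: because $q^*(x_j) \neq 0$ for every $j$ by hypothesis, the identity $\eta_\infty = \max_j |f_j - p^*(x_j)/q^*(x_j)|$ clears denominators to give $|f_j q^*(x_j) - p^*(x_j)|^2 \leq \eta_\infty^2 |q^*(x_j)|^2$ for all $j$. Hence the primal infimum in \eqref{eq:linearity} is at most $\eta_\infty^2$. Crucially, this direction does not invoke the uniqueness or non-degeneracy hypotheses of Theorem \ref{thm:linearity}; mere exhibition of one feasible point suffices.

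Second, by standard Lagrange weak duality applied to the Lagrangian $L(\eta, p, q; \bw)$ with multipliers $\bw \geq \mathbf{0}$, one has $\inf_{\eta, p, q} L(\eta, p, q; \bw) \leq \inf\{\eta : (\eta, p, q) \text{ feasible in \eqref{eq:linearity}}\} \leq \eta_\infty^2$. The inner infimum was already identified in \eqref{eq:Lagrangedualfun2} to equal $d_2(\bw)$ whenever the normalization $\sum_j w_j |q(x_j)|^2 = 1$ is achievable (which holds for every $\bw \neq \mathbf{0}$, since $\bbP_{n_2}$ contains a non-zero constant polynomial that can be rescaled), and to equal $-\infty$ otherwise. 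Consequently $d_2(\bw) \leq \eta_\infty^2$ for all relevant $\bw \geq \mathbf{0}$. Finally, to pass from $\bw \geq \mathbf{0}$ to $\bw \in {\cal S}$, I would exploit scale invariance: the substitution $(p, q) \mapsto (p/\sqrt{c}, q/\sqrt{c})$ in \eqref{eq:rat-d} transforms the constraint and the objective consistently, yielding $d_2(c\bw) = d_2(\bw)$ for every $c > 0$. Thus restricting to $\sum_j w_j = 1$ loses no generality, and attainment of the supremum as a maximum follows from concavity and upper semi-continuity of $d_2$ as a pointwise infimum of affine functions of $\bw$, together with compactness of ${\cal S}$.

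The main delicate point I expect is making the homogeneity/normalization step watertight: one has to verify that $(p, q) \mapsto (p/\sqrt{c}, q/\sqrt{c})$ bijects the feasible set for $\bw$ onto that for $c\bw$ and leaves the objective invariant, which in turn requires checking that the normalization $\sum_j w_j |q(x_j)|^2 = 1$ is genuinely achievable throughout ${\cal S}$. Once this invariance is cleanly recorded, the remainder of the proof is routine Lagrange duality and a textbook compactness argument.
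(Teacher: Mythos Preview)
Your proposal is correct and follows essentially the same route as the paper: both arguments bound $d_2(\bw)$ by $(\eta_\infty)^2$ via the feasibility of a rescaled $(p^*,q^*)$ in the inner minimization, and both reduce $\sup_{\bw\ge \mathbf{0}}$ to $\mathcal{S}$ via the homogeneity $d_2(c\bw)=d_2(\bw)$. The only notable difference is that you justify attainment of the maximum over $\mathcal{S}$ (via concavity/upper semi-continuity of the Lagrange dual function and compactness), a point the paper simply leaves implicit.
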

\begin{proof}

We  prove that 
\begin{equation}\nonumber
d_2 (\bw)\le (\eta_\infty)^2,\quad \forall \bw\ge \mathbf{0},
\end{equation} 
and therefore, $\sup_{\bw\ge \mathbf{0}}d_2(\bw) \le (\eta_\infty)^2$. 
The result is trivial if $\bw=\mathbf{0}$.
 For a nonzero $\bw\ge \mathbf{0}$,  use the bases matrices $\Psi$ and $\Phi$, and  
$$
\sum_{j=1}^m w_j |q(x_j)|^2=\|\sqrt{W}\Phi \bb\|_2^2,~~\sum_{j=1}^m w_j |f_j q(x_j)-p(x_j)|^2=\left\|\sqrt{W}[-\Psi,F\Phi]  \left[\begin{array}{c}\ba \\\bb\end{array}\right]\right\|_2^2
$$ 
to rewrite \eqref{eq:rat-d} as
\begin{equation}\label{eq:rat-d-compt}
d_2(\bw)=\min_{\begin{subarray}{c}  \ba\in \bbC^{n_1+1},~ \bb\in \bbC^{n_2+1}\\
            \|\sqrt{W}\Phi \bb\|_2 =1\end{subarray}}\left\|\sqrt{W}[-\Psi,F\Phi]  \left[\begin{array}{c}\ba \\\bb\end{array}\right]\right\|_2^2,
\end{equation}  
where $W=\diag(\bw)$ and $F=\diag(\Bf)$. It is known that the minimization of \eqref{eq:rat-d-compt} is reachable at a pair $(\wtd \ba,\wtd \bb)$  as  it is  a trace minimization for a Hermitian positive semi-definite pencil, and  the solution can be obtained by  \cite[Theorem 2.1]{lilb:2013} (more detailed information on computation of $(\wtd \ba,\wtd \bb)$ will be given in section \ref{sec_d2}).  Let $(\wtd p,\wtd q)$ be the corresponding   polynomials  for \eqref{eq:rat-d}. 

As $\bw\ne 0$ and $q^*(x_j)\ne 0$, we can always choose a scaling $\tau$ so that $p_\tau^*= \tau p^*, ~q_\tau^*= \tau q^* $ satisfying $\sum_{j=1}^m  w_j|q_\tau^*(x_j)|^2 =1$. Thus
\begin{align}\nonumber
d_2(\bw)&=\sum_{j=1}^m w_j |f_j \wtd q(x_j)-\wtd  p(x_j)|^2 \le\sum_{j=1}^m w_j |f_j  q_\tau^*(x_j)-  p_\tau^*(x_j)|^2 \\\label{eq:weakduality}
&= \sum_{j=1}^m w_j| q_\tau^*(x_j)|^2\cdot \left|f_j-\frac{ p_\tau^*(x_j)}{ q_\tau^*(x_j)}\right|^2 
 \le \sum_{j=1}^m w_j| q_\tau^*(x_j)|^2\cdot \|\Bf-\xi^*\|_\infty^2= (\eta_\infty)^2.
\end{align}

Finally, to show $$\sup_{\bw\ge \mathbf{0}}d_2(\bw) =\max_{\bw\in {\cal S}}d_2(\bw),$$ we only need to show 
$\sup_{\bw\ge \mathbf{0}}d_2(\bw) \le \max_{\bw\in {\cal S}}d_2(\bw)$. For any $\bw\ge \mathbf{0}$, let $\wtd \bw= {\bw}{\tau}\in {\cal S}$ with $\tau=\frac{1}{\bw^{\T}\be}$. Note 
\begin{align*}
d_2(\wtd \bw)&=\min_{\begin{subarray}{c}  \ba\in \bbC^{n_1+1},~ \bb\in \bbC^{n_2+1}\\
            \|\sqrt{\wtd W}\Phi \bb\|_2 =1\end{subarray}}\left\|\sqrt{\wtd W}[-\Psi,F\Phi]  \left[\begin{array}{c}\ba \\\bb\end{array}\right]\right\|_2^2\\
            &= \min_{\begin{subarray}{c}  \sqrt{\tau}\ba\in \bbC^{n_1+1},~ \sqrt{\tau} \bb\in \bbC^{n_2+1}\\
            \|\sqrt{W}\Phi(\sqrt{\tau}\bb)\|_2 =1\end{subarray}}\left\|\sqrt{  W}[-\Psi,F\Phi]  \left[\begin{array}{c}\sqrt{\tau}\ba \\\sqrt{\tau} \bb\end{array}\right]\right\|_2^2=d_2( \bw),
\end{align*}
and thus the proof is complete.
\end{proof}
  
\begin{remark}\label{rk:dual}
\item[(1)] Note that the dual {problem} $\max_{\bw\in {\cal S}} d_2(\bw)$ is a convex optimization {problem} (i.e., maximize a concave function over a convex set; see e.g., \cite[Chapter 5.2]{boyd:2004}). 
A further desired property is the so-called {\it strong duality}, that is,
\begin{equation}\label{eq:strongdual}
d_2^*:=\max_{\bw\in {\cal S}} d_2(\bw)=(\eta_\infty)^2
\end{equation} 
which implies that there is a  $\bw^*\in {\cal S}$ so that $d_2(\bw^*)=(\eta_\infty)^2$. For the special case when $\eta_\infty=0$, the solution $\xi^*=p^*/q^*$ is an interpolation for $\Bf$ satisfying $\xi^*(x_j)=f_j$; in this case, strong duality holds because   for any $\bw\in {\cal S}$, $d_2(\bw)=0$ due to $[-\Psi,F\Phi]\left[\begin{array}{c} \ba^* \\ \bb^*\end{array}\right]=0$ where $\ba^*\in \bbC^{n_1+1}$, $\bb^*\in \bbC^{n_2+1}$ are the corresponding coefficient vectors of $p^*$ and $q^*$, respectively.

\item[(2)] Another special case is for $n_2=0$. In this case, $q\in \bbP_0$ and thus we have $q(x) \equiv  b_1$ for some $b_1\in \bbC$. Now, for any $\bw\in {\cal S}$, the constraint $\sum_{j=1}^m w_j |q(x_j)|^2=\sum_{j=1}^m w_j |b_1|^2=1$ leads to $|b_1|=1$ and thus  $d_2(\bw)$ in \eqref{eq:rat-d} becomes
\begin{equation}\nonumber
d_2(\bw)=\min_{\begin{subarray}{c}  \ba\in \bbC^{n_1+1} \end{subarray}}\left\|\sqrt{W}(\Bf b_1-\Psi \ba ) \right\|_2^2 \xlongequal{\text{$\wtd \ba = \ba/b_1$}} \min_{\begin{subarray}{c}  \wtd\ba\in \bbC^{n_1+1} \end{subarray}}\left\|\sqrt{W}(\Bf -\Psi \wtd \ba )  \right\|_2^2
\end{equation}  
which is the exact $L_2$-weighted dual function for the linear minimax problem satisfying   strong duality: $\max_{\bw\in {\cal S}}d_2(\bw)=(\eta_\infty)^2$ (see \cite[Theorem 2.1]{yazz:2023}).

\item[(3)] In general, in section \ref{sec_strongduality}, we will  connect   strong duality \eqref{eq:strongdual}   with Ruttan's sufficient condition \eqref{eq:Ruttan} \cite[Theorem 2.1]{rutt:1985} (see also \cite[Theorem 3]{this:1993}) and the local  Kolmogorov necessary condition (see e.g., \cite[Theorem 1.2]{rutt:1985},  \cite[Theorem 1]{isth:1993} and \cite[Theorem 2]{this:1993})  for a global best rational   approximation of \eqref{eq:bestf}. In particular, we shall show in Theorem \ref{thm:strongdualityeqvRuttan} that   strong duality is equivalent to Ruttan's sufficient condition \eqref{eq:Ruttan}. Moreover, we shall report numerical results to demonstrate that   strong duality (i.e., Ruttan's sufficient condition \eqref{eq:Ruttan}) often holds in practice.
\end{remark}
It is also interesting to point out that the term $\sum_{j=1}^m w_j |f_j q(x_j)-p(x_j)|^2$ has already been used in the rational approximations (see e.g.,  \cite{begu:2015,begu:2017,gogu:2021}) where $\{w_j\}_{j=1}^m$ are interpreted as weights  in the least-squares mode. It is our main goal to develop updating rules for these dual variables and thus lead to various algorithms for solving \eqref{eq:bestf}.

The following result  states that the number of {\it reference points} ({aka} the extreme points), i.e., nodes $x_j\in {\cal X}$ that {achieve} $|f_j- p^*(x_j)/q^*(x_j)|=e(\xi^*)$,  for a solution $p^*/q^*$ of \eqref{eq:bestf} is at least $n_1+n_2+2-\upsilon(p^*,q^*)$. 
\begin{theorem}{\rm (\cite[Theorem 2.5]{gutk:1983})}\label{thm:extremalNo}
Given $m\ge {n_1}+n_2+2$ distinct nodes ${\cal X}=\{x_j\}_{j=1}^m$ on $\Omega$, suppose $\xi^*=p^*/q^*\in \scrR_{(n_1,n_2)}$ is an irreducible rational {approximant} and denote the extremal set   $ {\cal  X}_e(\xi^*)\subseteq {\cal X}$ by 
\begin{equation}\label{eq:extremalset}
 {\cal  X}_e(\xi^*):=\left\{x_j\in {\cal X}:\left|f_j-\frac{p^*(x_j)}{q^*(x_j)}\right|=e(\xi^*)\right\}. 
\end{equation}
If $\xi^*$ is a solution to \eqref{eq:bestf} with 
$
\eta_\infty=\|\Bf-\xi^*(\bx)\|_\infty,
$
 then the cardinality  $|{\cal  X}_e(\xi^*)|\ge  n_1+n_2+2-\upsilon(p^*,q^*)$; that is,  ${\cal  X}_e(\xi^*)$ contains at least $n_1+n_2+2-\upsilon(p^*,q^*)$ nodes. 
\end{theorem}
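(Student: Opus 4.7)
The plan is to argue by contradiction via a first-order perturbation of $\xi^*$ in the spirit of the classical Kolmogorov argument. Suppose $N := |{\cal X}_e(\xi^*)| \le n_1 + n_2 + 1 - \upsilon(p^*, q^*)$, label the extremal nodes $x_{j_1}, \dots, x_{j_N}$, and set $e_k := f_{j_k} - \xi^*(x_{j_k})$, so that $|e_k| = \eta_\infty$. The goal is to exhibit $(\delta p, \delta q) \in \bbP_{n_1} \times \bbP_{n_2}$ for which the perturbed rational
\begin{equation*}
\xi_t \;:=\; \frac{p^* + t\,\delta p}{q^* + t\,\delta q}
\end{equation*}
satisfies $\|\Bf - \xi_t(\bx)\|_\infty < \eta_\infty$ for all sufficiently small $t>0$, contradicting the optimality of $\xi^*$.

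Since $q^*(x_j)\ne 0$ on ${\cal X}$, a direct first-order expansion yields
\begin{equation*}
f_j - \xi_t(x_j) \;=\; e_j \;-\; t\,\frac{h(x_j)}{q^*(x_j)^2} \;+\; O(t^2),\qquad h \;:=\; \delta p\cdot q^* - p^*\cdot \delta q .
\end{equation*}
Hence if $h$ can be chosen so that $h(x_{j_k}) = e_k\, q^*(x_{j_k})^2$ for every $k = 1,\dots,N$, then at each extremal node $f_{j_k}-\xi_t(x_{j_k}) = (1-t)\,e_k + O(t^2)$, whose modulus is strictly below $\eta_\infty$ for small $t>0$; a routine continuity argument propagates the strict decrease to the non-extremal nodes as well.

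The heart of the proof is therefore to show that such $(\delta p,\delta q)$ exist, which splits into two parts. First, a Bezout-type analysis of the $\bbC$-linear map $B:(\delta p,\delta q)\mapsto \delta p\cdot q^* - p^*\cdot \delta q$. Using $\gcd(p^*,q^*)=1$ from irreducibility of $\xi^*$, any identity $\delta p\cdot q^* = p^*\cdot \delta q$ forces $\delta p = r\, p^*$ and $\delta q = r\, q^*$ with a common polynomial factor $r$, whose degree is pinned down by the bounds $\deg(rp^*)\le n_1$ and $\deg(rq^*)\le n_2$ to satisfy $\deg r\le \upsilon(p^*,q^*)$; hence $\ker B\cong \bbP_{\upsilon(p^*,q^*)}$. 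A degree comparison shows $\mathrm{image}(B)\subseteq \bbP_{n_1+n_2-\upsilon(p^*,q^*)}$, and the rank--nullity theorem identifies these two spaces since both have $\bbC$-dimension $n_1+n_2+1-\upsilon(p^*,q^*)$. Second, the $N$ extremal nodes are distinct and $N\le n_1+n_2+1-\upsilon(p^*,q^*)=\dim \bbP_{n_1+n_2-\upsilon(p^*,q^*)}$, so classical polynomial interpolation produces some $h\in \bbP_{n_1+n_2-\upsilon(p^*,q^*)}$ with the prescribed values $e_k\,q^*(x_{j_k})^2$ at the extremal nodes. Surjectivity of $B$ then delivers the required $(\delta p,\delta q)$.

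The step I expect to be the main obstacle is the Bezout/dimension analysis, which requires juggling irreducibility ($\gcd(p^*,q^*)=1$) with possible degeneracy ($\upsilon(p^*,q^*)>0$, so $p^*$ and $q^*$ need not attain their maximum allowed degrees) to pin down $\ker B$ exactly. Once this and the interpolation step are in hand, the perturbation $\xi_t$ strictly improves on $\xi^*$ for small $t>0$, contradicting its optimality and forcing $|{\cal X}_e(\xi^*)|\ge n_1+n_2+2-\upsilon(p^*,q^*)$.
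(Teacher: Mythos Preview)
The paper does not supply its own proof of this theorem; it simply quotes the result from \cite[Theorem~2.5]{gutk:1983} and uses it as a black box. So there is nothing in the paper to compare your argument against line by line.

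That said, your argument is the standard first-order Kolmogorov perturbation proof and is essentially what one finds in the cited reference and classical treatments. The computation of the kernel of $B:(\delta p,\delta q)\mapsto \delta p\,q^*-p^*\,\delta q$ via $\gcd(p^*,q^*)=1$, the degree bookkeeping showing $\mathrm{image}(B)=\bbP_{n_1+n_2-\upsilon(p^*,q^*)}$, and the interpolation step are all correct. Two small points you should make explicit when writing this up: (i) the trivial case $\eta_\infty=0$, in which every node is extremal and the bound holds because $m\ge n_1+n_2+2$; and (ii) the fact that $q^*(x_j)\ne 0$ on ${\cal X}$ (needed for the expansion and to ensure $q^*+t\,\delta q$ stays nonvanishing on ${\cal X}$ for small $t$), which follows from irreducibility of $\xi^*$ together with boundedness of $\xi^*$ on the finite node set. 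With those noted, your proof is complete and matches the classical route.
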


In the following theorem, we show that when   strong duality \eqref{eq:strongdual} holds,  one can choose to  filter out non-reference points in ${\cal X}$ to reduce the computational costs and accelerate the convergence for solving the dual {problem} \eqref{eq:dualPX}. The same strategy has been used in Lawson's iteration for the linear minimax approximations \cite{clin:1972,laws:1961,rice:1969,yazz:2023}.

\begin{theorem}\label{thm:complement}
Under {the assumptions of} Theorem \ref{thm:extremalNo}, if   strong duality \eqref{eq:strongdual} holds, then  
\begin{itemize}
\item[(i)] {\rm (complementary slackness)} for any solution $\bw^*$ of the   dual {problem}:
\begin{equation}\label{eq:dualPX}
\max_{\bw\in {\cal S}} d_2(\bw),
\end{equation}
{where $d_2(\bw)$ is given in \eqref{eq:rat-d-compt}}, it holds that 
$   w_j^*(\eta_\infty -|f_j-\xi^*(x_j)|)=0, \quad\forall j=1,2,\dots,m$;
\item[(ii)]  {\rm (filtering out non-reference points)}  for any subset $\breve{\cal X} \subseteq{\cal X}$ satisfying ${\cal X}_e(\xi^*)\subseteq \breve{\cal X}$, the triple   $((\eta_\infty)^2, p^*,q^*)$ solves 
 \begin{align}\nonumber
&\inf_{\eta\in \bbR,~p\in \bbP_{n_1},~q\in \bbP_{n_2}\setminus\{0\}}\eta \\\label{eq:linearitysubset}
 s.t., ~& |f(x_j)q(x_j)-p(x_j)|^2\le \eta |q(x_j)|^2,~~\forall x_j\in \breve{\cal X},
\end{align}
for which   strong duality  holds too; that is, 
\begin{equation}\label{eq:dualPsubX}
(\eta_\infty)^2 =\max_{\breve{\bw}\in \breve{{\cal S}}} \breve{d}_2(\breve{\bw}),
\end{equation}  
where $\breve{{\cal S}}=\{\breve{\bw}=[\breve{w}_1,\dots,\breve{w}_{\breve{s}}]^{\T}\in \bbR^{\breve{s}}: \breve{\bw}\ge 0 ~{\rm and } ~\breve{\bw}^{\T}\be=1\},
$ $\breve{s}=|\breve{{\cal X}}|$, and
\begin{equation}\label{eq:rat-d-subset}
\breve{d}_2(\breve{\bw})=\min_{\begin{subarray}{c} p\in \bbP_{n_1},~q\in \bbP_{n_2}\\
            \sum_{x_j\in\breve{\cal X}} \breve{w}_j |q(x_j)|^2=1\end{subarray}}~\sum_{x_j\in\breve{\cal X}} \breve{w}_j |f(x_j) q(x_j)-p(x_j)|^2.
\end{equation}  
Moreover, for any solution $\breve{\bw}^*$ of the dual \eqref{eq:dualPsubX}, the pair $(p^*,q^*)$ achieves the minimum  $\breve{d}_2(\breve{\bw}^*)=(\eta_\infty)^2$ of \eqref{eq:rat-d-subset}.
\end{itemize}
\end{theorem}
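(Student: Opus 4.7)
\medskip
\noindent\textbf{Proof proposal.} My plan is to derive both parts from the inequality chain already established inside the proof of Theorem \ref{thm:q-dual}. Recall that for every $\bw\in{\cal S}$ the weak duality argument gave
\begin{equation*}
d_2(\bw)\;\le\;\sum_{j=1}^m w_j|q_\tau^*(x_j)|^2\,|f_j-\xi^*(x_j)|^2\;\le\;\Big(\sum_{j=1}^m w_j|q_\tau^*(x_j)|^2\Big)(\eta_\infty)^2=(\eta_\infty)^2,
\end{equation*}
where the rescaling $\tau$ was chosen so that $\sum_j w_j|q_\tau^*(x_j)|^2=1$. The key observation for part (i) is that strong duality forces both inequalities in this chain to be equalities at $\bw=\bw^*$. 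Tracing the second inequality, which is a termwise estimate $|f_j-\xi^*(x_j)|^2\le(\eta_\infty)^2$ weighted by the nonnegative quantities $w_j^*|q_\tau^*(x_j)|^2$, I conclude that for each $j$ either $w_j^*|q_\tau^*(x_j)|^2=0$ or $|f_j-\xi^*(x_j)|=\eta_\infty$. Since $q^*(x_j)\ne0$ by hypothesis, the first alternative collapses to $w_j^*=0$, which rewrites as the complementary slackness identity $w_j^*(\eta_\infty-|f_j-\xi^*(x_j)|)=0$.

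For part (ii), I would argue by two-sided comparison between the full and restricted duals. Denote by $\breve d_2^*$ the value of \eqref{eq:dualPsubX}. \emph{Upper bound:} Every $\breve\bw\in\breve{\cal S}$ can be extended to a weight vector $\bw\in{\cal S}$ on the full node set by padding zeros on ${\cal X}\setminus\breve{\cal X}$; under this extension both the normalization constraint and the objective in \eqref{eq:rat-d-compt} reduce exactly to the corresponding expressions in \eqref{eq:rat-d-subset}, so $\breve d_2(\breve\bw)=d_2(\bw)\le(\eta_\infty)^2$ by strong duality of the full problem. Hence $\breve d_2^*\le(\eta_\infty)^2$. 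Weak duality applied to the restricted primal \eqref{eq:linearitysubset} then gives $\breve d_2^*\le \breve\eta^2\le(\eta_\infty)^2$, where the second inequality uses that \eqref{eq:linearitysubset} has fewer constraints than \eqref{eq:linearity} and that the triple $((\eta_\infty)^2,p^*,q^*)$ remains feasible for \eqref{eq:linearitysubset}.

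\emph{Lower bound:} Take any maximizer $\bw^*$ of the full dual \eqref{eq:dualPX}. The complementary slackness from part (i) implies $w_j^*=0$ whenever $x_j\notin{\cal X}_e(\xi^*)$, and in particular whenever $x_j\notin\breve{\cal X}$ because ${\cal X}_e(\xi^*)\subseteq\breve{\cal X}$. Therefore the restriction $\breve\bw^*:=\bw^*|_{\breve{\cal X}}$ lies in $\breve{\cal S}$, and the same identification of constraints and objectives as above yields $\breve d_2(\breve\bw^*)=d_2(\bw^*)=(\eta_\infty)^2$. Combining the two bounds gives $\breve d_2^*=\breve\eta^2=(\eta_\infty)^2$, which is strong duality for \eqref{eq:linearitysubset}. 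Finally, to show that $(p^*,q^*)$ attains the minimum in \eqref{eq:rat-d-subset} at any dual optimizer $\breve\bw^*$, I would rerun the weak duality chain on $\breve{\cal X}$: equality in the outer inequality $\breve d_2(\breve\bw^*)=(\eta_\infty)^2$ together with the suitable rescaling $(p_\tau^*,q_\tau^*)$ of $(p^*,q^*)$ exhibits $(p_\tau^*,q_\tau^*)$ as a minimizer, and since $(p^*,q^*)$ and its nonzero scalar multiples represent the same rational function they yield the same objective value.

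The only delicate step is ensuring that the zero-padding/restriction identification of the full and restricted problems is rigorous; once one verifies that the normalization $\sum w_j|q(x_j)|^2=1$ and the Frobenius-type objective are insensitive to indices with $w_j=0$, the rest is bookkeeping. I do not expect any analytical obstacle beyond that, and the argument mirrors the classical complementary-slackness / active-set reduction used in the linear case cited in \cite{clin:1972,laws:1961,rice:1969,yazz:2023}.
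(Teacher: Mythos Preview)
Your proposal is correct and follows essentially the same approach as the paper: part (i) is obtained by forcing equality throughout the weak-duality chain \eqref{eq:weakduality} at $\bw=\bw^*$ and using $q^*(x_j)\ne0$, and part (ii) is proved by the two-sided comparison you describe, using complementary slackness to restrict a full-dual optimizer to $\breve{\cal X}$ and zero-padding (equivalently, the identification $d_2(\bw^*)=\breve d_2(\bw^*|_{\breve{\cal X}})$) for the reverse inequality.
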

\begin{proof}
For (i), by   strong duality \eqref{eq:strongdual} and since $(p^*,q^*)$ is feasible (by a scaling for $q^*$ using $\bw^*$) for the minimization \eqref{eq:rat-d} at $\bw=\bw^*$, we have 
\begin{align*}
(\eta_\infty)^2=d_2(\bw^*)& \le \sum_{j=1}^m w^*_j|f(x_j)q^*(x_j)-p^*(x_j)|^2 \\
&=  \sum_{j=1}^m w^*_j|q^*(x_j)|^2 |f(x_j)-\xi^*(x_j)|^2\\
&\le  (\eta_\infty)^2\sum_{j=1}^m w^*_j|q^*(x_j)|^2=(\eta_\infty)^2,
\end{align*}
which yields $$w^*_j|q^*(x_j)|^2 \left((\eta_\infty)^2-|f(x_j)-\xi^*(x_j)|^2\right)=0, \forall j\in [m].$$ As $q^*$ has no pole at ${\cal X}$, the result $w_j^*(\eta_\infty -|f(x_j)-\xi^*(x_j)|)=0$ $\forall j\in[m]$  follows.

For (ii), according to Theorem \ref{thm:extremalNo} and (i), we know for any solution $\bw^*=[w_1^*,\dots,w_m^*]^{\T}$ of the dual \eqref{eq:dualPX}, $w_j^*=0$ for any $j$ with $x_j\not\in {\cal X}_e(\xi^*)$ in \eqref{eq:extremalset}. For simplicity, assume $ {\cal X}_e(\xi^*)=\{x_j\}_{j=1}^{s}$ and $\breve{\cal X}=\{x_j\}_{j=1}^{\breve{s}}$ with $\breve{s}\ge s$. Denote by $\breve{\eta}_\infty$ the infimum of \eqref{eq:linearitysubset} and by $\breve{d}_2^*$ the maximum of \eqref{eq:dualPsubX}. Similarly to \eqref{eq:rat-dual}, we have the weak duality $\breve{d}_2^*\le (\breve{\eta}_\infty)^2$. Also, as $((\eta_\infty)^2, p^*,q^*)$ is feasible for \eqref{eq:linearitysubset}, $\breve{\eta}_\infty\le {\eta}_\infty$; thus 
\begin{equation}\label{eq:subdual1}
\breve{d}_2^*=\breve{d}_2(\breve{\bw}^*)\le (\breve{\eta}_\infty)^2\le ({\eta}_\infty)^2=d_2(\bw^*)
\end{equation} 
where $\breve{\bw}^*$ and  $ {\bw}^*$ are solutions  for  the duals \eqref{eq:dualPsubX} and  \eqref{eq:dualPX}, respectively.  On the other hand, note, by the complementary slackness, that $w_j^*=0$ for any $j\ge s$, and hence
\begin{align*}
{d}_2({\bw}^*)&=\min_{\begin{subarray}{c}p\in \bbP_{n_1},~q\in \bbP_{n_2}\\
            \sum_{j=1}^m  {w}_j^* |q(x_j)|^2=1\end{subarray}}\sum_{j=1}^m  {w}_j^* |f(x_j) q(x_j)-p(x_j)|^2\\
            &=\min_{\begin{subarray}{c} p\in \bbP_{n_1},~q\in \bbP_{n_2}\\
            \sum_{j=1}^{\breve{s}}  {w}_j^* |q(x_j)|^2=1\end{subarray}}\sum_{j=1}^{\breve{s}}  {w}_j^* |f(x_j) q(x_j)-p(x_j)|^2 = \breve{d}_2({\bw}^*_{[1:\breve{s}]}) \le \breve{d}_2^*,
\end{align*}
where the last inequality follows because $\breve{d}_2^*$ is the maximum of \eqref{eq:dualPsubX} over all $\breve\bw\in \breve{\cal S}$. Together with \eqref{eq:subdual1}, we conclude that $\breve{d}_2^*=\breve{d}_2(\breve{\bw}^*)= (\breve{\eta}_\infty)^2= ({\eta}_\infty)^2=d_2(\bw^*)$. Finally, for any solution $\breve{\bw}^*$ of the dual \eqref{eq:dualPsubX}, suppose the  pair $(\hat p,\hat q)$ achieves the minimum  $\breve{d}_2(\breve{\bw}^*)=(\eta_\infty)^2$ of \eqref{eq:rat-d-subset}. Scale $(p^*,q^*)$ to satisfy the constraint $\sum_{j=1}^{\breve{s}}  \breve{w}_j^* |q^*(x_j)|^2=1$ (i.e., $(p^*,q^*)$ is feasible for the minimization \eqref{eq:rat-d-subset} with $\breve{\bw}=\breve{\bw}^*$), and thus by 
\begin{align*}
\breve{d}_2^*=\breve{d}_2(\breve{\bw}^*)& = \sum_{j=1}^{\breve{s}} \breve{w}^*_j|f(x_j)\hat q(x_j)-\hat p(x_j)|^2\le  \sum_{j=1}^{\breve{s}} \breve{w}^*_j|f(x_j) q^*(x_j)- p^*(x_j)|^2\\
&=  \sum_{j=1}^{\breve{s}} \breve{w}^*_j|q^*(x_j)|^2 |f(x_j)-\xi^*(x_j)|^2  \le  (\eta_\infty)^2\sum_{j=1}^{\breve{s}} \breve{w}^*_j|q^*(x_j)|^2=(\eta_\infty)^2=\breve{d}_2^*,
\end{align*}
we know $(p^*,q^*)$ achieves the minimum  $\breve{d}_2(\breve{\bw}^*)=(\eta_\infty)^2$ of \eqref{eq:rat-d-subset}.
\end{proof}

\section{Computation of $d_2(\bw)$}\label{sec_d2} 

\subsection{Optimality for the dual problem and computation of $d_2(\bw)$}
{We next show that computing the dual function value $d_2(\bw)$ at $\bw\in {\cal S}$, i.e., the minimum of \eqref{eq:rat-d}, can be cast as a generalized eigenvalue problem.}

\begin{proposition}\label{prop:dual_GEP}
For   $ \bw\in {\cal S}$, we have  
\begin{itemize}
\item[(i)]
$\bc(\bw)=\left[\begin{array}{c} \ba(\bw) \\ \bb(\bw)\end{array}\right] \in \bbC^{n_1+n_2+2}$ is a solution of \eqref{eq:rat-d} if and only if it {is} an eigenvector of the Hermitian positive semi-definite generalized eigenvalue problem $(A_{\bw},B_{\bw})$ and  $d_2(\bw)$ is the smallest eigenvalue  satisfying
\begin{equation}\label{eq:dual_GEP}
A_{\bw} \bc(\bw)=d_2(\bw) B_{\bw} \bc(\bw)~ \mbox{ and }~ \bc(\bw)^{\HH}B_{\bw}\bc(\bw) =1,
\end{equation} 
where
\begin{align}\label{eq:dual_GEPA}
A_{\bw}:&=[-\Psi,F\Phi]^{\HH}W[-\Psi,F\Phi]=\left[\begin{array}{cc}\Psi^{\HH}W\Psi & -\Psi^{\HH}FW\Phi \\-\Phi^{\HH} WF^{\HH}\Psi & \Phi^{\HH}F^{\HH}WF\Phi\end{array}\right],\\\label{eq:dual_GEPB}
B_{\bw}:&= [0,\Phi]^{\HH}W [0,\Phi]=\left[\begin{array}{cc}0 & 0 \\0 & \Phi^{\HH}W\Phi \end{array}\right];
\end{align}
\item[(ii)] the  Hermitian matrix $H_{\bw}:=A_{\bw} -d_2(\bw) B_{\bw} \succeq 0$, i.e., $H_{\bw}$ is  positive semi-definite;

\item[(iii)] let $W^{\frac12}\Phi=Q_qR_q$ and $W^{\frac12}\Psi=Q_pR_p$ be the thin QR factorizations where $Q_q\in \bbC^{m\times \wtd n_2}$, $Q_p\in \bbC^{m\times \wtd n_1}$, $R_q\in \bbC^{ \wtd n_2 \times (n_2+1)}$,  $R_p\in \bbC^{\wtd n_1 \times (n_1+1)}$ with $\wtd n_1=\rank(W^{\frac12}\Psi)$ and $\wtd n_2=\rank(W^{\frac12}\Phi)$. Then $d_2(\bw)$ is the smallest   eigenvalue of the following Hermitian positive semi-definite  matrix 
\begin{equation}\label{eq:Heig}
S(\bw):=S_F-S_{qp}S_{qp}^{\HH}\in \bbC^{\wtd n_2\times \wtd n_2},
\end{equation} 
where 
\begin{equation}\nonumber
S_F=Q_q^{\HH} |F|^2Q_q\in \bbC^{\wtd n_2\times \wtd n_2},~~S_{qp}=Q_q^{\HH}F^{\HH} Q_p\in \bbC^{\wtd n_2 \times \wtd n_1}.
\end{equation}
Moreover, let $[Q_p,Q_p^{\perp}]\in \bbC^{m\times m}$ be unitary, then $\sqrt{d_2(\bw)}$ is also the smallest singular value of both $(Q_p^{\perp})^{\HH}FQ_q\in \bbC^{(m-\wtd n_1-1)\times (\wtd n_2+1)}$ {and $(I-Q_pQ_p^{\HH})FQ_q\in \bbC^{m\times (\wtd n_2+1)}$.}
\end{itemize}
\end{proposition}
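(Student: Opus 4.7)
The three parts fit together as a reduction ladder: part (i) recasts the constrained quadratic minimization \eqref{eq:rat-d-compt} as a generalized eigenproblem for the Hermitian positive semi-definite pencil $(A_\bw,B_\bw)$; part (ii) is an immediate consequence of the smallest-eigenvalue characterization; part (iii) eliminates the $\ba$-block by orthogonal projection in order to express $d_2(\bw)$ as a standard (and smaller) Hermitian eigenproblem or a singular value problem. Throughout, the identities $A_\bw=[-\Psi,F\Phi]^{\HH}W[-\Psi,F\Phi]$ and $B_\bw=[0,\Phi]^{\HH}W[0,\Phi]$ combined with $\bc^{\HH}A_\bw\bc=\|\sqrt{W}[-\Psi,F\Phi]\bc\|_2^2$ and $\bc^{\HH}B_\bw\bc=\|\sqrt{W}\Phi\bb\|_2^2$ will let me translate between the data-fitting form \eqref{eq:rat-d-compt} and the matrix form.

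\textbf{Part (i).} I would form the Lagrangian $\mathcal{L}(\bc,\lambda)=\bc^{\HH}A_\bw\bc-\lambda(\bc^{\HH}B_\bw\bc-1)$ and use the Wirtinger calculus to derive the first order condition $A_\bw\bc=\lambda B_\bw\bc$. Left-multiplying by $\bc^{\HH}$ and invoking the constraint gives $\lambda=\bc^{\HH}A_\bw\bc=d_2(\bw)$ at the minimizer; conversely, feasibility of any generalized eigenvector (after rescaling so that $\bc^{\HH}B_\bw\bc=1$, which is possible whenever $\bb$ does not lie in $\ker(\Phi^{\HH}W\Phi)$) shows that the minimum coincides with the smallest eigenvalue of $(A_\bw,B_\bw)$. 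Because $B_\bw$ is only positive semi-definite (the $\ba$-block is zero), I would appeal to \cite[Theorem 2.1]{lilb:2013} already cited beneath \eqref{eq:rat-d-compt} to handle the singular pencil cleanly. The subtle point here is that the $\ba$-block is unconstrained, but as will be seen in (iii) it is determined (up to $\ker\Psi$-ambiguity) by $\bb$ through the projection $\ba=(W^{1/2}\Psi)^{\dag}W^{1/2}F\Phi\bb$; this keeps the pencil problem well posed.

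\textbf{Part (ii).} From part (i) and the non-negativity of $d_2(\bw)$, I would argue as follows. For any $\bc\in\bbC^{n_1+n_2+2}$, if $\bc^{\HH}B_\bw\bc>0$ then rescaling gives $\bc^{\HH}A_\bw\bc\ge d_2(\bw)\bc^{\HH}B_\bw\bc$; if $\bc^{\HH}B_\bw\bc=0$ then $B_\bw\succeq0$ forces $B_\bw\bc=0$, and $A_\bw\succeq 0$ yields $\bc^{\HH}A_\bw\bc\ge 0=d_2(\bw)\bc^{\HH}B_\bw\bc$. Either way $\bc^{\HH}H_\bw\bc\ge 0$.

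\textbf{Part (iii).} Using the thin QR factorizations $W^{1/2}\Psi=Q_pR_p$ and $W^{1/2}\Phi=Q_qR_q$, and the fact that diagonal $F$ and $W$ commute, I would rewrite the objective of \eqref{eq:rat-d-compt} as $\|-Q_p(R_p\ba)+FQ_q(R_q\bb)\|_2^2$ with the constraint $\|R_q\bb\|_2=1$. Setting $\bu=R_p\ba\in\bbC^{\wtd n_1}$ (which ranges freely since $R_p$ has full row rank) and $\bv=R_q\bb\in\bbC^{\wtd n_2}$, the objective becomes
\[
\|Q_p\bu-FQ_q\bv\|_2^2=\|Q_p^{\HH}FQ_q\bv-\bu\|_2^2+\|(I-Q_pQ_p^{\HH})FQ_q\bv\|_2^2
\]
by the Pythagorean decomposition along $\mathrm{range}(Q_p)\oplus\mathrm{range}(Q_p)^{\perp}$. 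The first term is eliminated by the optimal $\bu^{\star}=S_{qp}^{\HH}\bv$, and the remainder equals $\bv^{\HH}(S_F-S_{qp}S_{qp}^{\HH})\bv=\bv^{\HH}S(\bw)\bv$. Minimizing over $\|\bv\|_2=1$ then yields $d_2(\bw)=\lambda_{\min}(S(\bw))$. For the singular-value interpretations, I would use $I-Q_pQ_p^{\HH}=Q_p^{\perp}(Q_p^{\perp})^{\HH}$ together with unitary invariance of the singular values, which gives $\sqrt{d_2(\bw)}=\sigma_{\min}((I-Q_pQ_p^{\HH})FQ_q)=\sigma_{\min}((Q_p^{\perp})^{\HH}FQ_q)$.

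\textbf{Main obstacle.} The delicate step is part (i): $B_\bw$ is singular (its $\ba$-block is exactly zero), so the generalized eigenproblem $(A_\bw,B_\bw)$ is a singular pencil and a direct Courant--Fischer argument does not apply. I would lean on \cite[Theorem 2.1]{lilb:2013} (or alternatively, justify the reduction directly via the projection argument from (iii), which already reveals that the $\ba$-block is pinned down by $\bb$ up to a kernel of $\Psi$ and so the effective problem is a well-posed Hermitian eigenproblem of size $\wtd n_2$).
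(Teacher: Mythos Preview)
Your proposal is correct and follows the paper's line closely. Part (i) is handled identically (Lagrangian stationarity plus the cited \cite{lilb:2013} to deal with the singular $B_\bw$), and your direct case-split for (ii) is the contrapositive of the paper's contradiction argument. For (iii) there is a minor stylistic difference worth noting: the paper starts from the eigenvalue equation \eqref{eq:dual_GEP}, substitutes the QR factorizations, and uses the full row rank of $R_p,R_q$ to peel off the $\what\ba$-block and arrive at $(S_F-S_{qp}S_{qp}^{\HH})\what\bb=d_2(\bw)\what\bb$; you instead go back to the optimization \eqref{eq:rat-d-compt}, substitute the same QR factorizations, and eliminate $\bu=R_p\ba$ via the Pythagorean split along $\mathrm{range}(Q_p)$. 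Both routes rest on the same orthogonal-projection identity $S_F-S_{qp}S_{qp}^{\HH}=Q_q^{\HH}F^{\HH}(I-Q_pQ_p^{\HH})FQ_q$ and the surjectivity of $R_p,R_q$, so they are interchangeable; your version has the slight advantage of not presupposing (i), while the paper's version makes the eigenvector relation $\what\ba=S_{qp}^{\HH}\what\bb$ drop out of the first block equation rather than from an optimality argument.
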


\begin{proof}
The result of (i) can be obtained from, e.g., 
\cite[Theorem 8.7.1]{govl:2013} or \cite{lilb:2013}; (ii) can be seen from optimality. Indeed, if there is $\wtd \bc$ so that $ \wtd \bc ^{\HH}H_{\bw}\wtd \bc<0$, by the positive semi-definiteness of $A_{\bw}$ and $B_{\bw}$, it holds {that} $\wtd\bc ^{\HH}B_{\bw}\wtd \bc>0$. Normalize $\wtd \bc$ so that $\wtd\bc ^{\HH}B_{\bw}\wtd \bc=1$ to conclude $\wtd\bc ^{\HH}A_{\bw}\wtd \bc<d_2(\bw)$, a contradiction with the fact that $d_2(\bw)$ achieves the minimum.

For (iii), let 
\begin{equation}\nonumber
\what \ba = R_p\ba(\bw),~\what \bb =R_q\bb(\bw)
\end{equation}
and write \eqref{eq:dual_GEP}  as 
$$
R_p^{\HH}(\what \ba-S_{qp}^{\HH}\what \bb) =0,~R_q^{\HH}(-S_{qp}\what \ba+S_{F}\what \bb-d_2(\bw) \what\bb)=0,~~ \|\what \bb\|_2=1.
$$ 
As $R_p$ and $R_q$ are both of full row rank, we have  
$$
(S_{F}-S_{qp}S_{qp}^{\HH})\what\bb=d_2(\bw) \what \bb,~~\what \ba=S_{qp}^{\HH}\what \bb,~~\|\what \bb\|_2=1.
$$
As $$S_{F}-S_{qp}S_{qp}^{\HH}=Q_q^{\HH}F^{\HH}(I_{m} -Q_pQ_p^{\HH})F Q_q,$$ the 
 positive semi-definiteness of $S_{F}-S_{qp}S_{qp}^{\HH}$  follows. Furthermore, as  
 $$
 S_{F}-S_{qp}S_{qp}^{\HH}=Q_q^{\HH}F^{\HH}(I_{m} -Q_pQ_p^{\HH})FQ_q={Q_q^{\HH}F^{\HH}(I_{m} -Q_pQ_p^{\HH})^2FQ_q=Q_q^{\HH}F^{\HH} Q_p^{\perp}(Q_p^{\perp})^{\HH}F Q_q,}
 $$
 $\sqrt{d_2(\bw)}$ is also the smallest singular value of both $(Q_p^{\perp})^{\HH}FQ_q$ and  {$(I_{m} -Q_pQ_p^{\HH})FQ_q$,} and $\what \bb$ is the corresponding right singular vector. 
 \end{proof}

\begin{remark}\label{rmk:posw}
{Note that the normalized condition $\bc(\bw)^{\HH}B_{\bw}\bc(\bw) =1$ in \eqref{eq:dual_GEP} imposes an additional requirement on the eigenvector $\bc(\bw)$ of $(A_{\bw},B_{\bw})$. We remark that, if $q \not \equiv 0$,  $\bc(\bw)^{\HH}B_{\bw}\bc(\bw) $ can always be normalized to be 1 whenever there are at least $n_2+1$ positive elements in $\bw$. To see this,  if  $0=\bc(\bw)^{\HH}B_{\bw}\bc(\bw) =\sum_{j=1}^mw_j|q(x_j)|^2,$ then we have  $w_j q(x_j)=0$ for any $j\in [m]$; therefore, whenever there are at least $n_2+1$ positive elements  $w_j$, the associated nodes $x_j$ are the zeros of $q$, which then leads to $q  \equiv 0$. Due to this fact, in the following, we assume, without loss of generality, that for any $\bw^{(k)}$  obtained during an iteration, it has at least $n_2+1$ positive elements.
}
\end{remark}

\subsection{More techniques for computing  $d_2(\bw)$}\label{subsec:d2accurate}

It is noted that for computing $d_2(\bw)$, we only need the orthogonal factors $Q_p$ and $Q_q$; these two matrices  can be obtained via the Arnoldi process  \cite{brnt:2021,hoka:2020,zhsl:2023}   without involving explicitly the Vandermonde matrix $V_{\bx,n_1}=[\be,X\be,\dots,X^{n_1}\be]$ with $X={\rm diag}(\bx)$. In particular, noticing  
$$
\sqrt{W}V_{\bx,n_1}=[\sqrt{\bw},X\sqrt{\bw},\dots,X^{n_1}\sqrt{\bw}],
$$
we know that $Q_p$ {with $Q_p\be_1=\sqrt{\bw}/\|\sqrt{\bw}\|_2$} is the basis for the Krylov subspace ${\cal K}_{n_1+1}(\sqrt{\bw}, X)$ (see \cite[Theorem 2.1]{zhsl:2023} and also \cite{hoka:2020}). The same applies to $\sqrt{W}V_{\bx,n_2}$ to yield $Q_q$ {with $Q_q\be_1=\sqrt{\bw}/\|\sqrt{\bw}\|_2$}.

In practice, we recommend to compute $\sqrt{d_2(\bw^{(k)})}$ as the smallest singular  of $(Q_p^{\perp})^{\HH}FQ_q$ for small- to medium-size $m$.  One the other hand, calling of the conventional SVD is backward stable; this implies that the relative accuracy of the numerical value $\sqrt{\hat{d_2}(\bw)}$ of $\sqrt{d_2(\bw)}$ is dependent on the condition number $\kappa_2((Q_p^{\perp})^{\HH}FQ_q)$ of  $(Q_p^{\perp})^{\HH}FQ_q$ and  satisfies (see \cite[Equation (3)]{dges:1999})
$$
\frac{\left|\sqrt{\hat{d_2}(\bw)}-\sqrt{{d_2}(\bw)}\right|}{ \sqrt{{d_2}(\bw)}}\le  O( \tu) \cdot \kappa_2((Q_p^{\perp})^{\HH}FQ_q),
$$  
where $\tu$ is the   machine precision. Fortunately, as both $(Q_p^{\perp})^{\HH}$ and $Q_q$ are well conditioned and $F$ is diagonal, the technique of \cite{dges:1999} and   Jacobi's method \cite{demm:1992} can be used so that {the} computed singular value $\sqrt{\hat{d_2}(\bw)}$ is guaranteed to have high relative accuracy even when $(Q_p^{\perp})^{\HH}FQ_q$ is ill-conditioned. {For large $m$, we suggest to compute $\sqrt{d_2(\bw^{(k)})}$ as the smallest singular value  of $(I_{m} -Q_pQ_p^{\HH})FQ_q$.}

Inspired by the technique of Vandermonde with Arnoldi \cite{brnt:2021}, another noticeable point that can avoid the ill-conditioning {of the} Vandermonde matrices (i.e., avoid explicitly involving  the triangular matrices $R_p$ and $R_q$) during  {solving of the dual problem} \eqref{eq:dualPX} is to use the vectors $\what \ba$ and $\what \bb$, instead of $\ba$ and $\bb$.  Specifically,   we can compute  vectors
\begin{subequations}\label{eq:pqvector}
\begin{align}\label{eq:pqvectora}
\bp &=V_{\bx,n_1}\ba(\bw)=W^{-\frac12}Q_pR_p\ba(\bw)=W^{-\frac12}Q_p\what\ba,\\
\bq&=V_{\bx,n_2}\bb(\bw) =W^{-\frac12}Q_qR_q\bb(\bw)=W^{-\frac12}Q_q\what\bb,\label{eq:pqvectorb}
\end{align}
\end{subequations} 
where $\what \bb$ is the right singular vector of both $(Q_p^{\perp})^{\HH}FQ_q$ and $(I_{m} -Q_pQ_p^{\HH})FQ_q$ associated with the smallest singular value, and   $\what \ba=S_{qp}^{\HH}\what \bb$. The same technique has been applied for dealing {with} the linear minimax approximation from a similar dual {formulation} \cite{zhsl:2023}.

\subsection{Evaluation  at new nodes}
{According to \eqref{eq:pqvector}, at the maximizer $\bw^*\in {\cal S}$ of the dual problem \eqref{eq:dualPX},} suppose that   the corresponding vectors 
\begin{align*}
p^*(\bx)&=\bp^* =V_{\bx,n_1}\ba(\bw^*)=(W^*)^{-\frac12}Q_p^*(R_p^*\ba(\bw^*))=(W^*)^{-\frac12}Q_p^*\what\ba^*,\\
q^*(\bx)&=\bq^*=V_{\bx,n_2}\bb(\bw^*)=(W^*)^{-\frac12}Q_q^*(R_q^*\bb(\bw^*))= (W^*)^{-\frac12}Q_q^*\what\bb^*,
\end{align*} 
of the numerator and denominator of $\xi^*(x)=p^*(x)/q^*(x)$ at the sample vector $\bx$ have been computed; the technique introduced in \cite{brnt:2021} then facilitates computing the values of $\{\xi^*(y_j)\}_{j=1}^{\wtd m}$ at new nodes $\{y_j\}_{j=1}^{\wtd m}$. Specifically, let $\by=[y_1,\dots,y_{{\wtd m}}]^{\T}\in \bbC^{\wtd m}$; then the Arnoldi process for generating the orthogonal matrices $Q_p^*$ and $Q_q^*$ can analogously be used with the new sample vector $\by$ to get the associated  (not necessarily orthonormal) basis matrices $L_p$ and $L_q$ (see \cite[Theorem 2.1, Figure 2.1,  Equations (2.8) and (2.9)]{zhsl:2023}) for $V_{\by,n_1}$ and $V_{\by,n_2}$, respectively; it holds that {$L_p\be_1 = \be\in \bbR^{n_1+1}$, $L_q\be_1 = \be\in \bbR^{n_2+1}$,  and 
$ 
V_{\by,n_1}=L_p R_p^*,~~V_{\by,n_2}=L_q R_q^* ;
$ 
 consequently, 
 \begin{subequations}\label{eq:new_pqvector}
 \begin{align}\label{eq:new_pqvectora}
p^*(\by)&=V_{\by,n_1}\ba(\bw^*)= L_p^*(R_p^*\ba(\bw^*))= L_p^*\what\ba^*,\\\label{eq:new_pqvectorb}
q^*(\by)&=V_{\by,n_2}\bb(\bw^*)= L_q^*(R_q^*\bb(\bw^*))=  L_q^*\what\bb^*,
\end{align} 
\end{subequations}
  and $\xi^*(\by)=(p^*(\by))./q^*(\by)=(L_p\what \ba^*)./(L_q\what \bb^*)$.
  }

\section{Strong duality  and Ruttan's sufficient condition}\label{sec_strongduality} 
In the literature, necessary/sufficient optimality conditions of a local best rational  approximation of \eqref{eq:bestf} and   uniqueness have been established in e.g., \cite{elli:1978,gutk:1983,rutt:1985,sava:1977,this:1993,wulb:1980}. In particular, \cite{rutt:1985} contributes both a necessary, the Kolmogorov local condition (dual form), for a local best   rational approximation, and a sufficient condition for a global best of \eqref{eq:bestf}. In this section, we shall see that these two optimality conditions are closely related with   strong duality \eqref{eq:strongdual}.

To state Ruttan's  {sufficient optimality} condition, for an irreducible $\xi=p/q\in\scrR_{(n_1,n_2)}$ and  a given $x\in \bbC$, we introduce a matrix defined by
\begin{equation} \label{eq:Rutten-rank1}
H(x)=\left[\begin{array}{cc}H_1(x) & H_3(x)  \\H_3^{\HH}(x)  &H_2(x) \end{array}\right]\in \bbC^{(n_1+n_2+2)\times (n_1+n_2+2)}
\end{equation}
where (with $e(\xi)$ given in \eqref{eq:exi})
\begin{subequations}\label{eq:Hk}
\begin{align}\label{eq:H1}
[H_1(x)]_{\ell,k}&=\overline{\psi_{\ell-1}(x)}\psi_{k-1}(x)\in \bbC,~~1\le \ell,k\le n_1+1,\\\label{eq:H2}
[H_2(x)]_{\ell,k}&=\left(|f(x)|^2- (e(\xi))^2\right)\overline{\phi_{\ell-1}(x)}\phi_{k-1}(x)\in \bbC,~~1\le \ell,k\le n_2+1,\\\label{eq:H3}
[H_3(x)]_{\ell,k}&=-f(x)\overline{\psi_{\ell-1}(x)}\phi_{k-1}(x)\in \bbC,~~1\le \ell\le n_1+1,~~1\le k\le n_2+1.
\end{align}
\end{subequations}
Then Ruttan's sufficient condition (\cite[{Theorem} 2.1]{rutt:1985};  see also \cite[Theorem 2]{isth:1993} and \cite[Theorem 3]{this:1993}) for a global best approximant $\xi^*=p^*/q^*$ can be summarized in the following theorem.

\begin{theorem}{\rm (\cite[{Theorem} 2.1]{rutt:1985})}\label{thm:RuttanSufOpt}
An irreducible rational {approximant}  $\hat\xi=\hat p /\hat q \in  \scrR_{(n_1,n_2)}$ is a best approximation for \eqref{eq:bestf} if there exist points $\{z_j\}_{j=1}^t\subseteq {\cal X}_e(\hat\xi),~t\ge 1$ and positive real constants $\{\varpi_j\}_{j=1}^t$ such that $\sum_{j=1}^t \varpi_j=1$ and   the Hermitian matrix 
\begin{equation}\label{eq:RuttanHc}
H =\sum_{j=1}^t \varpi_j H(z_j)\succeq 0.
\end{equation}
In that case the points $\{z_j\}_{j=1}^t$ and $\{\varpi_j\}_{j=1}^t$ satisfy the local (dual form) Kolmogorov condition 
\begin{equation}\nonumber
\sum_{j=1}^t \varpi_j \overline{(f(z_j)-\hat\xi(z_j))  \hat q(z_j)} p(z_j)/\hat q (z_j)=0,\quad \forall p\in \bbP_{N},
\end{equation}
where $N=n_1+n_2-\upsilon(\hat p,\hat q)$ and $\upsilon(\hat p,\hat q)$ is the defect of $\hat\xi$ given by \eqref{eq:defect}.
\end{theorem}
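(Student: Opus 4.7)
The plan hinges on a single algebraic identity. For any coefficient vector $\mathbf{c} = [\ba^{\T},\bb^{\T}]^{\T}\in\bbC^{n_1+n_2+2}$ corresponding to $p(x) = \sum_{k} a_{k+1}\psi_k(x)$ and $q(x) = \sum_k b_{k+1}\phi_k(x)$, I would first check by direct multiplication of the three blocks in \eqref{eq:Rutten-rank1}--\eqref{eq:Hk} that
\begin{equation}
\mathbf{c}^{\HH} H(x)\,\mathbf{c} \;=\; \bigl|p(x)-f(x)q(x)\bigr|^{2}\;-\;(e(\hat\xi))^{2}\,|q(x)|^{2}. \nonumber
\end{equation}
This identity is the engine of the proof: the $H_1$ block produces $|p(x)|^2$, the $H_2$ block produces $(|f(x)|^2-(e(\hat\xi))^2)|q(x)|^2$, and the off-diagonal $H_3,H_3^{\HH}$ contribute the cross term $-2\operatorname{Re}\!\bigl(\overline{p(x)}\,f(x)q(x)\bigr)$.

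With the identity established, the positive semi-definiteness of $H = \sum_{j}\varpi_j H(z_j)\succeq 0$ translates at once into the weighted inequality
\begin{equation}
\sum_{j=1}^{t}\varpi_{j}\bigl|p(z_{j})-f(z_{j})q(z_{j})\bigr|^{2} \;\ge\; (e(\hat\xi))^{2}\sum_{j=1}^{t}\varpi_{j}\,|q(z_{j})|^{2} \nonumber
\end{equation}
valid for every polynomial pair $(p,q)$ with $p\in\bbP_{n_1}$, $q\in\bbP_{n_2}$. To prove the optimality claim, I would argue by contradiction: assume some $\tilde\xi = \tilde p/\tilde q\in\scrR_{(n_1,n_2)}$ satisfies $e(\tilde\xi) < e(\hat\xi)$. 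Then $|f(z_j)-\tilde\xi(z_j)| \le e(\tilde\xi) < e(\hat\xi)<\infty$, so $\tilde q(z_j)\ne 0$ at every $z_j$ and the left-hand side equals $\sum_j\varpi_j|\tilde q(z_j)|^{2}|f(z_j)-\tilde\xi(z_j)|^{2}$, which is \emph{strictly} smaller than $(e(\hat\xi))^{2}\sum_j\varpi_j|\tilde q(z_j)|^{2}$ since $\varpi_j>0$ and at least one $\tilde q(z_j)\ne 0$. This contradicts the displayed inequality, forcing $\hat\xi$ to be a best approximant.

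For the Kolmogorov identity, I would plug $\hat{\mathbf{c}} = [\hat\ba^{\T},\hat\bb^{\T}]^{\T}$ (the coefficients of $\hat p/\hat q$) into the main identity and use $z_j\in\cX_{e}(\hat\xi)$ to obtain
\begin{equation}
\hat{\mathbf{c}}^{\HH} H\,\hat{\mathbf{c}} \;=\; \sum_{j=1}^{t}\varpi_{j}\,|\hat q(z_j)|^{2}\!\left(|f(z_j)-\hat\xi(z_j)|^{2}-(e(\hat\xi))^{2}\right) \;=\; 0. \nonumber
\end{equation}
Since $H\succeq 0$, the standard fact that a PSD matrix kills any vector on which its quadratic form vanishes gives $H\hat{\mathbf{c}} = \bzs$. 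Taking inner products $\mathbf{c}^{\HH} H \hat{\mathbf{c}} = 0$ with $\mathbf{c}$ representing $(p,0)$ for $p\in\bbP_{n_1}$ produces one family of relations, and with $\mathbf{c}$ representing $(0,q)$ for $q\in\bbP_{n_2}$ another; a short calculation shows each relation is of the form $\sum_j\varpi_j\overline{(f(z_j)-\hat\xi(z_j))\hat q(z_j)}\,r(z_j)/\hat q(z_j)=0$ with $r$ ranging respectively over $\hat q\cdot\bbP_{n_1}$ and $\hat p\cdot\bbP_{n_2}$. Combining these spans and using irreducibility of $\hat p/\hat q$ gives $r$ ranging over $\bbP_{N}$ with $N=n_1+n_2-\upsilon(\hat p,\hat q)$.

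The conceptual core (the sufficient condition) is clean once the quadratic-form identity is in place. The part I expect to require the most care is the last step: bookkeeping that the two test-vector families really span $\bbP_{N}$, since the effective dimension drops by exactly $\upsilon(\hat p,\hat q)$ due to the common-zero structure of $\hat p$ and $\hat q$; this is where irreducibility and the definition of the defect enter in an essential way.
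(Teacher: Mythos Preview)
The paper does not give its own proof of this statement: Theorem~\ref{thm:RuttanSufOpt} is quoted from \cite[Theorem~2.1]{rutt:1985} and used as a black box, so there is nothing in the paper to compare your argument against.

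That said, your approach is sound and is essentially the standard one. The quadratic-form identity $\mathbf{c}^{\HH}H(x)\mathbf{c}=|p(x)-f(x)q(x)|^{2}-(e(\hat\xi))^{2}|q(x)|^{2}$ is correct, and the contradiction argument for optimality goes through once you pass to an irreducible representative of the competing $\tilde\xi$ (so that $\tilde q(z_j)\ne 0$ is guaranteed). For the Kolmogorov part, your plan is exactly right: $H\succeq 0$ and $\hat{\mathbf c}^{\HH}H\hat{\mathbf c}=0$ force $H\hat{\mathbf c}=0$, and testing against $(p,0)$ and $(0,q)$ yields the identity for $s\in\hat q\,\bbP_{n_1}+\hat p\,\bbP_{n_2}$. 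The point you flag as delicate---that this sum equals $\bbP_{N}$ with $N=n_1+n_2-\upsilon(\hat p,\hat q)$---follows from a dimension count: the linear map $(a,b)\mapsto a\hat q+b\hat p$ on $\bbP_{n_1}\times\bbP_{n_2}$ has kernel $\{(\hat p\,c,-\hat q\,c):c\in\bbP_{\upsilon}\}$ by coprimality of $\hat p,\hat q$, hence image of dimension $(n_1+n_2+2)-(\upsilon+1)=N+1=\dim\bbP_{N}$, and the image is contained in $\bbP_{N}$ by degree considerations. With that lemma in hand your outline is complete.
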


 In the following,  Ruttan's sufficient condition is stated as 
\begin{equation}\label{eq:Ruttan}
\hspace{2mm}
\framebox{\parbox{13cm}{
there is an irreducible (not necessarily non-degenerate)  $\hat\xi=\hat p /\hat q \in  \scrR_{(n_1,n_2)}$, a subset $\{z_j\}_{j=1}^t\subseteq {\cal X}_e(\hat\xi),~t\ge 1$ and positive real constants $\{\varpi_j\}_{j=1}^t$ with  $\sum_{j=1}^t \varpi_j=1$ so that   
$ H =\sum_{j=1}^t \varpi_j H(z_j)\succeq 0$, {where $H(z_j)$ is defined in \eqref{eq:Rutten-rank1}.}}
}
\end{equation}

{We remark that Ruttan's sufficient condition \eqref{eq:Ruttan} alone is not very practical in checking a given $\hat\xi=\hat p /\hat q \in  \scrR_{(n_1,n_2)}$ to be minimax approximant. Indeed, one first needs to compute the set of reference points ${\cal X}_e(\hat\xi)$, and then choose a suitable subset  $\{z_j\}_{j=1}^t\subseteq{\cal X}_e(\hat\xi)$ together with the corresponding $\{\varpi_j\}_{j=1}^t$ so that $ H =\sum_{j=1}^t \varpi_j H(z_j)\succeq 0.$ Interestingly, we shall see that Ruttan's sufficient condition \eqref{eq:Ruttan} perfectly fits in with the dual problem \eqref{eq:dualPX}:  a rational approximant will be computed efficiently from \eqref{eq:dualPX} for which Ruttan's sufficient condition \eqref{eq:Ruttan} can be checked easily.}

\begin{theorem}\label{thm:strongdualityRuttan}
Let $\bw^*\in{\cal S}$ be the maximizer of the convex dual {problem} \eqref{eq:dualPX}. 
Ruttan's sufficient condition \eqref{eq:Ruttan} is satisfied  if and only if there is an irreducible rational {approximant}  $\hat \xi =\hat  p /\hat  q \in  \scrR_{(n_1,n_2)}$ {that} achieves the minimum $d_2(\bw^*)$  of \eqref{eq:rat-d-compt} with $\bw=\bw^*$ and 
\begin{equation}\label{eq:strongdualityRuttan}
\sqrt{d_2(\bw^*)}=e(\hat  \xi).
\end{equation}
\end{theorem}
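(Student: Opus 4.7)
My approach hinges on a single algebraic identity that links Ruttan's PSD certificate directly to the pencil $(A_\bw,B_\bw)$. Writing $\psi(x)=[\psi_0(x),\dots,\psi_{n_1}(x)]$ and $\phi(x)=[\phi_0(x),\dots,\phi_{n_2}(x)]$ as row vectors, I would first verify from \eqref{eq:dual_GEPA}--\eqref{eq:dual_GEPB} that for every $\lambda\ge 0$,
\begin{equation}\nonumber
A_{\bw}-\lambda B_{\bw}=\sum_{j=1}^m w_j\Bigl([-\psi(x_j),f_j\phi(x_j)]^{\HH}[-\psi(x_j),f_j\phi(x_j)]-\lambda\,[0,\phi(x_j)]^{\HH}[0,\phi(x_j)]\Bigr),
\end{equation}
and then check from \eqref{eq:Hk} that the $j$-th summand equals $H(x_j)$ of \eqref{eq:Rutten-rank1} precisely when $\lambda=(e(\hat\xi))^2$. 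Hence Ruttan's certificate $\sum_k\varpi_k H(z_k)\succeq 0$ is, up to embedding the weights $\{\varpi_k\}$ into the full simplex, exactly the positive semi-definiteness of the shifted pencil evaluated at $\lambda=(e(\hat\xi))^2$, and both implications reduce to comparing this shift against Proposition \ref{prop:dual_GEP}.

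For the ``if'' direction, suppose the irreducible $\hat\xi=\hat p/\hat q$ attains $d_2(\bw^*)=(e(\hat\xi))^2$. I would take $\{z_k\}$ and $\{\varpi_k\}$ to be the support and corresponding positive entries of $\bw^*$, so $\sum_k\varpi_k=1$ automatically. Proposition \ref{prop:dual_GEP}(ii) at $\bw^*$ gives $A_{\bw^*}-d_2(\bw^*)B_{\bw^*}\succeq 0$, which by the identity reads $\sum_k\varpi_k H(z_k)\succeq 0$. The remaining task is to show $z_k\in {\cal X}_e(\hat\xi)$, which I would deduce by a complementary-slackness argument: since $e(\hat\xi)<\infty$ forces $\hat q(x_j)\ne 0$ on ${\cal X}$ and the minimizer $(\hat p,\hat q)$ may be normalized so that $\sum_j w_j^*|\hat q(x_j)|^2=1$, the chain
\begin{equation}\nonumber
d_2(\bw^*)=\sum_{j=1}^m w_j^*|\hat q(x_j)|^2\,|f_j-\hat\xi(x_j)|^2 \le (e(\hat\xi))^2\sum_{j=1}^m w_j^*|\hat q(x_j)|^2 = d_2(\bw^*)
\end{equation}
forces $|f_j-\hat\xi(x_j)|=e(\hat\xi)$ at every $x_j$ with $w_j^*>0$.

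For the ``only if'' direction, I would embed $\{\varpi_k\}$ into $\bw^\sharp\in{\cal S}$ supported on $\{z_k\}$. The identity recasts Ruttan's inequality as $A_{\bw^\sharp}-(e(\hat\xi))^2 B_{\bw^\sharp}\succeq 0$, and Proposition \ref{prop:dual_GEP}(i) yields $d_2(\bw^\sharp)\ge (e(\hat\xi))^2$. For the reverse bound I would rescale $(\hat p,\hat q)$ to be feasible in \eqref{eq:rat-d-compt} at $\bw^\sharp$ and repeat the \eqref{eq:weakduality}-style computation to get $d_2(\bw^\sharp)\le(e(\hat\xi))^2$, hence equality. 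Weak duality (Theorem \ref{thm:q-dual}) then gives $d_2(\bw^*)\le\eta_\infty^2\le(e(\hat\xi))^2$, while optimality of $\bw^*$ gives $d_2(\bw^*)\ge d_2(\bw^\sharp)$; the two together force $d_2(\bw^*)=(e(\hat\xi))^2$. Applying the same scaling/upper-bound calculation at $\bw^*$ then shows $\hat\xi$ attains the minimum of \eqref{eq:rat-d-compt}.

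The main obstacle I anticipate is in the ``only if'' direction: $B_{\bw^\sharp}$ may well be rank-deficient when the support $\{z_k\}$ is small, so a raw Rayleigh-quotient step is unavailable and I would have to invoke Proposition \ref{prop:dual_GEP}(i)'s normalized (constrained) formulation of the generalized eigenvalue problem to pass from PSD-ness to the lower bound on $d_2(\bw^\sharp)$. Throughout, the standing assumption (Remark \ref{rmk:posw}) that $\bw^*$ carries at least $n_2+1$ positive entries is what guarantees $\sum_j w_j^*|\hat q(x_j)|^2>0$, making the scaling of $(\hat p,\hat q)$ legitimate in both directions.
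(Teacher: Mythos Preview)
Your proposal is correct and follows essentially the same approach as the paper: both directions rest on the identity $A_{\bw}-(e(\hat\xi))^2 B_{\bw}=\sum_j w_j H(x_j)$ (the paper's \eqref{eq:HwRuttan}), Proposition~\ref{prop:dual_GEP}(ii), and the complementary-slackness chain. The one organizational difference is in the necessity direction: the paper invokes Theorem~\ref{thm:RuttanSufOpt} up front to identify $e(\hat\xi)=\eta_\infty$ and then verifies directly that the coefficient vector $\hat\bc$ annihilates $A_{\bw^\sharp}-(\eta_\infty)^2 B_{\bw^\sharp}$, whereas you sandwich $d_2(\bw^\sharp)$ between two copies of $(e(\hat\xi))^2$ and then transfer the equality to the given maximizer $\bw^*$ via weak duality and optimality---a slightly more self-contained but equivalent route.
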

\begin{proof}
For the sufficiency, suppose the irreducible rational {approximant} $(\hat  p,\hat  q)$ achieves the minimum $d_2(\bw^*)$  of \eqref{eq:rat-d-compt} with $\bw=\bw^*$ and $d_2(\bw^*)=(e(\hat  \xi))^2$. {Performing}  the same argument as that for \eqref{eq:weakduality}, $d_2(\bw^*)=(e(\hat  \xi))^2$ leads to the  complementary condition: $w_j^*=0$ for any $x_j\not \in {\cal X}_e(\hat \xi)$  given by \eqref{eq:extremalset}. 
 By the optimality in Proposition \ref{prop:dual_GEP} (ii), we also know that the Hermitian matrix $H_{\bw^*}= A_{\bw^*} -d_2(\bw^*) B_{\bw^*}$ is  positive semi-definite. Notice from \eqref{eq:dual_GEPA} and \eqref{eq:dual_GEPB} that 
\begin{align*}
 H_{\bw^*}&= A_{\bw^*} -d_2(\bw^*) B_{\bw^*}= \left[\begin{array}{cc}\Psi^{\HH}W\Psi & -\Psi^{\HH}FW\Phi \\-\Phi^{\HH} WF^{\HH}\Psi & \Phi^{\HH}F^{\HH}WF\Phi-d_2(\bw^*)\Phi^{\HH}W\Phi\end{array}\right],\end{align*}
 and by the definitions of $H_k(x)$ for $k=1,2,3$ in \eqref{eq:Hk},  it holds that $$[\Psi^{\HH}\be_j\be_j^{\T}\Psi]_{\ell,k}=\be_{\ell}^{\T}\Psi^{\HH}\be_j\be_j^{\T}\Psi\be_k=\overline{\psi_{\ell-1}(x_j)}\psi_{k-1}(x_j),$$ and thus
 \begin{subequations}\label{eq:HwRuttan}
 \begin{align}\label{eq:HwRuttan1}
 &\Psi^{\HH}W\Psi=\sum_{j=1}^m w_j^* \Psi^{\HH}\be_j\be_j^{\T}\Psi=\sum_{j=1}^m w_j^*  H_1(x_j),~ -\Psi^{\HH}FW\Phi=\sum_{j=1}^m w_j^*  H_3(x_j),\\\label{eq:HwRuttan2}
 &\Phi^{\HH}F^{\HH}WF\Phi-d_2(\bw^*)\Phi^{\HH}W\Phi=\sum_{j=1}^m w_j^*  H_2(x_j);
 \end{align}
 \end{subequations}
 consequently, according to the definition of the set of reference points ${\cal X}_e(\hat \xi)$ with  $|{\cal X}_e(\hat \xi)|\ge 1$, it is true that
 $$0 \preceq H_{\bw^*}=\sum_{j=1}^m w_j^* H(x_j)=\sum_{j:x_j\in {\cal X}_e(\hat \xi)}  w_j^* H(x_j),$$
 where we have used $w_j^*=0$ for any $x_j\not \in {\cal X}_e(\hat \xi)$. Thus  Ruttan's sufficient condition \eqref{eq:Ruttan} is fulfilled. Furthermore, in this case, $\hat \xi$ is the global best approximation of \eqref{eq:bestf}.
 
For  the necessity, suppose that Ruttan's sufficient condition \eqref{eq:Ruttan} is satisfied for an irreducible rational {approximant}  $\hat \xi =\hat  p /\hat  q \in  \scrR_{(n_1,n_2)}$ and $\{z_j\}_{j=1}^t\subseteq {\cal X}_e(\hat\xi)$ and $\{\varpi_j\}_{j=1}^t$. Theorem \ref{thm:RuttanSufOpt} implies that $\hat \xi$ is the global best approximation of \eqref{eq:bestf} with $\eta_\infty=e(\hat \xi)$.
Without loss of generality, assume $z_j=x_j$ for $1\le j\le t$; define a vector $\bw^*\in {\cal S}$ with $ w_j^*=\varpi_j$ if $1\le j\le t$ and $w^*_j=0$ if $m\ge j\ge t+1$. Thus, using the relation \eqref{eq:HwRuttan}, \eqref{eq:RuttanHc} gives
$
A_{\bw^*} -(\eta_\infty)^2 B_{\bw^*}\succeq 0.
$
Moreover, denote by $\hat p(x) = [\psi_0(x),\dots,\psi_{n_1}(x)]\hat \ba$ and $\hat q(x)=[\phi_0(x),\dots,\phi_{n_2}(x)]\hat \bb$; it is true that $0\ne \hat \bc=\left[\begin{array}{c}\hat \ba \\\hat \bb\end{array}\right] \in \bbC^{n_1+n_2+2}$ satisfies 
\begin{align*}
&\hat \bc^{\HH}\left(A_{\bw^*} -(\eta_\infty)^2 B_{\bw^*}\right)\hat \bc\\
=&\sum_{j=1}^tw_j^* \left(|\hat p(x_j)|^2+|f(x_j)\hat q(x_j)|^2-2{\rm Re}(f(x_j)\overline{\hat p(x_j)}\hat q(x_j))-|\eta_\infty\hat q(x_j)|^2\right)\\
=&\sum_{j=1}^tw_j^* |\hat q(x_j)|^2\left(|f(x_j)-\hat \xi(x_j)|^2-\eta_\infty^2\right)=0,
\end{align*} 
where the last equality follows due to $x_j\in  {\cal X}_e(\hat\xi)$. Thus,  $(\eta_\infty)^2$ is the smallest eigenvalue of the pencil $(A_{\bw^*},  B_{\bw^*})$ and $\hat \bc$ is the associated eigenvector. Note $\hat  {\bc^{\HH}}B_{\bw^*}\hat  {\bc}=\sum_{j=1}^t w_j^*|\hat q(x_j)|^2\ne 0$. Normalize $(\hat p,\hat q)$ using $\bw^*\in {\cal S}$ so that $\hat q$ is feasible for the minimization \eqref{eq:rat-d-compt} (i.e., normalize $\hat \bc$ to have $\hat \bc^{\HH}B_{\bw^*}\hat \bc=1$); thus $(\hat p,\hat q)$, by  Proposition \ref{prop:dual_GEP}, achieves the minimum $d_2(\bw^*)$  of \eqref{eq:rat-d-compt} with $\bw=\bw^*$.  As $d_2(\bw^*)$ is also the   smallest eigenvalue of the pencil $(A_{\bw^*},  B_{\bw^*})$, we have \eqref{eq:strongdualityRuttan}. The proof is complete.
\end{proof}

Based on Theorem \ref{thm:strongdualityRuttan}, we have the equivalence between Ruttan's sufficient condition \eqref{eq:Ruttan} and   strong duality \eqref{eq:strongdual}.
\begin{theorem}\label{thm:strongdualityeqvRuttan}
Suppose \eqref{eq:bestf} has a solution. Then Ruttan's sufficient condition \eqref{eq:Ruttan} is satisfied if and only if   strong duality \eqref{eq:strongdual} holds. Thus, the global best  rational approximant $\xi^*\in   \scrR_{(n_1,n_2)}$ of \eqref{eq:bestf} can be computed from the convex dual {problem} \eqref{eq:dualPX} whenever Ruttan's sufficient condition \eqref{eq:Ruttan} is satisfied.
\end{theorem}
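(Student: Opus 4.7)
The plan is to obtain Theorem \ref{thm:strongdualityeqvRuttan} as a direct corollary of Theorem \ref{thm:strongdualityRuttan}, by matching its intermediate condition ``there is an irreducible $\hat\xi=\hat p/\hat q$ achieving the minimum $d_2(\bw^*)$ with $\sqrt{d_2(\bw^*)}=e(\hat\xi)$'' to the strong-duality equality $d_2(\bw^*)=(\eta_\infty)^2$. The final ``moreover'' sentence will then be immediate: if Ruttan's condition holds then $d_2(\bw^*)=(\eta_\infty)^2$ and, by Theorem \ref{thm:strongdualityRuttan}, a solution to \eqref{eq:rat-d-compt} at $\bw^*$ yields a global best approximant of \eqref{eq:bestf}.

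For the direction ``Ruttan $\Rightarrow$ strong duality'', I would start from Theorem \ref{thm:strongdualityRuttan}, which produces an irreducible $\hat\xi\in\scrR_{(n_1,n_2)}$ attaining $d_2(\bw^*)$ with $\sqrt{d_2(\bw^*)}=e(\hat\xi)$. Theorem \ref{thm:RuttanSufOpt} (Ruttan's own statement) then guarantees $\hat\xi$ is a global best approximant of \eqref{eq:bestf}, so $e(\hat\xi)=\eta_\infty$, and combining the two equalities gives $d_2(\bw^*)=(\eta_\infty)^2$, i.e.\ strong duality \eqref{eq:strongdual}.

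For the direction ``strong duality $\Rightarrow$ Ruttan'', I would take a solution $\xi^*=p^*/q^*$ of \eqref{eq:bestf} (which exists by hypothesis) in its irreducible form; since $\xi^*$ achieves a finite error, $q^*$ cannot vanish on $\cal X$, so the setting of Theorem \ref{thm:q-dual} applies. Treating first the generic case $\eta_\infty>0$ (so $\bw^*\ne\bzs$), I would rescale $(p^*,q^*)\to(\tau p^*,\tau q^*)$ to enforce $\sum_j w_j^*|\tau q^*(x_j)|^2=1$ (possible because $\sum_j w_j^*|q^*(x_j)|^2>0$ for any nonzero $\bw^*\in\cal S$ with enough positive entries, cf.\ Remark \ref{rmk:posw}). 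Plugging this feasible pair into \eqref{eq:rat-d-compt} and repeating the chain of inequalities \eqref{eq:weakduality} gives
\[
d_2(\bw^*)\le\sum_{j=1}^m w_j^*|\tau q^*(x_j)|^2\,|f_j-\xi^*(x_j)|^2\le(\eta_\infty)^2,
\]
and strong duality forces equality throughout. Hence the irreducible $\xi^*$ attains the minimum $d_2(\bw^*)=(\eta_\infty)^2=(e(\xi^*))^2$, which is exactly the hypothesis of Theorem \ref{thm:strongdualityRuttan}, so Ruttan's condition \eqref{eq:Ruttan} follows. The degenerate case $\eta_\infty=0$ is handled separately: then $\xi^*$ interpolates $\Bf$, the coefficient vector $\bc^*=[(\ba^*)^\T,(\bb^*)^\T]^\T$ lies in the null space of $[-\Psi,F\Phi]$ (Remark \ref{rk:dual}(1)), so any $\bw\in\cal S$ with sufficiently many positive entries gives $d_2(\bw)=0=(\eta_\infty)^2$ and verifies Ruttan's condition with $H_{\bw^*}\succeq 0$ trivially.

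The main obstacle will be the bookkeeping in the reverse direction: one must justify that the irreducible form of $\xi^*$ (i)~has no pole on $\cal X$, (ii)~admits a scaling making the denominator constraint active at $\bw^*$, and (iii)~therefore legitimately enters the minimization defining $d_2(\bw^*)$. Once these three points are secured, the argument is essentially an application of complementary slackness, and the ``moreover'' statement follows because the pair $(p^*,q^*)$ can then be read off from the generalized eigenvalue problem \eqref{eq:dual_GEP} at $\bw=\bw^*$ described in Proposition \ref{prop:dual_GEP}.
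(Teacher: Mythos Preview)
Your proposal is correct and follows essentially the same route as the paper's proof. The only packaging difference is in the reverse direction: the paper invokes Proposition~\ref{prop:dual_GEP}(ii) and the complementary slackness of Theorem~\ref{thm:complement}(i) directly to conclude $0\preceq H_{\bw^*}=\sum_{x_j\in{\cal X}_e(\xi^*)}w_j^*H(x_j)$, whereas you first verify that $\xi^*$ attains the minimum $d_2(\bw^*)$ via the same chain of inequalities and then appeal to Theorem~\ref{thm:strongdualityRuttan} as a black box---the underlying argument is identical. One small cleanup: your appeal to Remark~\ref{rmk:posw} for the rescaling is unnecessary, since $q^*$ has no zero on ${\cal X}$ and $\bw^*\in{\cal S}$ is nonzero, which already forces $\sum_j w_j^*|q^*(x_j)|^2>0$.
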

\begin{proof}
Let $\bw^*\in{\cal S}$ be the maximizer of the dual {problem} \eqref{eq:dualPX}. 
When Ruttan's sufficient condition  holds, then by \eqref{eq:strongdualityRuttan} of Theorem \ref{thm:strongdualityRuttan}, we have $d_2(\bw^*)=\max_{\bw\in {\cal S}}d_2(\bw)=(\eta_\infty)^2$, i.e.,   strong duality \eqref{eq:strongdual}.

Conversely, let   strong duality \eqref{eq:strongdual} hold and $\xi^*=p^*/q^*\in  \scrR_{(n_1,n_2)}$ be the irreducible solution of \eqref{eq:bestf}. Then $\eta_\infty=e(\xi^*)=\sqrt{d_2(\bw^*)}$. Using the optimality condition in Proposition \ref{prop:dual_GEP},    relations \eqref{eq:Hk} and  \eqref{eq:HwRuttan}, we have 
$$
0\preceq H_{\bw^*}=A_{\bw^*}-(e(\xi^*))^2B_{\bw^*}=\sum_{j=1}^m w_j^* H(x_j)=\sum_{j:x_j\in {\cal X}_e(\xi^*)}  w_j^* H(x_j),
$$
where, based on  the complementary condition in Theorem \ref{thm:complement}, we have used $w_j^*=0$ for any $x_j\not \in {\cal X}_e(\xi^*)$. This completes the proof.
\end{proof}

As the dual {problem} \eqref{eq:dualPX} is a convex optimization {problem}, one can expect the global solution can be computed effectively.  This means that, in theory, solving \eqref{eq:bestf} through its dual problem \eqref{eq:dualPX} can ensure  to obtain the global best rational approximation of \eqref{eq:bestf} under Ruttan's sufficient condition \eqref{eq:Ruttan}. Note that \eqref{eq:strongdualityRuttan} provides a way to check Ruttan's sufficient condition \eqref{eq:Ruttan} via solving the dual problem globally.  On the other hand, even it is claimed in \cite{isth:1993} that  ``{\it his condition turns out to be effective in most examples encountered in our numerical experiments}'' and can be a necessary condition for some special case (see e.g., \cite[Theorem 5]{this:1993}), it has been shown in \cite{isth:1993,this:1993} that Ruttan's sufficient condition \eqref{eq:Ruttan} generally is not necessary for a solution of \eqref{eq:bestf}. Indeed, this is expected as we use a convex optimization {problem}  to approximate the original non-convex problem \eqref{eq:bestf}; whenever Ruttan's sufficient condition \eqref{eq:Ruttan} is not fulfilled at the solution of \eqref{eq:bestf}, the computed  rational {approximant} from the dual \eqref{eq:dualPX} could at most be a local best approximation of \eqref{eq:bestf}. Nevertheless, our numerical experiments in section \ref{sec_Numerical} indicate that Ruttan's sufficient condition \eqref{eq:Ruttan} holds frequently in our numerical test problems.

For the uniqueness of the global best rational solution to \eqref{eq:bestf}, under Ruttan's sufficient condition \eqref{eq:Ruttan},  \cite[Theorem 4]{this:1993} shows that when the Hermitian  matrix $H$ in \eqref{eq:RuttanHc}, or equivalently,  $H_{\bw^*}$, has rank $\rank(H_{\bw^*})=n_1+n_2+1$, then the  global best solution is unique. This corresponds to the case when $d_2(\bw^*)=(e(\xi^*))^2$ is a simple eigenvalue of the matrix pencil $(A_{\bw^*}, B_{\bw^*})$. For this situation, in the next section, we shall provide further information, the gradient and the Hessian, of the dual function $d_2(\bw)$ with respect to $\bw$.  

\section{The gradient and Hessian of $d_2(\bw)$}\label{sec_gradHess} 
\begin{proposition}\label{prop:grad}
For $\bw>0$,  let  $d_2(\bw)$ be the smallest eigenvalue of the Hermitian positive semi-definite generalized eigenvalue problem \eqref{eq:dual_GEP}, and $\bc(\bw)=\left[\begin{array}{c} \ba(\bw) \\ \bb(\bw)\end{array}\right] \in \bbC^{n_1+n_2+2}$ be the associated eigenvector. Denote $\bp =[p_1,\dots,p_m]^{\T}=\Psi \ba(\bw)\in \bbC^m$ and $\bq  =[q_1,\dots,q_m]^{\T}=\Phi \bb(\bw)\in \bbC^m$. 
If $d_2(\bw)$ is a simple eigenvalue, then 
\begin{itemize}
\item[(i)]
$d_2(\bw)$ is differentiable at $\bw$ and its gradient is 
\begin{align} 
\nabla d_2(\bw)&=\left[\begin{array}{c}|f_1 q_1 - p_1 |^2-d_2(\bw)| q_1 |^2 \\|f_2 q_2 - p_2 |^2-d_2(\bw)| q_2 |^2  \\\vdots \\|f_m q_m - p_m |^2-d_2(\bw)| q_m |^2 \end{array}\right] =:|F \bq -\bp |^2-d_2(\bw)|\bq |^2\in \bbR^m; \label{eq:gradd2}
\end{align}
\item[(ii)] the symmetric Hessian $\nabla^2 d_2(\bw)\in \bbR^{m\times m} $ is 
\begin{equation}\label{eq:Hess}
\nabla^2 d_2(\bw)=-2{\rm Re}\left(R_3(A_{\bw}-d_2(\bw)B_{\bw})^{\dag}R_3^{\HH}\right)-\nabla d_2(\bw)(|\bq |^2)^{\T}-|\bq |^2 (\nabla d_2(\bw))^{\T},
\end{equation}
where $R_1=\diag(F\bq -\bp )\in \bbC^{m\times m}$, $R_2=\diag(\bq )\in \bbC^{m\times m}$, $$R_3^{\HH}=E_1^{\HH}R_1-d_2(\bw)E_2^{\HH}R_2-B_{\bw} \bc(\bw) (\nabla d_2(\bw))^{\T}\in \bbC^{(n_1+n_2+2)\times m},$$ $E_1=[-\Psi, F\Phi]\in \bbC^{m\times (n_1+n_2+2)}$ and $E_2=[0, \Phi]\in \bbC^{m\times (n_1+n_2+2)}$.
\end{itemize}
\end{proposition}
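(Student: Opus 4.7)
My strategy is standard perturbation theory for a \emph{simple} generalized eigenvalue of a Hermitian positive semi-definite pencil, combined with careful bookkeeping of the normalization $\bc(\bw)^{\HH} B_{\bw} \bc(\bw)=1$. Since $d_2(\bw)$ is assumed simple, both $d_2(\bw)$ and $\bc(\bw)$ depend smoothly on $\bw$ in a neighborhood of the current point (modulo an irrelevant global phase of $\bc$), so I may differentiate the identities
\[
A_{\bw}\bc(\bw)=d_2(\bw)B_{\bw}\bc(\bw)\qquad\text{and}\qquad \bc(\bw)^{\HH}B_{\bw}\bc(\bw)=1
\]
freely in $\bw$. For part (i) I will apply the usual Hellmann--Feynman trick: differentiate the first identity in $w_k$, left-multiply by $\bc^{\HH}$, and use $\bc^{\HH}A_{\bw}=d_2\,\bc^{\HH}B_{\bw}$ together with the normalization to cancel every term containing $\partial\bc/\partial w_k$. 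This yields $\partial d_2/\partial w_k=\bc^{\HH}[\partial A_{\bw}/\partial w_k-d_2\,\partial B_{\bw}/\partial w_k]\bc$. From \eqref{eq:dual_GEPA}--\eqref{eq:dual_GEPB} one reads off $\partial A_{\bw}/\partial w_k=E_1^{\HH}\be_k\be_k^{\T}E_1$ and $\partial B_{\bw}/\partial w_k=E_2^{\HH}\be_k\be_k^{\T}E_2$; combined with $E_1\bc=F\bq-\bp$ and $E_2\bc=\bq$ this immediately gives \eqref{eq:gradd2}.

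For part (ii) I will differentiate $(\nabla d_2)_k=\bc^{\HH}[E_1^{\HH}\be_k\be_k^{\T}E_1-d_2\,E_2^{\HH}\be_k\be_k^{\T}E_2]\bc$ once more in $w_j$. Collecting the $(k,j)$ entries into matrix form and writing $C:=\partial\bc/\partial\bw^{\T}\in\bbC^{(n_1+n_2+2)\times m}$ for the Jacobian of $\bc$, the outcome is
\begin{equation}\nonumber
\nabla^2 d_2(\bw)=2\,{\rm Re}\bigl[(R_1^{\HH}E_1-d_2\,R_2^{\HH}E_2)\,C\bigr]-|\bq|^2(\nabla d_2(\bw))^{\T}.
\end{equation}
The trailing term already matches one of the symmetric outer-product corrections in \eqref{eq:Hess}; the first term requires an explicit handle on $C$. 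Differentiating the eigen-equation column-by-column in $\bw$ produces
\[
(A_{\bw}-d_2 B_{\bw})\,C=-R_3^{\HH},
\]
with $R_3^{\HH}$ exactly as defined in the proposition. A direct check using \eqref{eq:gradd2} shows the solvability condition $\bc^{\HH}R_3^{\HH}=0$, so $C=-(A_{\bw}-d_2 B_{\bw})^{\dag}R_3^{\HH}+\bc\bu^{\T}$ for some $\bu\in\bbC^m$ pinned down (up to an irrelevant phase) by the differentiated normalization.

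Finally I assemble \eqref{eq:Hess} by splitting $R_1^{\HH}E_1-d_2\,R_2^{\HH}E_2=R_3+\nabla d_2\cdot\bc^{\HH}B_{\bw}$ and evaluating each piece of $2\,{\rm Re}[(R_1^{\HH}E_1-d_2\,R_2^{\HH}E_2)\,C]$ separately. The crucial cancellation is $R_3\bc=0$: this is just (i) rewritten, and it causes the gauge term $\bc\bu^{\T}$ to drop out, so $R_3 C=-R_3(A_{\bw}-d_2 B_{\bw})^{\dag}R_3^{\HH}$, producing the term $-2\,{\rm Re}(R_3(A_{\bw}-d_2 B_{\bw})^{\dag}R_3^{\HH})$. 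The remaining $2\,{\rm Re}[\nabla d_2\cdot\bc^{\HH}B_{\bw}C]$ is evaluated via the differentiated normalization, which yields $2\,{\rm Re}[\bc^{\HH}B_{\bw}\,\partial\bc/\partial w_j]=-|q_j|^2$, and thus contributes $-\nabla d_2(|\bq|^2)^{\T}$, the outer product that restores the required symmetry of the Hessian. The main obstacle I foresee is the singularity of $A_{\bw}-d_2 B_{\bw}$: one has to justify the Moore--Penrose inverse by verifying $\bc^{\HH}R_3^{\HH}=0$, and confirm that the gauge freedom in $\bu$ cannot leak into $\nabla^2 d_2$, which it cannot precisely because $R_3\bc=0$. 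Every other step is routine differentiation.
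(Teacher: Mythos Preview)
Your proposal is correct and follows essentially the same perturbation-theory route as the paper: Hellmann--Feynman for (i), then differentiate the eigen-equation to obtain $(A_{\bw}-d_2 B_{\bw})\,\partial\bc/\partial w_k=-R_3^{\HH}\be_k$, invert via the Moore--Penrose pseudoinverse modulo a gauge term along $\bc$, and use the differentiated normalization to close. The only cosmetic difference is that you work with the full Jacobian $C$ and exploit $R_3\bc=0$ to make the gauge term $\bc\bu^{\T}$ drop out of $R_3 C$ without ever solving for $\bu$, whereas the paper proceeds entry-by-entry and explicitly computes $2\,{\rm Re}(\iota_k)$ from the normalization before substituting back; both lead to the same three-term formula \eqref{eq:Hess}.
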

  
{The proof of Proposition \ref{prop:grad} is given in Appendix \ref{AppendixA}.} Even  though \eqref{eq:Hess} gives the explicit form of   $\nabla^2 d_2(\bw)$, the computation involves the ill-conditioned Vandermonde basis matrices $\Psi$ and $\Phi$. Similar to our treatment of computing $d_2(\bw)$ in \eqref{eq:pqvector} and the evaluation stage \eqref{eq:new_pqvector}, we next provide an alternative formulation of $\nabla^2 d_2(\bw)$; the new way uses the orthonormal basis matrices $Q_p$ and $Q_q$ of ${\rm span}(W^{\frac12}\Psi)$ and ${\rm span}(W^{\frac12}\Phi)$, respectively, and the computed vector $\what \bb$  in \eqref{eq:pqvector}. {The proof of Corollary \ref{cor:Hess2} is given in Appendix \ref{AppendixA}.}
\begin{corollary}\label{cor:Hess2}
Under notations and conditions of Proposition \ref{prop:grad}, let  $Q_p\in \bbC^{m\times (n_1+1)}$ and $Q_q\in \bbC^{m\times (n_2+1)}$ be the orthonormal basis matrices  of ${\rm span}(W^{\frac12}\Psi)$ and ${\rm span}(W^{\frac12}\Phi)$, respectively. Let $\sqrt{d_2(\bw)}$ be also the smallest singular value of $(Q_p^{\perp})^{\HH}FQ_q\in \bbC^{(m-n_1-1)\times (n_2+1)}$ (or  equivalently  $(I-Q_pQ_p^{\HH})FQ_q\in \bbC^{m\times ( n_2+1)}$) and $\what \bb$ be the associated right singular vector. Then the symmetric Hessian $\nabla^2 d_2(\bw)\in \bbR^{m\times m}$ can be computed as
\begin{equation}\nonumber
\nabla^2 d_2(\bw)=-2 {\rm Re}\left(\what R_3 D^{\dag}\what R_3^{\HH}\right) -\nabla d_2(\bw)(|\bq |^2)^{\T}-|\bq |^2 (\nabla d_2(\bw))^{\T},
\end{equation}
where $D=\left[\begin{array}{cc}I_{n_1+1} & -S_{qp}^{\HH} \\-S_{qp} & S_F-d_2(\bw) I_{n_2+1}\end{array}\right]$,
$R_1=\diag(F\bq -\bp )\in \bbC^{m\times m}$, $R_2=\diag(\bq )\in \bbC^{m\times m}$, and
$
\what R_3= W^{-\frac12}[-R_1^{\HH}Q_p,(R_1^{\HH}F-d_2(\bw)R_2^{\HH})Q_q-W^{\frac12}\nabla d_2(\bw)\what\bb^{\HH}]\in \bbC^{m\times (n_1+n_2+2)}.
$
\end{corollary}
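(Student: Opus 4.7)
The plan is to rewrite the quantity $R_3(A_{\bw}-d_2(\bw)B_{\bw})^{\dag}R_3^{\HH}$ appearing in Proposition~\ref{prop:grad}(ii) using the thin QR factorizations $W^{\frac12}\Psi=Q_pR_p$ and $W^{\frac12}\Phi=Q_qR_q$, and then show that the resulting expression collapses to $\what R_3 D^{\dag}\what R_3^{\HH}$. The subsidiary terms $-\nabla d_2(\bw)(|\bq|^2)^{\T}-|\bq|^2(\nabla d_2(\bw))^{\T}$ are untouched, so only the pseudo-inverse piece matters.

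First I would introduce the block diagonal matrix $T=\bigl(\begin{smallmatrix}R_p & 0 \\ 0 & R_q\end{smallmatrix}\bigr)$, which is square and invertible under the Corollary's rank hypothesis. Using $F$, $W$, $R_1$, $R_2$ are all diagonal (hence mutually commuting), I would compute $W^{\frac12}E_1 = [-Q_p,FQ_q]T$ and $W^{\frac12}E_2 = [0,Q_q]T$, from which a direct block expansion yields
\[
A_{\bw}=T^{\HH}\begin{pmatrix}I & -S_{qp}^{\HH} \\ -S_{qp} & S_F\end{pmatrix}T,\qquad B_{\bw}=T^{\HH}\begin{pmatrix}0 & 0 \\ 0 & I\end{pmatrix}T,
\]
and therefore $A_{\bw}-d_2(\bw)B_{\bw}=T^{\HH}DT$. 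A parallel computation, using $\Psi^{\HH}=R_p^{\HH}Q_p^{\HH}W^{-\frac12}$, $\Phi^{\HH}=R_q^{\HH}Q_q^{\HH}W^{-\frac12}$, and $B_{\bw}\bc(\bw)=T^{\HH}(0,\what\bb^{\T})^{\T}$, gives $R_3^{\HH}=T^{\HH}\what R_3^{\HH}$, i.e.\ $R_3=\what R_3 T$.

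The delicate step is to verify the pseudo-inverse identity $R_3(T^{\HH}DT)^{\dag}R_3^{\HH}=\what R_3 D^{\dag}\what R_3^{\HH}$, since the naive identity $(T^{\HH}DT)^{\dag}=T^{-1}D^{\dag}T^{-\HH}$ fails for general invertible $T$ when $D$ is singular. The key observation is that $R_3\bc(\bw)=0$: this follows by direct computation using $E_1\bc=F\bq-\bp$, $E_2\bc=\bq$, $\bc^{\HH}B_{\bw}\bc=1$, and the gradient formula \eqref{eq:gradd2}. Since $A_{\bw}-d_2(\bw)B_{\bw}$ is Hermitian with one-dimensional kernel $\operatorname{span}(\bc)$, this says precisely that the columns of $R_3^{\HH}$ lie in its range. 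Setting $y':=T^{-1}D^{\dag}\what R_3^{\HH}$, one checks $My'=T^{\HH}D D^{\dag}\what R_3^{\HH}=T^{\HH}\what R_3^{\HH}=R_3^{\HH}$ (using $\what R_3^{\HH}\in\operatorname{Range}(D)$, which follows from $\what\bc^{\HH}\what R_3^{\HH}=\bc^{\HH}T^{\HH}T^{-\HH}R_3^{\HH}=\bc^{\HH}R_3^{\HH}=0$). Hence $y'$ differs from $(T^{\HH}DT)^{\dag}R_3^{\HH}$ by an element of $\operatorname{span}(\bc)$; multiplying on the left by $R_3$ annihilates this ambiguity and yields $R_3(T^{\HH}DT)^{\dag}R_3^{\HH}=R_3 y'=\what R_3 T\cdot T^{-1}D^{\dag}\what R_3^{\HH}=\what R_3 D^{\dag}\what R_3^{\HH}$.

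Substituting this into the Hessian formula \eqref{eq:Hess} gives the stated expression. I expect the main obstacle to be the pseudo-inverse step just described: one must resist writing the formal identity $(T^{\HH}DT)^{\dag}=T^{-1}D^{\dag}T^{-\HH}$ and instead argue via range/kernel membership. The rest of the work, namely the block factorization of $A_{\bw}$ and $B_{\bw}$ and the factorization $R_3^{\HH}=T^{\HH}\what R_3^{\HH}$, is routine but requires care with conjugate transposes and with commutativity of the diagonal factors $F$, $W$, $R_1$, $R_2$.
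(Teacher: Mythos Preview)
Your proof is correct and follows essentially the same approach as the paper: both introduce the block triangular factor $T=R_{pq}=\diag(R_p,R_q)$, verify $A_{\bw}-d_2(\bw)B_{\bw}=T^{\HH}DT$ and $R_3=\what R_3 T$, and exploit $\bc^{\HH}R_3^{\HH}=0$ together with the one-dimensional kernel to pass from $(A_{\bw}-d_2(\bw)B_{\bw})^{\dag}$ to $D^{\dag}$. The only presentational difference is that the paper re-traces the derivation of Proposition~\ref{prop:grad}(ii) with the new variables (rewriting \eqref{eq:partialcwk} as $D(T\partial_{w_k}\bc)=-\what R_3^{\HH}\be_k$ and then solving modulo $\ker D$), whereas you substitute directly into the final formula \eqref{eq:Hess} and justify the pseudo-inverse replacement by an explicit range/kernel argument; your treatment of the step $(T^{\HH}DT)^{\dag}\neq T^{-1}D^{\dag}T^{-\HH}$ is in fact more careful than the paper's, which simply says the rest ``follows similarly''.
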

 

\section{Lawson's iteration}\label{sec_lawson} 
\subsection{Lawson's  iteration for  the dual {problem}}
As has been explored in \cite{yazz:2023} (see also \cite{clin:1972,rice:1969}), the traditional Lawson's iteration  \cite{laws:1961}   for the linear minimax approximation can be viewed as a monotonically ascent convergent method for the associated Lagrange dual {problem} with   strong duality. According to our dual {problem} \eqref{eq:dualPX}, a natural extension of Lawson's iteration  to the rational case is to use the residual $\Bf-\xi^{(k)}(\bx)$ at the current weight vector $\bw^{(k)}$, i.e., the dual variables, and update it. The same idea has been applied in the literature (e.g., \cite{bama:1970,coop:2007,loeb:1957,hoka:2020,sako:1963}), yielding several versions of Lawson's iteration. We will discuss the differences between these versions in section \ref{subsec_others} and provide  numerical experiments in section \ref{sec_Numerical}.
Our proposed Lawson's iteration is summarized in Algorithm \ref{alg:Lawson};  {specifically, we  refer to Algorithm \ref{alg:Lawson}   as {\tt d-Lawson} to reflect that it solves the dual problem of the rational minimax approximation.}

\begin{algorithm}[h!!!]
\caption{A rational Lawson's iteration ({\tt d-Lawson}) for \eqref{eq:bestf}} \label{alg:Lawson}
\begin{algorithmic}[1]
\renewcommand{\algorithmicrequire}{\textbf{Input:}}
\renewcommand{\algorithmicensure}{\textbf{Output:}}
\REQUIRE Given samples $\{(x_j,f_j)\}_{j=1}^m$ and $0\le n_1+n_2+2\le m$ with $x_j\in \Omega$, a relative tolerance for strong duality $\epsilon_r>0$, the maximum number $k_{\rm maxit}$ of iterations, a new vector $\by=[y_1,\dots,y_{\wtd m}]^{\T}\in \bbC^{\wtd m}$ of nodes; 
\ENSURE   the evaluation vector $\xi^*(\by)\in \bbC^{\wtd m}$ of the (approximation)  $\xi^*\in  \scrR_{(n_1,n_2)}$ of \eqref{eq:bestf} at the new nodes vector $\by$;
        \smallskip

\STATE  (Initialization) Let $k=0$; choose $0<\bw^{(0)}\in {\cal S}$  and a  tolerance $\epsilon_w$ for the weights;
\STATE (Filtering) Remove nodes $x_i$ with $w_i^{(k)}<\epsilon_w$;
\STATE Compute  $d_2(\bw^{(k)})$ and the associated vector $\xi^{(k)}(\bx)=\bp./\bq $  according to \eqref{eq:pqvector};

\STATE (Stop rule and evaluation)  Stop either if $k\ge k_{\rm maxit}$ or 
\begin{equation}\label{eq:stop}
\epsilon(\bw^{(k)}):=\left|\frac{\sqrt{d_2(\bw^{(k)})}-e(\xi^{{(k)}})}{e(\xi^{{(k)}})}\right|<\epsilon_r,~~{\rm where}~~e(\xi^{{(k)}})=\|\Bf-\xi^{(k)}(\bx)\|_\infty,
\end{equation}
and compute $\xi^*(\by)=p^*(\by)./q^*(\by)$ according to \eqref{eq:new_pqvector};

\STATE (Updating weights) Update the weight vector $\bw^{(k+1)}$ according to   ($0<\beta\le 1$)
\begin{equation}\nonumber
w_j^{(k+1)}=\frac{w_j^{(k)}\left|f_j-\xi_j^{(k)}\right|^{\beta}}{\sum_{i}w_i^{(k)}\left|f_i-\xi_i^{(k)}\right|^{\beta}},~~\forall j,
\end{equation} 
and   goto Step 2 with $k=k+1$.
\end{algorithmic}
\end{algorithm}  

We make the following remarks for {{\tt d-Lawson} (Algorithm \ref{alg:Lawson})}.  
\begin{remark}\label{rmk:Alg1}
\item [(1).] The vector $\bw^{(0)}=\be/m$ can be a good choice of the initial weight vector.

\item[(2).] The filtering procedure in Step 2 is based on Theorem \ref{thm:complement}. Indeed, whenever    strong duality (i.e., Ruttan's sufficient condition \eqref{eq:Ruttan}) holds and $\min_{j\in {\cal X}_e(\xi^*)} w_j^*>0$, then  with $0\le \epsilon_w<\min_{j\in {\cal X}_e(\xi^*)} w_j^*$,  the algorithm generally is able to compute the   rational minimax approximation $\xi^*=p^*/q^*$. Our numerical experiments suggest a tiny $\epsilon_w$ (for example $10^{-30}\sim10^{-50}$).

\item[(3).] The stop rule \eqref{eq:stop} is based on strong duality. When \eqref{eq:stop} is met, we can guarantee that the obtained solution is an approximation of the best solution of \eqref{eq:bestf}, and also know that Ruttan's sufficient condition \eqref{eq:Ruttan} holds. Our numerical experiments in section \ref{sec_Numerical} indicate that \eqref{eq:stop} can be numerically satisfied (for a given level of $\epsilon_r$) in many test problems.

\item[(4).] Another interesting observation from our numerical experiments is that the sequence $\left\{\sqrt{d_2(\bw^{(k)})}\right\}_{k=0}^{\infty}$ of dual objective {function values} usually    increases monotonically whenever the rounding error does not dominate; moreover, for any $k$, we have the weak duality $\sqrt{d_2(\bw^{(k)})}\le e(\xi^{(k)})$ and the gap  (and hence $\epsilon(\bw^{(k)})$)  gets smaller as $k$ increases. 

\item[(5).] As reported in \cite{fint:2018}, a small  {\it Lawson exponent} $\beta$ (for example $\beta=1/4$) can make Lawson's iteration much more robust, while a  large one  (for example $\beta=1$) can {lead to faster}  convergence. Even though our numerical experiments indicate the monotonic convergence with respect to  $d_2(\bw)$,  we currently do not have a theoretical guarantee of the convergence for a constant $\beta>0$. Nevertheless, as the relative duality gap $\epsilon(\bw^{(k)})$ is a monitor for the accuracy of the approximation, we can record and return the approximant associated with the smallest  $\epsilon(\bw^{(k)})$ among the previous iterations. This practical and robust strategy was also used in the stabilized SK iteration\footnote{For the code of S-SK, see   \url{https://github.com/jeffrey-hokanson/polyrat}.}.

\item[(6).] The output of   {{\tt d-Lawson}   (Algorithm \ref{alg:Lawson})} is the evaluation vector $\xi^*(\by)\in \bbC^{\wtd m}$ of the (approximation)  $\xi^*\in  \scrR_{(n_1,n_2)}$ of \eqref{eq:bestf} at the new nodes vector $\by$. Whenever the rational {approximant} representation $\xi^*=p^*/q^*$ is desired, by using the evaluation vector $\xi^*(\by)$, certain stable polefinding approaches such as  the function  \textsf{ratfun}  in \cite{itna:2018}, can be employed  to compute the roots of $p^*$ and $q^*$, or their  corresponding coefficient vectors in the monomial basis. In {\tt Chebfun}\footnote{\url{http://www.Chebfun.org}} \cite{drht:2014}, a similar function is \textsf{ratinterp}.
\end{remark}

\subsection{Other iteratively reweighted least-squares (IRLS) iterations}\label{subsec_others}
In the literature, one of the earliest IRLS methods that introduces a sequence of weighted least-squares problems to approach a rational approximation is the Loeb algorithm  \cite{loeb:1957}. The same method was also proposed in \cite{sako:1963}, known as the SK iteration by Sanathanan and  Koerner. Specifically, in the Loeb algorithm, the reciprocal of the current denominator $\Phi\bb^{(k)}$ is used as a weight vector to compute  the coefficient vectors $\ba^{(k+1)}$ and $\bb^{(k+1)}$:
\begin{equation}\label{eq:SK}
\left[\begin{array}{c}\ba^{(k+1)} \\\bb^{(k+1)} \end{array}\right]=\argmin_{\|\ba\|_2^2+\|\bb\|_2^2=1,~\bb\ne \mathbf{0}}\left\|\left(\diag(\Phi\bb^{(k)})\right)^{-1}[-\Psi, F\Phi] \left[\begin{array}{c}\ba \\\bb\end{array}\right]\right\|_2.
\end{equation}
Comparing with our dual function $d_2(\bw)$ given in \eqref{eq:rat-d-compt}, we know that at the $k$th iteration, the associated $\bw$ is the normalized vector of  $\frac{1}{|\Phi\bb^{(k)}|^2}$ (i.e., $\sqrt{W}=\left(\diag(\Phi\bb^{(k)})\right)^{-1}$ in \eqref{eq:rat-d-compt}). Note that the scaled weight vector $\frac{1}{m|\Phi\bb^{(k)}|^2}$ also leads to the same solution in \eqref{eq:SK}; correspondingly, the weight vector $\bw=\frac{1}{m|\Phi\bb^{(k)}|^2}$ fulfills the constraint: 
$
\|\sqrt{W}\Psi\bb^{(k)}\|_2=1
$ 
of \eqref{eq:rat-d-compt}. In this sense, it seems that the Loeb algorithm (i.e, the SK algorithm) chooses a particular feasible weight vector for the constraint of \eqref{eq:rat-d-compt} and   treats its objective function and the constraint alternatively. Unfortunately, as claimed in \cite[Section 2.2]{coop:2007}   `{\it there is no convergence proof for the algorithm, and when used for least-squares problems even if the algorithm does converge, it is almost certain not to converge to a best approximation}.'  See also the numerical {results} reported in  \cite{bama:1970}.  

In a recent work \cite{hoka:2020}, a stabilized SK iteration  was proposed. Specifically, the weights are updated by
$$
w_j^{(k+1)}=\frac{w_j^{(k)}}{\left|[\Phi\bb^{(k)}]_j\right|},~~w_j^{(k+1)}=\frac{w_j^{(k+1)}}{\sum_{i}w_i^{(k+1)}},\quad \forall j,
$$
and solve 
\begin{equation}\label{eq:S-SK}
\left[\begin{array}{c}\ba^{(k+1)} \\\bb^{(k+1)} \end{array}\right]=\argmin_{\|\ba\|_2^2+\|\bb\|_2^2=1,~\bb\ne \mathbf{0}}\left\|  \diag(\bw^{(k+1)})[-\Psi, F\Phi] \left[\begin{array}{c}\ba \\\bb\end{array}\right]\right\|_2.
\end{equation}
In computing \eqref{eq:S-SK}, the stabilized SK iteration uses the weighted Arnoldi process  to form orthogonal basis matrices for $\diag(\bw^{(k+1)})\Psi$ and $\diag(\bw^{(k+1)}) F\Phi$, and also calls the Arnoldi process for evaluation at new points (see \cite{brnt:2021} and also \eqref{eq:pqvector} and \eqref{eq:new_pqvector}). It is reported in  \cite{hoka:2020} that the stabilized SK iteration performs more stably, and generally returns a smaller least-squares residual norm than the standard SK iteration. However, it can cycle and does not monotonically decrease the least-squares residual norm; furthermore, the computed solution is generally not the   rational minimax  approximant of \eqref{eq:bestf}. We shall report numerical results in section \ref{sec_Numerical}.

A remarkable recent method for {computing} the rational approximation is the AAA (stands for {\it ``adaptive Antoulas-Anderson''}) algorithm proposed by  Nakatsukasa,  S\`ete and  Trefethen in \cite{nase:2018}. To handle the ill-conditioned Vandermonde basis, the method represents the rational approximation in barycentric form, where the associated support points are selected iteratively in a greedy way to  avoid instabilities, and then computes the corresponding   coefficients in a least-squares sense over the nodes excluding the supporting points. Even though the AAA algorithm is generally in a least-squares mode, it is also a very efficient approach to compute a good approximation of the minimax approximation \eqref{eq:bestf}. Moreover, Lawson's idea can be further incorporated and  AAA-Lawson \cite{fint:2018,natr:2020} turns out to be an effective method for  \eqref{eq:bestf}.  In the next section, we shall carry out numerical tests and report on numerical results from AAA,  AAA-Lawson, the stabilized SK iteration,  {the rational Krylov fitting (RKFIT)  \cite{begu:2017,gogu:2021}, and {\tt d-Lawson} (Algorithm \ref{alg:Lawson})}. Our numerical experiments demonstrate that {{\tt d-Lawson}} is an effective competitive   method for computing the   minimax approximation  of \eqref{eq:bestf}.

\section{Numerical results}\label{sec_Numerical} 
\def\zhu#1{\textcolor{red}{#1} }

We implement {\tt d-Lawson}\footnote{The MATLAB code of {\tt d-Lawson} is available at \url{https://ww2.mathworks.cn/matlabcentral/fileexchange/167176-d-lawson-method}.}  in MATLAB R2018a and conduct numerical experiments on a 13-inch {MacBook Air} with {an} M2 chip and 8Gb memory. The unit machine roundoff {is} $\tu=2^{-52}\approx 2.2\times 10^{-16}$. Initially, we set $\bw^{(0)}=\be/m$; particular parameters  of {\tt d-Lawson} (Algorithm \ref{alg:Lawson}) are set by default as $k_{\rm maxit}=40$,   $\epsilon_r=10^{-5}$ and  $\beta=1$, and the algorithm returns the computed solution given in (5) Remark   \ref{rmk:Alg1} if it is not otherwise specified.   We denote by {{\tt d-Lawson}($k_{\rm maxit}$)}, S-SK($k_{\rm maxit}$), AAA($k_{\rm maxit}$) and  {RKFIT($k_{\rm maxit}$)},  the versions of {\tt d-Lawson}, the stabilized SK\footnote{For the stop rule of S-SK, we choose $\|\sqrt{W^{(k)}}(\xi^{(k)}(\bx)-\xi^{(k-1)}(\bx))\|_2\le  10^{-11} $  which is also used  in \url{https://github.com/jeffrey-hokanson/polyrat}.} (S-SK), AAA-Lawson\footnote{For the AAA algorithm \cite{nase:2018,fint:2018} in the package {\tt Chebfun},  one can set the maximum number of Lawson's iteration $k$ by calling {\tt aaa(f,x, 'lawson',k)}; we shall denote this version by AAA($k$). In particular, AAA(0) represents the basic AAA algorithm \cite{nase:2018} without Lawson's iteration.} (call {\tt aaa} in {\tt Chebfun}) and RKFIT\footnote{The code of RKFIT is available at \url{http://rktoolbox.org/}. In our testing, the initial poles are randomly drawn from the standard normal distribution.}  with the specific maximum number of iterations  $k_{\rm maxit}$,  respectively. 
 
\subsection{Numerical experiments on real cases}\label{subsec-real}
{We first evaluate} the proposed method, {\tt d-Lawson}, and compare the numerical performance with AAA, AAA-Lawson, S-SK iteration {and RKFIT for real-valued functions on subsets of the real line}.  For this purpose, we choose two basic real functions without singularity and other two with singularities in given intervals.

\subsubsection{Basic function}
We   evaluate the methods on the following two basic functions 
$$
f_1(x)= |x|,   ~x \in[-1,1],\quad f_2(x)= \sqrt{x},    ~x \in \left[10^{-8}, 1\right].
$$
The absolute function is a standard test example that contains a nonsmooth point  $x=0$;    the   rational minimax approximation of $\sqrt{x}$ is a classical Zolotarev problem \cite[Chapter 9]{akhi:1990} and is used for testing  in  \cite{fint:2018}.  For each function, we compute the   minimax {approximant} 
$\xi(x) = p(x)/q(x)\in \scrR_{(n_1,n_2)}$ on a set of samples  $\{(x_j,f(x_j))\}_{j=0}^m,$
where $\{x_j\}_{j=0}^m$ are equally spaced in their respective intervals with  
 $m = 2000.$

As a representative example, we present detailed numerical results from $f_1(x)=|x|$ to demonstrate
\begin{itemize}
\item[ (i)] the curve of the error function (i.e., the equioscillation property \cite[Theorem 24.1]{tref:2019a}), 
\item[(ii)] the monotonic convergence of Algorithm \ref{alg:Lawson} with respect to the dual objective function value $d_2(\bw)$, 
\item [(iii)] the weak duality $\sqrt{d_2(\bw^{(k)})}\le e(\xi^{(k)})$ as well as strong duality \eqref{eq:strongdualityRuttan} as $k$ increases,  
\item[(iv)] the performance of the filtering procedure in Step 2 of Algorithm \ref{alg:Lawson} and verify the theoretical result of Theorem \ref{thm:complement}. 
\end{itemize} 
To demonstrate (i)$\sim$(iii), we set  $\epsilon_w=0$ (i.e., the filtering procedure in Step 2 of Algorithm \ref{alg:Lawson} is inactive) for {\tt d-Lawson}.    Figure \ref{fig:7-exam1-1} gives  the minimax error curves  ($e(\xi)=\|\Bf-\xi(\bx)\|_\infty$ defined in \eqref{eq:exi} is the   maximum  error of the approximant $\xi$) for approximating $f_1(x)$ with  $\xi(x) = p(x)/q(x)\in \scrR_{(4,4)}$. It can be seen that the equioscillation property is clearly observed in  {\tt d-Lawson}(20) and  AAA(20), but not in S-SK(20) and {in RKFIT(20)}.

\begin{figure}[h!!!] 
 \begin{center}
\includegraphics[width=0.99\linewidth,height=0.2\textheight]{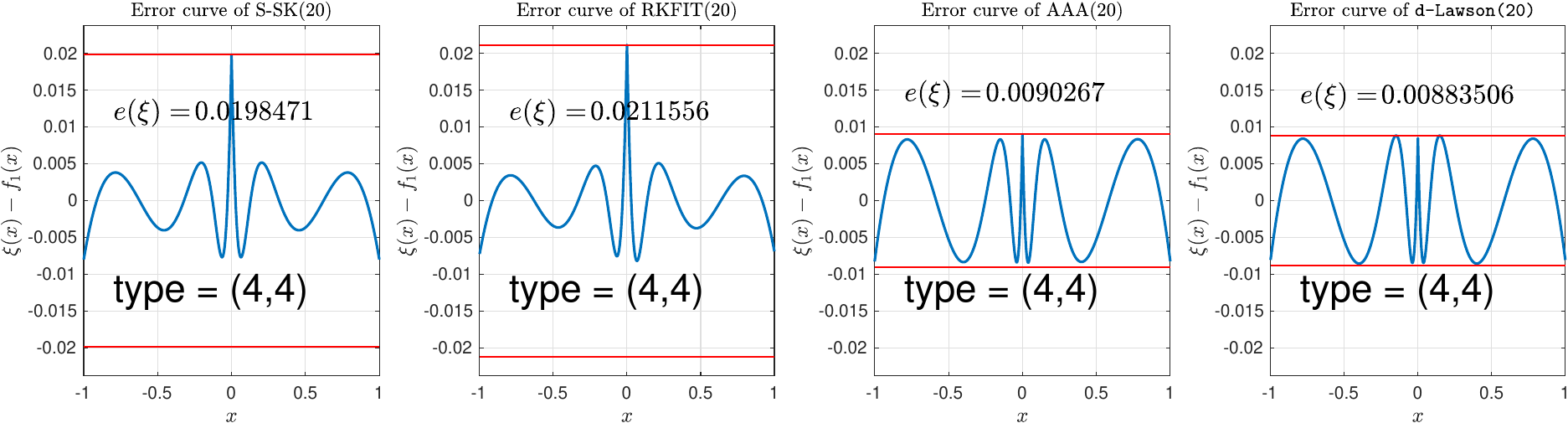} 
\caption{{\small Error curves  (blue) for type (4,4) approximant $\xi(x)  \approx f_1(x)$ by the {four} methods.} }
\label{fig:7-exam1-1}
 \end{center}
\end{figure}

To demonstrate (ii) and (iii),  we plot the sequences of $\left\{e(\xi^{(k)})\right\}$ and $\left\{\sqrt{d_2(\bw^{(k)})}\right\}$ in Figure \ref{fig:7-exam1-2} for {\tt d-Lawson} (Algorithm \ref{alg:Lawson}) after  a fixed number $k$ of iterations (here the computed approximant is not the one associated with the smallest $\epsilon(\bw^{(k)})$ described in  (5) Remark \ref{rmk:Alg1}). It   demonstrates the monotonic convergence  of  $\left\{\sqrt{d_2(\bw^{(k)})}\right\}$, the weak duality and strong duality (i.e., Ruttan's sufficient condition \eqref{eq:Ruttan}). The latter implies that a best   rational approximant in $\scrR_{(4,4)}$ satisfying Ruttan's sufficient condition \eqref{eq:Ruttan} exists.

\begin{figure}[h!!!]
 \begin{center}
\includegraphics[width=0.85\linewidth,height=0.22\textheight]{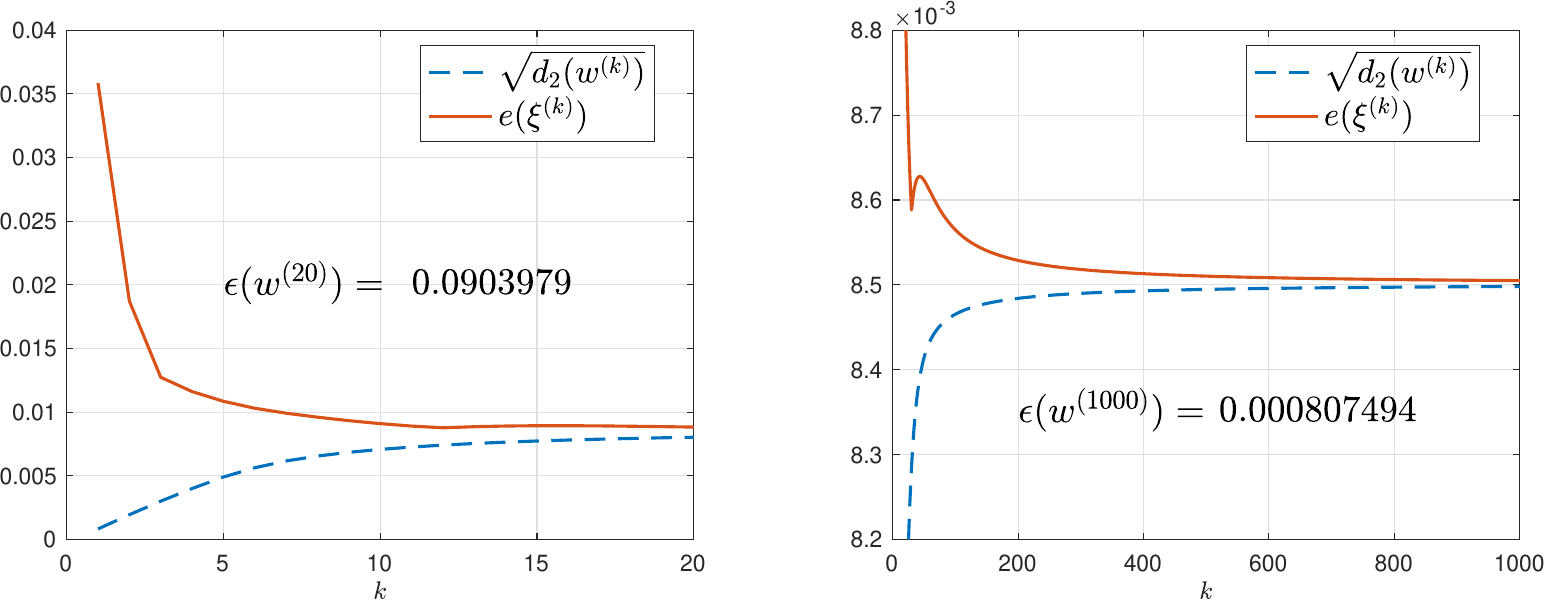} 
\caption{ \small The sequences of $\left\{e(\xi^{(k)})\right\}$  and $\left\{\sqrt{d_2(\bw^{(k)})}\right\}$ for {{\tt d-Lawson}  (Algorithm \ref{alg:Lawson})} for approximating $|x|$ with type ${(4,4)}$.}
\label{fig:7-exam1-2}
\end{center}
\end{figure}

We further demonstrate (iv), i.e., the filtering procedure in Step 2 of {\tt d-Lawson} (Algorithm \ref{alg:Lawson}) and  the theoretical result of Theorem \ref{thm:complement}. To this end, we choose three tolerances   $\epsilon_w=0,~\epsilon_w=10^{-40}$, $\epsilon_w=10^{-30}$, and run {\tt d-Lawson}.   In Figure \ref{fig:7-exam1-3}, we 
provide the error curves, the maximum  error, the relative duality gap $\epsilon(\bw^{(k)})$   associated with the weight vector $\bw^{(k)}$ and the number of {final remaining nodes}. It can be seen that the three approximants share the same error curve with the maximum error $e(\xi)\approx 8.51\times 10^{-3}$ and the relative duality gap $\epsilon(\bw^{(k)})\approx 8.07\times 10^{-4}$. 
 
\begin{figure}[h!!!]
\begin{center}
\includegraphics[width=0.95\linewidth,height=0.2\textheight]{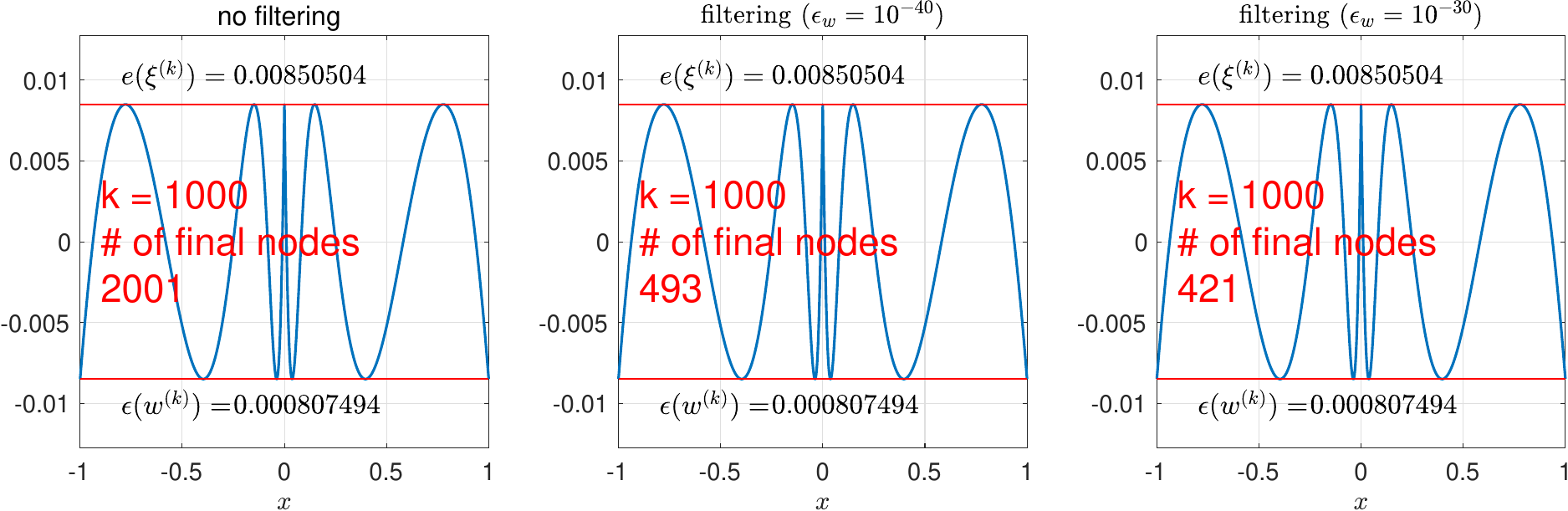} 
\caption{ \small    Error curves  of type ${(4,4)}$ to approximate $|x|$ with $\epsilon_w=0, 10^{-40},~10^{-30}$, respectively.
}
\label{fig:7-exam1-3}
\end{center}
\end{figure}
 
Setting $\epsilon_w=0$ for {{\tt d-Lawson}  (Algorithm \ref{alg:Lawson})}, we  extend the numerical test on $f_1(x)=|x|$ and $f_2(x)=\sqrt{x}$ by  varying  $n_1$ and $n_2$. {In order to clearly demonstrate the performance in Figure \ref{fig:7-exam1-4}, we only  plot the maximum errors  $e(\xi)=\|\Bf-\xi(\bx)\|_\infty$ of AAA, AAA-Lawson, S-SK and {\tt d-Lawson} versus various diagonal types $(n,n)$, and  report results from RKFIT in Tables \ref{tab:7-exam1} and \ref{tab:7-exam2}.}  It is observed that {\tt d-Lawson}(40)  performs stably in computing the minimax approximants. AAA(40) is also effective and very efficient, but in some special cases (e.g., the types of $(12,12)$ and  $(18,18)$),  the computed solutions are only {near best}. S-SK and {RKFIT} can return good rational approximants, which generally are not the minimax solutions as  the corresponding  error curves do not {satisfy} the equioscillation property.

\begin{figure}[h!!!]
\begin{minipage}[t]{1\textwidth} 
\begin{center}\hskip-5 mm
{ \subfigure{
		\includegraphics[width=0.49\columnwidth,height=0.3\textheight]{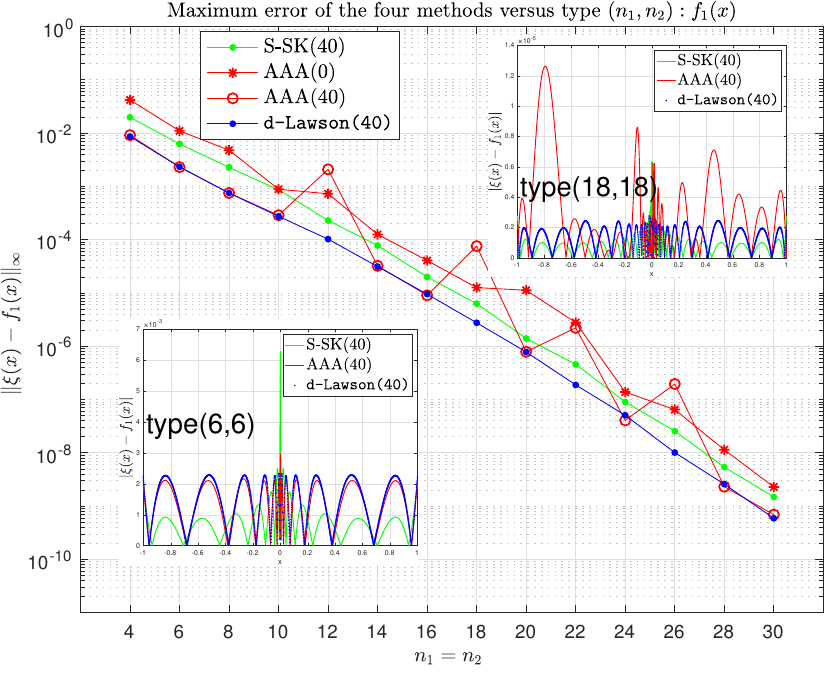}  
	}
	\hspace{0cm}\subfigure{
		\includegraphics[width=0.49\columnwidth,height=0.3\textheight]{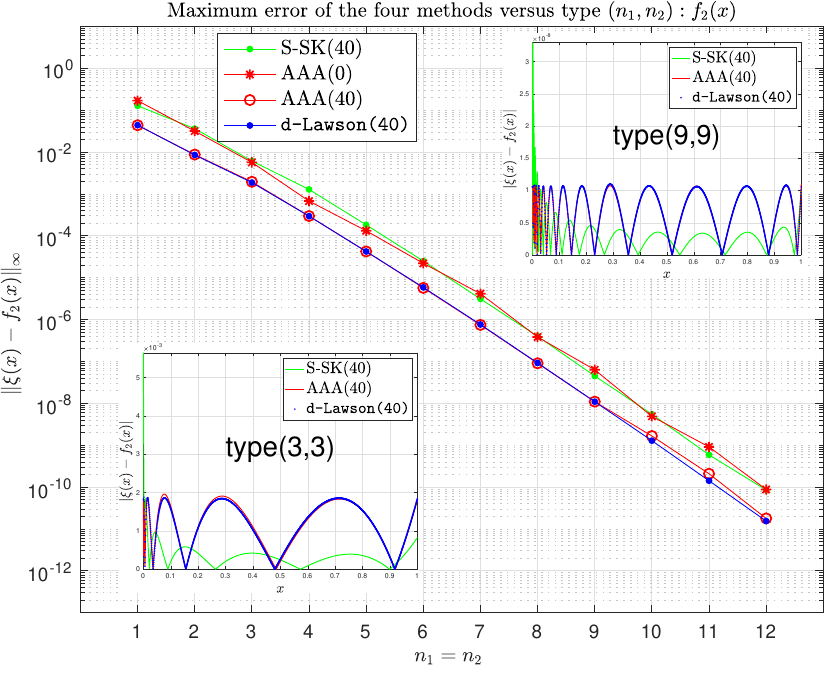}  
	}
  }
\caption{ \small  The maximum error $e(\xi)=\|\Bf-\xi(\bx)\|_\infty$ of
 the four methods with respect to various types $(n,n)$ in approximating $f_1(x)=|x|$ (left) and $f_2(x)=\sqrt{x}$ (right). Error curves with particular types $(n,n)$  are also plotted.
}\label{fig:7-exam1-4}
\end{center}
\end{minipage}
\small\captionof{table}{\small The relative duality gap $\epsilon(\bw^{(k)})$ and maximum errors for approximating $|x|$.}
\begin{center}\tabcolsep0.08in
\begin{tabular}[h!!!]{|c|c|c|c|c|c|c|}
\hline  
 \tabcolsep0.1in
  & $\epsilon(\bw^{(k)})$ & $\sqrt{d_{2}(\bw^{(k)})}$ &  $ e_{\tt d-Lawson(40)}(\xi) $   &      
  $e_{\rm S-SK(40)}(\xi) $  &   $e_{\rm AAA(40)}(\xi) $   &  ${e_{\rm RKFIT(40)}(\xi) }$ \\  \hline
(4,4) &        0.029107 &      8.3752e-03 &      8.6262e-03 &      1.9847e-02 &      9.1480e-03 &      2.1156e-02 \\ \hline 
(8,8) &        0.054627 &      7.1036e-04 &      7.5141e-04 &      2.2843e-03 &      7.5278e-04 &      2.7722e-03 \\ \hline 
(12,12) &        0.157363 &      8.6457e-05 &      1.0260e-04 &      2.2970e-04 &      2.0890e-03 &      3.9191e-04 \\ \hline 
(16,16) &        0.216452 &      7.4682e-06 &      9.5313e-06 &      2.0034e-05 &      9.0516e-06 &      1.5327e-03 \\ \hline 
(20,20) &        0.401854 &      4.6816e-07 &      7.8268e-07 &      1.4031e-06 &      7.8967e-07 &      8.4410e-04 \\ \hline 
(24,24) &        0.409439 &      3.0056e-08 &      5.0895e-08 &      9.0193e-08 &      4.0732e-08 &      2.3178e-07 \\ \hline 
(28,28) &        0.340707 &      1.7076e-09 &      2.5901e-09 &      5.4073e-09 &      2.3281e-09 &      9.9814e-09 \\ \hline 
\end{tabular}
\label{tab:7-exam1}
\end{center}

\begin{center} \tabcolsep0.08in
\captionof{table}{\small The relative duality gap $\epsilon(\bw^{(k)})$ and maximum errors for approximating $\sqrt{x}$.} 
\begin{tabular}[h!!!]{|c|c|c|c|c|c|c|}
\hline  
 \tabcolsep0.1in
  & $\epsilon(\bw^{(k)})$ & $\sqrt{d_{2}(\bw^{(k)})}$ &  $ e_{\tt d-Lawson(40)}(\xi) $   &      
  $e_{\rm S-SK(40)}(\xi) $  &   $e_{\rm AAA(40)}(\xi) $   &  ${e_{\rm RKFIT(40)}(\xi) }$ \\  \hline
(1,1) &        0.020489 &      4.3208e-02 &      4.4111e-02 &      1.2792e-01 &      4.3897e-02 &      1.3141e-01 \\ \hline 
(3,3) &        0.031519 &      1.8219e-03 &      1.8812e-03 &      6.1498e-03 &      1.9605e-03 &      1.0191e-02 \\ \hline 
(5,5) &        0.026301 &      4.1306e-05 &      4.2422e-05 &      1.8375e-04 &      4.2540e-05 &      2.0911e-04 \\ \hline 
(7,7) &        0.066449 &      7.1439e-07 &      7.6524e-07 &      3.1287e-06 &      7.4933e-07 &      3.5369e-06 \\ \hline 
(9,9) &        0.039936 &      1.0545e-08 &      1.0984e-08 &      4.5100e-08 &      1.1017e-08 &      5.1302e-08 \\ \hline 
(11,11) &        0.050260 &      1.3539e-10 &      1.4256e-10 &      5.9256e-10 &      2.0898e-10 &      6.8057e-10 \\ \hline 
\end{tabular}
\label{tab:7-exam2} 
\end{center}
\end{figure}

To demonstrate strong duality with different $(n_1,n_2)$ for $f_1(x)=|x|$ and $f_2(x)=\sqrt{x}$,  in Tables \ref{tab:7-exam1} and \ref{tab:7-exam2}, respectively, we present the final  relative duality gaps  $\epsilon(\bw^{(k)})$ for {{\tt d-Lawson}  (Algorithm \ref{alg:Lawson})}. 
Reported are also the maximum errors from other methods. 
{These} numerical experiments  indicate that Ruttan's sufficient condition \eqref{eq:Ruttan} holds in all these cases (by increasing $k$, we observed that $\epsilon(\bw^{(k)})$ from {\tt d-Lawson}  decreases).

{A further verification of {\tt d-Lawson} is to compare it with the {\tt minimax} algorithm \cite{fint:2018} in the  package {\tt Chebfun}.  {\tt minimax} is a method based on the Remez algorithm  using the barycentric representation  for the rational approximation. It is a very efficient and accurate method for computing minimax rational approximants of a continuous real-valued function on subsets of the real line. In Figure \ref{fig:7-exam1-5}, using $m=200,000$ equispaced points in [-1,1] for {\tt d-Lawson}, we plot  maximum errors  of the computed approximants for $|x|$ from {\tt d-Lawson}  and {\tt minimax}. We observed that for types $(n,n)$ with $1\le n\le 30$,   {\tt d-Lawson} provides roughly the same accuracy as {\tt minimax}. For $30<n<60$, it is noticed that the maximum errors  from {\tt d-Lawson} are smaller than {\tt minimax}. This does not imply  that {\tt d-Lawson} is more accurate than {\tt minimax} since {\tt d-Lawson} solves a discrete rational minimax approximation whose optimal maximum error should be smaller than that of the continuous rational minimax approximation. Indeed, {\tt minimax}  matches well with  the theoretical convergence rate \cite{stah:1993} $E_{n,n}(|x|,[-1,1])\sim 8e^{-\pi \sqrt{n}}$. 
For $60<n<80$, rounding errors seem to play a role and {\tt d-Lawson} has difficulty in computing an accurate rational minimax approximant: the final  relative duality gaps  $\epsilon(\bw^{(k)})$ are larger than 0.9.}
   \begin{figure}[h!!!]
\begin{center}
		\includegraphics[width=0.65 \columnwidth,height=0.27 \textheight]{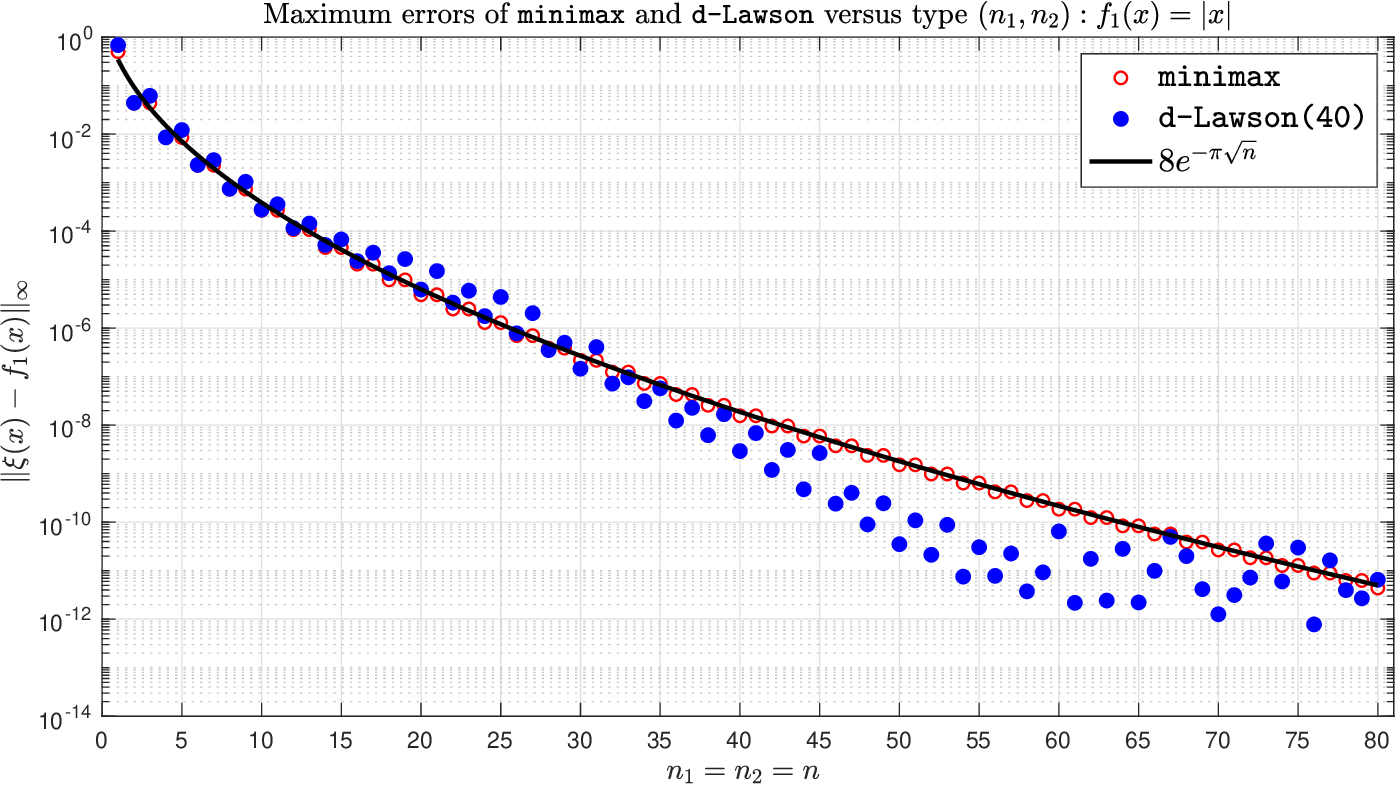}   
\caption{ \small Maximum errors of the computed approximants for $|x|$ from {\tt d-Lawson} (with $m=200,000$ equispaced points in [-1,1]) and {\tt minimax}.  
}
\label{fig:7-exam1-5}
\end{center}
\end{figure}
  
\subsubsection{Function with singularities on an interval}
{We carry out more tests on real functions with singularities in given intervals}. For this purpose, we choose the following two examples used in \cite{fint:2018}:
$$
f_3(x)=-\frac{1}{\log |x|},~~ x \in[-0.1,0.1], ~~{\rm and}~~f_4(x)=\frac{100 \pi\left(x^2-0.36\right)}{\sinh \left(100 \pi\left(x^2-0.36\right)\right)},  ~ x \in[-1,1].
$$
In the left two subfigures of Figure \ref{fig:spike}, $f_3(x)$ and $f_4(x)$ are plotted in the given intervals; the right two subfigures of Figure \ref{fig:spike} present the error curves corresponding to the  types $(7, 11)$ and $(22,20)$, respectively, and the  relative duality gap $\epsilon(\bw^{(k)})$ from {{\tt d-Lawson}  (Algorithm \ref{alg:Lawson})}. In our discrete data, we set $f_3(0)=0$ and $f_4(\pm 0.6)=1$. 
\begin{figure}[h!!!]
\begin{center}
\includegraphics[width=0.85 \linewidth,height=0.3\textheight]{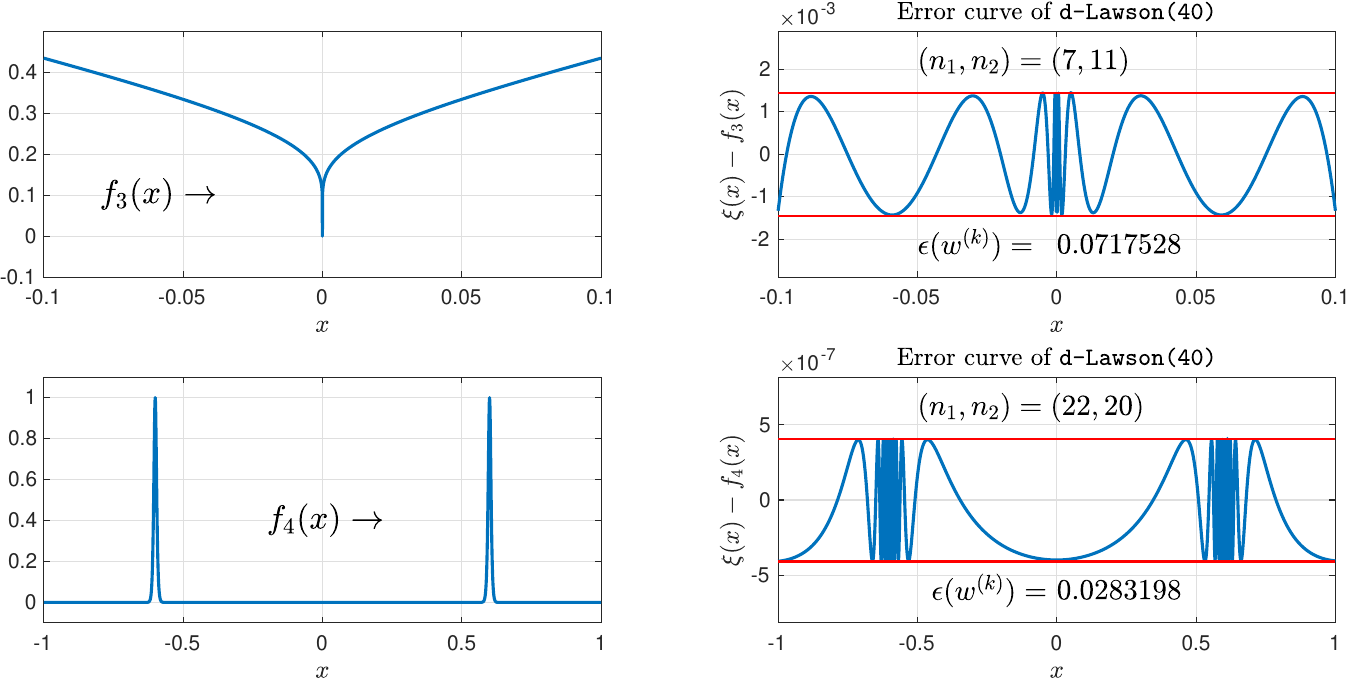} 
\caption{ \small  Left two:  $f_3(x)$ and $f_4(x)$. Right two: error curves and   relative duality gaps $\epsilon(\bw^{(k)})$.} \label{fig:spike}
\end{center}
\end{figure}

To report the results  for $f_3(x)$ and $f_4(x)$, we analogously choose various diagonal types $(n,n)$ and present 
 the maximum errors  $e(\xi)=\|\Bf-\xi(\bx)\|_\infty$ in Figure \ref{fig:7-exam3}. The corresponding relative duality gaps  $\epsilon(\bw^{(k)})$ of {\tt d-Lawson}  and the maximum errors   are given in Tables \ref{tab:7-exam3} and \ref{tab:7-exam4}, respectively. For $f_3(x)$, we noticed that AAA(40) is unstable in some cases but the pure AAA(0) can stably provide good (non-minimax) rational approximants. For $f_4(x)$, the two singularities points $x=\pm 0.6$ certainly make the minimax rational approximation hard, and we noticed that  the methods {(except RKFIT)} perform unstably for some  types, e.g., (14,14), (18,18) and (22,22), of $(n,n)$. Even though it is difficult to check the existence of the minimax rational approximant and Ruttan's sufficient condition for these cases, we can say, from  the relative duality gap $\epsilon(\bw^{(k)})$ in Table \ref{tab:7-exam4}, that these approximants from {{\tt d-Lawson}  (Algorithm \ref{alg:Lawson})} are far from the best.


\begin{figure}[h!!!]
\begin{minipage}[t]{1\textwidth}
 
\begin{center}
{ \hskip-3.45mm
\subfigure{
		\includegraphics[width=0.49\columnwidth,height=0.3\textheight]{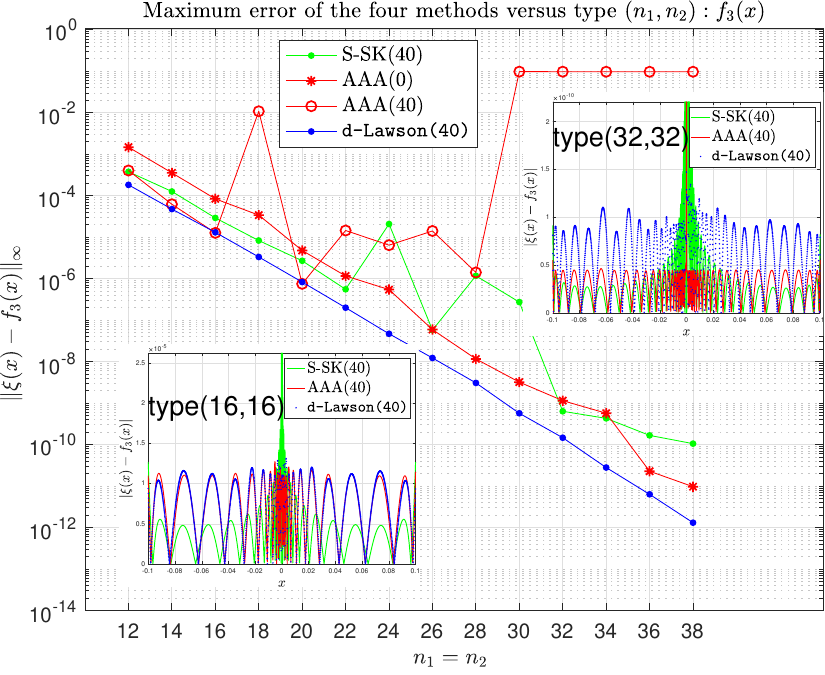}  
	}
	\hspace{0.0cm}\subfigure{
		\includegraphics[width=0.49\columnwidth,height=0.3\textheight]{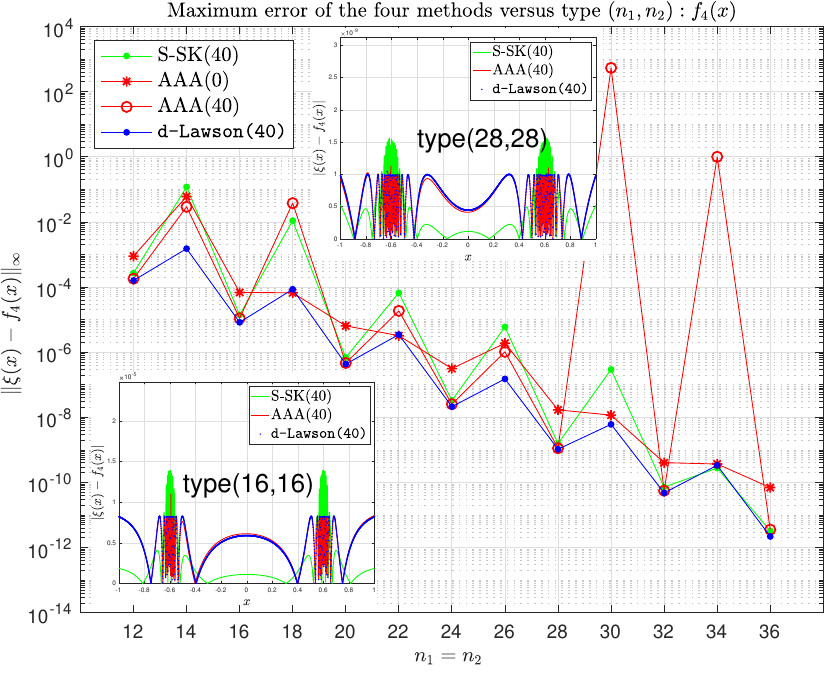}  
	}
  }
\caption{ \small The maximum  error $e(\xi)=\|\Bf-\xi(\bx)\|_\infty$ of
 the four methods with respect to various types $(n,n)$ in approximating $f_3(x)$ (left) and $f_4(x)$ (right). Error curves with particular types $(n,n)$  are also plotted.}\label{fig:7-exam3}
\end{center}
\small
\begin{center} \tabcolsep0.08in
\captionof{table}
{\small The relative duality gap $\epsilon(\bw^{(k)})$ and maximum errors for approximating $f_3(x).$}
\begin{tabular}[h!!!]{|c|c|c|c|c|c|c|}
\hline  
 \tabcolsep0.1in
  & $\epsilon(\bw^{(k)})$ & $\sqrt{d_{2}(\bw^{(k)})}$ &  $ e_{\tt d-Lawson(40)}(\xi) $   &      
  $e_{\rm S-SK(40)}(\xi) $  &   $e_{\rm AAA(40)}(\xi) $   &  ${e_{\rm RKFIT(40)}(\xi) }$ \\  \hline
(12,12) &        0.209038 &      1.4454e-04 &      1.8274e-04 &      3.7847e-04 &      4.0569e-04 &      5.6217e-04 \\ \hline 
(16,16) &        0.242596 &      9.9230e-06 &      1.3101e-05 &      2.9179e-05 &      1.2718e-05 &      4.8388e-05 \\ \hline 
(20,20) &        0.260819 &      6.1323e-07 &      8.2960e-07 &      2.7029e-06 &      7.5248e-07 &      2.9951e-06 \\ \hline 
(24,24) &        0.281158 &      3.3817e-08 &      4.7043e-08 &      2.0787e-05 &      6.4304e-06 &      2.0847e-07 \\ \hline 
(28,28) &        0.481958 &      1.5965e-09 &      3.0819e-09 &      1.1737e-06 &      1.4169e-06 &      1.8890e-06 \\ \hline 
(32,32) &        0.417659 &      8.5690e-11 &      1.4715e-10 &      6.3809e-10 &      9.7529e-02 &      3.5943e-10 \\ \hline 
\end{tabular}
 \label{tab:7-exam3}
\end{center}

\begin{center}\tabcolsep0.08in
\captionof{table}{\small The relative duality gap $\epsilon(\bw^{(k)})$ and maximum errors for approximating $f_4(x).$ }
\begin{tabular}[h!!!]{|c|c|c|c|c|c|c|}
\hline  
 \tabcolsep0.1in
  & $\epsilon(\bw^{(k)})$ & $\sqrt{d_{2}(\bw^{(k)})}$ &  $ e_{\tt d-Lawson(40)}(\xi) $   &      
  $e_{\rm S-SK(40)}(\xi) $  &   $e_{\rm AAA(40)}(\xi) $   &  ${e_{\rm RKFIT(40)}(\xi) }$ \\  \hline
(16,16) &        0.011539 &      8.1768e-06 &      8.2722e-06 &      1.3946e-05 &      1.1018e-05 &      1.3164e-05 \\ \hline 
(18,18) &        0.997610 &      7.9836e-07 &      3.3398e-04 &      1.6991e-04 &      3.7919e-02 &      1.9079e-05 \\ \hline 
(20,20) &        0.027198 &      4.1072e-07 &      4.2221e-07 &      6.8905e-07 &      4.6745e-07 &      7.3126e-07 \\ \hline 
(22,22) &        0.965329 &      7.7170e-08 &      2.2258e-06 &      1.1899e-04 &      1.8589e-05 &      7.3253e-07 \\ \hline 
(24,24) &        0.037679 &      2.0396e-08 &      2.1194e-08 &      3.3124e-08 &      2.5415e-08 &      3.3162e-08 \\ \hline 
(26,26) &        0.944660 &      4.0555e-09 &      7.3284e-08 &      2.5549e-07 &      1.0281e-06 &      3.4716e-08 \\ \hline 
\end{tabular}\label{tab:7-exam4}
\end{center}
\end{minipage}
\end{figure}

\subsection{Numerical experiments on complex cases}\label{subsec-complex}

\subsubsection{Analytic function on a domain $\Omega$}\label{subsubsec-complex-I}
For the complex case, we first test  the methods on the following two basic analytic functions \cite{natr:2020,will:1972} on the unit disc. By the maximum modulus principle for analytic functions, we choose nodes $z_{j} = e^{  -\pi{\rm i} + \frac{{\rm i} 2\pi j}{m} },~j=0,1,\dots,m-1~(m=2000)$ on the unit circle.
\[f_{5}(z) = \tan(z), \quad f_{6}(z) = \log\left(1+\frac{z}{2}\right), \quad  z\in {\cal D}:=\{z\in \bbC: |z|\le 1\}. \] 

By calling the tested methods on $f_5(x)$ and $f_6(x)$ using different diagonal types $(n,n)$, we  present the numerical results in Figure \ref{fig:comp_fun12}, Tables \ref{tab:comp_fun1} and \ref{tab:comp_fun2}. For the two analytic functions on ${\cal D}$, the maximum error reaches the level $10^{-15}$ when the degree $n_1=n_2\ge 9$. It seems that rounding errors play a role so   that the maximum errors do not decrease as the degrees  $n_1=n_2\ge 9$ for all methods. The relative duality gaps $\epsilon(\bw^{(k)})$ from {{\tt d-Lawson}} are also reported in Tables \ref{tab:comp_fun1} and \ref{tab:comp_fun2}. Note that $\epsilon(\bw^{(k)})$ increases significantly from $n_1=n_2\le 7$ to   $n_1=n_2\ge 9$, {possibly due to rounding error effects}.

\begin{figure}[h!!!]
\begin{center}\hskip-3.45mm
{ 
\includegraphics[width=0.49\columnwidth,height=0.3\textheight]{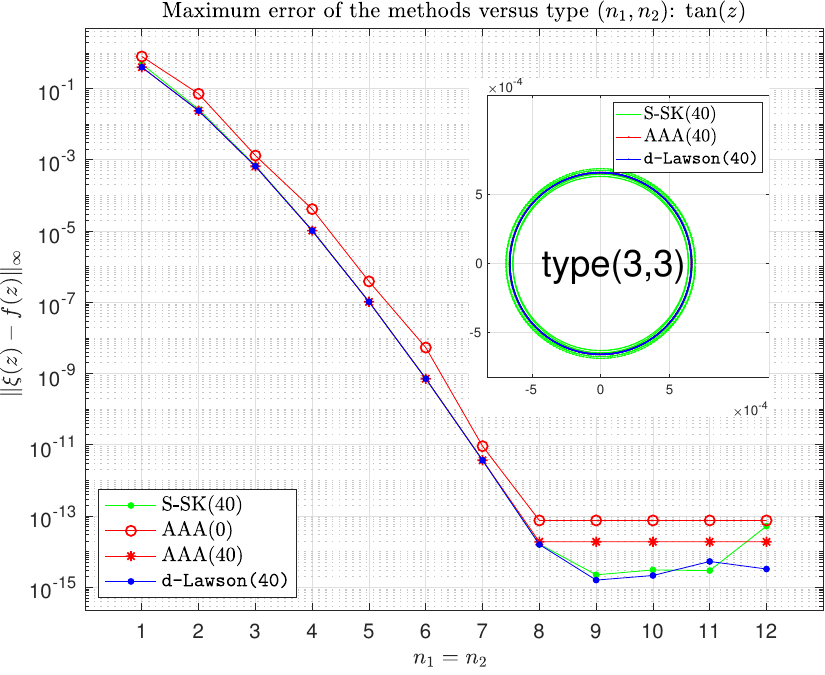} 
\includegraphics[width=0.49\columnwidth,height=0.3\textheight]{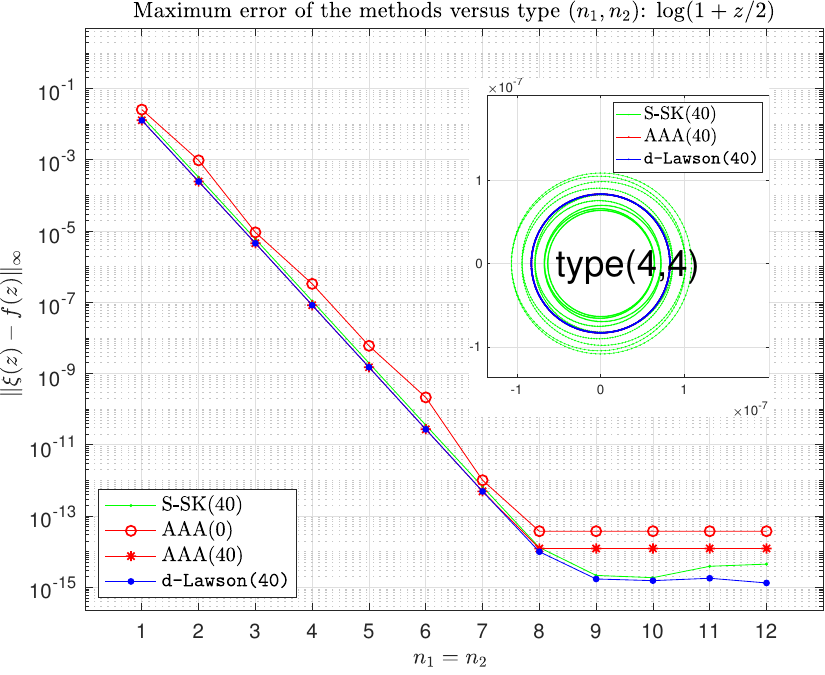}
}
\caption{\small The maximum  error $e(\xi)=\|\Bf-\xi(\bz)\|_\infty$ of
 the four methods with respect to various types $(n,n)$ in approximating $f_5(x)$ (left) and $f_6(x)$ (right). Error curves with particular types $(n,n)$  are also plotted.
}
\label{fig:comp_fun12}
\end{center}
\end{figure}

 \vskip 2mm
 \begin{center}\tabcolsep0.05in
 \captionof{table}{\small The relative duality gap $\epsilon(\bw^{(k)})$ and maximum errors for approximating $f_5(x).$ } 
\begin{tabular}[h!!!]{|c|c|c|c|c|c|c|}
\hline  
 \tabcolsep0.01in
  & $\epsilon(\bw^{(k)})$ & $\sqrt{d_{2}(\bw^{(k)})}$ &  $ e_{\tt d-Lawson(40)}(\xi) $   &      
  $e_{\rm S-SK(40)}(\xi) $  &   $e_{\rm AAA(40)}(\xi) $   &  ${e_{\rm RKFIT(40)}(\xi) }$ \\  \hline
(1,1) &        0.001685 &      3.9727e-01 &      3.9794e-01 &      4.8903e-01 &      3.9806e-01 &      5.5741e-01 \\ \hline 
(3,3) &        0.000024 &      6.5927e-04 &      6.5929e-04 &      6.8943e-04 &      6.5929e-04 &      6.8942e-04 \\ \hline 
(5,5) &        0.000009 &      1.0339e-07 &      1.0339e-07 &      1.0517e-07 &      1.0339e-07 &      1.0517e-07 \\ \hline 
(7,7) &        0.000354 &      3.6816e-12 &      3.6829e-12 &      3.7164e-12 &      3.6847e-12 &      3.7173e-12 \\ \hline 
(9,9) &        0.570238 &      6.8854e-16 &      1.6021e-15 &      2.2542e-15 &      1.9033e-14 &      1.6012e-15 \\ \hline 
(11,11) &        0.289592 &      3.7932e-15 &      5.3395e-15 &      2.9624e-15 &      1.9033e-14 &      5.2659e-15 \\ \hline 
\end{tabular}\label{tab:7-exam5} 
\label{tab:comp_fun1}
\end{center}
 
 \begin{center}\tabcolsep0.05in
 \captionof{table}{\small The relative duality gap $\epsilon(\bw^{(k)})$ and maximum errors for approximating $f_6(x).$ }
\begin{tabular}[h!!!]{|c|c|c|c|c|c|c|}
\hline  
 \tabcolsep0.01in
  & $\epsilon(\bw^{(k)})$ & $\sqrt{d_{2}(\bw^{(k)})}$ &  $ e_{\tt d-Lawson(40)}(\xi) $   &      
  $e_{\rm S-SK(40)}(\xi) $  &   $e_{\rm AAA(40)}(\xi) $   &  ${e_{\rm RKFIT(40)}(\xi) }$ \\  \hline
(1,1) &        0.000354 &      1.2849e-02 &      1.2854e-02 &      1.6878e-02 &      1.2854e-02 &      1.6944e-02 \\ \hline 
(3,3) &        0.000008 &      4.5539e-06 &      4.5539e-06 &      5.9751e-06 &      4.5539e-06 &      5.9714e-06 \\ \hline 
(5,5) &        0.000006 &      1.5094e-09 &      1.5094e-09 &      1.9795e-09 &      1.5094e-09 &      1.9792e-09 \\ \hline 
(7,7) &        0.002465 &      4.9347e-13 &      4.9469e-13 &      6.7134e-13 &      4.9556e-13 &      6.4635e-13 \\ \hline 
(9,9) &        0.529233 &      8.1036e-16 &      1.7213e-15 &      2.1658e-15 &      1.2369e-14 &      1.7919e-15 \\ \hline 
(11,11) &        0.605217 &      7.1156e-16 &      1.8024e-15 &      3.9083e-15 &      1.2369e-14 &      1.2325e-15 \\ \hline 
\end{tabular}\label{tab:comp_fun2}
\end{center}

\subsubsection{Function with singularities on a domain $\Omega$}\label{subsubsec-complex-II}
As our final part of numerical testing, we choose two functions $f_7(z)$ and $f_8(z)$ with singularities on the unit disc ${\cal D}$.  In particular, $f_7(z)$ \cite{elwi:1976} has a pole $z=-\frac12$ and  $f_8(z)$ \cite[Fig 4.3, Sec.4]{natr:2020} has  branch point singularities $z=e^{\frac{{\tt i}\pi}{4}}, e^{\frac{{\tt i}3\pi}{4}}, e^{\frac{{\tt i}5\pi}{4}}, e^{\frac{{\tt i}7\pi}{4}}$ on the unit disc $ {\cal D}$. 
\begin{align*}
& f_{7}(z) = (1+2z)^{-1/2} ~~{\rm with ~nodes}\quad z_{j} = e^{  \frac{-\pi{\rm i} }{2}+ \frac{j\pi{\rm i} }{m} }, \quad
j =0,1,\dots,  { m=2000, } \\
& f_{8}(z) = (1+z^{4})^{1/2}~~{\rm with ~nodes}\quad z_{j} = e^{{\rm i} \pi/4 \tanh\left(-12 + \frac{ 24j}{m}\right)},\quad 
j =0,1,\dots,  { m=2000. } 
\end{align*}

The maximum errors given in Figure \ref{fig:comp_fun34} again demonstrate the difficulty of computing the   rational minimax approximation whenever the maximum error is near or below the machine precision for the four methods. In particular, for $f_7(z)$, the computed approximants of degrees  $n_1=n_2\ge 9$ from the four methods are not the   minimax solutions of \eqref{eq:bestf} by checking the left of Figure \ref{fig:comp_fun34}  and  $\epsilon(\bw^{(k)})$ in  Table \ref{tab:7-exam7}. Similarly for $f_8(z)$, by investigating $\epsilon(\bw^{(k)})$ in Table \ref{tab:7-exam8}, we know that the computed solutions with degrees $n_1=n_2>14$ {are only {near best}, but are also} good rational approximants. For {\tt d-Lawson}, increasing the number of iterations $k$ can improve the accuracy with smaller gaps  $\epsilon(\bw^{(k)})$ (for instance, gaps $\epsilon(\bw^{(k)})$ for  degrees $n_1=n_2 \ge 14$  get to the level of $10^{-2}$ when $k=100$ for $f_8(z)$). Overall,  {{\tt d-Lawson}  (Algorithm \ref{alg:Lawson})} turns out to be an effective approach for  the rational minimax problem \eqref{eq:bestf}.
 
\begin{figure}[h!!!]
\begin{minipage}[t]{1\textwidth}
\begin{center}
{ \hskip-3.45mm
\includegraphics[width=0.49\columnwidth,height=0.3\textheight]{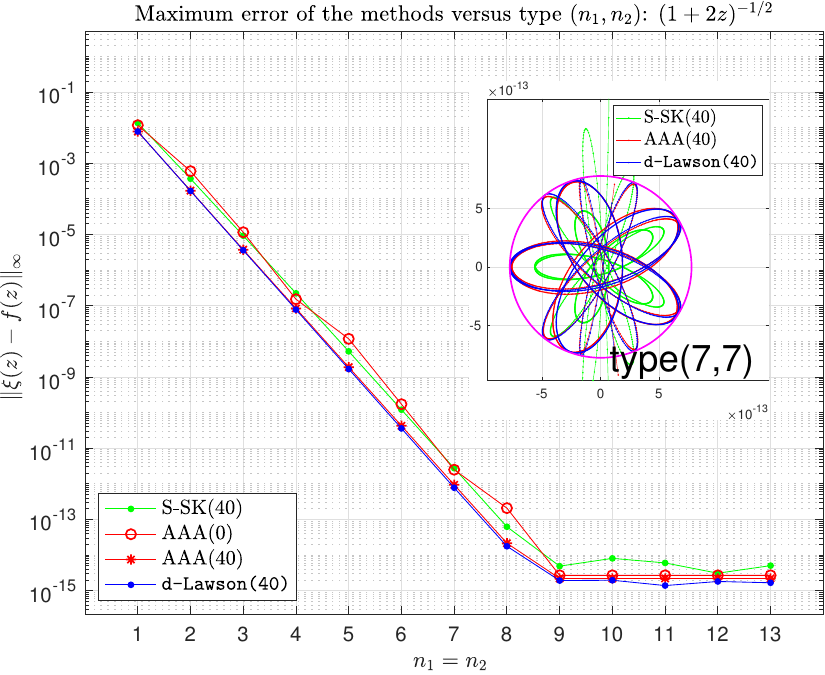} 
\includegraphics[width=0.49\columnwidth,height=0.3\textheight]{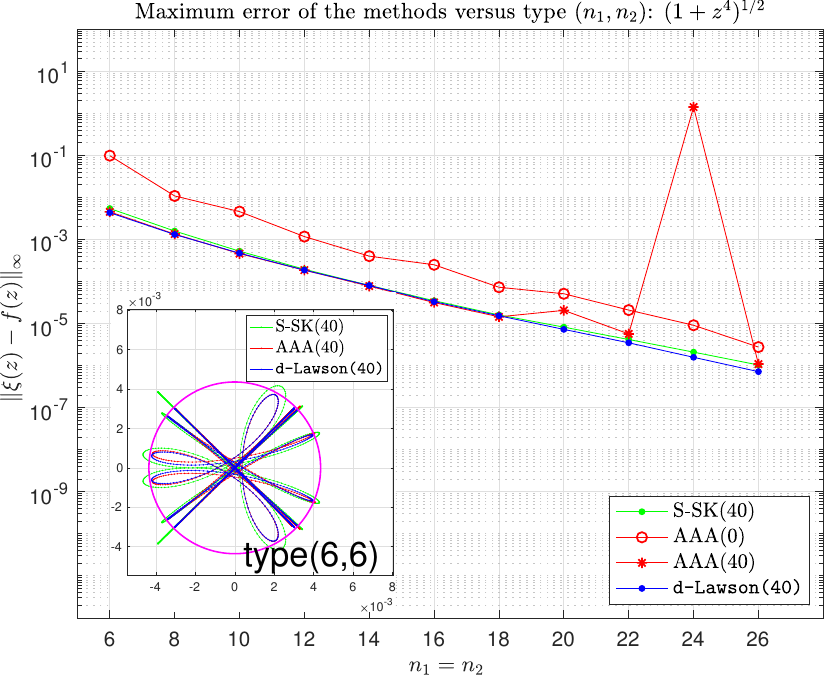}
}
\caption{\small The maximum  error $e(\xi)=\|\Bf-\xi(\bz)\|_\infty$ of
 the four methods with respect to various types $(n,n)$ in approximating $f_7(x)$ (left) and $f_8(x)$ (right). Error curves with particular types $(n,n)$  and the maximum circle  (in magenta) $\{z\in \bbC:|z|\le e(\xi^{(k)})\}$ with the radius  $e(\xi^{(k)})$ computed from {\tt d-Lawson}(40) are also plotted.}
\label{fig:comp_fun34}
\end{center}

\begin{center}\tabcolsep0.05in
\captionof{table}{\small The relative duality gap $\epsilon(\bw^{(k)})$ and maximum errors for approximating  $f_7(x).$ }
\begin{tabular}[h!!!]{|c|c|c|c|c|c|c|}
\hline  
\tabcolsep0.01in
  & $\epsilon(\bw^{(k)})$ & $\sqrt{d_{2}(\bw^{(k)})}$ &  $ e_{\tt d-Lawson(40)}(\xi) $   &      
  $e_{\rm S-SK(40)}(\xi) $  &   $e_{\rm AAA(40)}(\xi) $   &  ${e_{\rm RKFIT(40)}(\xi) }$ \\  \hline
(1,1) &        0.020223 &      7.6419e-03 &      7.7996e-03 &      1.3523e-02 &      7.7565e-03 &      1.3517e-02 \\ \hline 
(3,3) &        0.014089 &      3.5598e-06 &      3.6107e-06 &      9.2676e-06 &      3.7489e-06 &      9.2674e-06 \\ \hline 
(5,5) &        0.014708 &      1.6529e-09 &      1.6775e-09 &      5.2525e-09 &      1.8907e-09 &      5.2524e-09 \\ \hline 
(7,7) &        0.017286 &      7.6671e-13 &      7.8020e-13 &      2.7591e-12 &      9.4454e-13 &      2.7525e-12 \\ \hline 
(9,9) &        0.590636 &      7.9275e-16 &      1.9365e-15 &      4.9164e-15 &      2.2220e-15 &      3.8984e-15 \\ \hline 
(11,11) &        0.501714 &      6.9812e-16 &      1.4010e-15 &      6.0568e-15 &      2.2220e-15 &      4.0665e-15 \\ \hline 
\end{tabular}\label{tab:7-exam7}
\end{center}

\vskip 5mm
\begin{center}\tabcolsep0.05in
\captionof{table}{\small The relative duality gap $\epsilon(\bw^{(k)})$ and maximum errors for approximating  $f_8(x).$ }
\begin{tabular}[h!!!]{|c|c|c|c|c|c|c|}
\hline  
\tabcolsep0.01in
  & $\epsilon(\bw^{(k)})$ & $\sqrt{d_{2}(\bw^{(k)})}$ &  $ e_{\tt d-Lawson(40)}(\xi) $   &      
  $e_{\rm S-SK(40)}(\xi) $  &   $e_{\rm AAA(40)}(\xi) $   &  ${e_{\rm RKFIT(40)}(\xi) }$ \\  \hline
(6,6) &        0.036948 &      4.1942e-03 &      4.3551e-03 &      5.4714e-03 &      4.5470e-03 &      5.7982e-03 \\ \hline 
(10,10) &        0.095931 &      4.2888e-04 &      4.7439e-04 &      5.2787e-04 &      4.6216e-04 &      5.6462e-04 \\ \hline 
(14,14) &        0.195433 &      6.4913e-05 &      8.0681e-05 &      8.1036e-05 &      7.8097e-05 &      7.9326e-05 \\ \hline 
(18,18) &        0.232264 &      1.1807e-05 &      1.5379e-05 &      1.6084e-05 &      1.4523e-05 &      1.7184e-05 \\ \hline 
(22,22) &        0.349815 &      2.2758e-06 &      3.5002e-06 &      4.1924e-06 &      5.7008e-06 &      4.5689e-06 \\ \hline 
(26,26) &        0.358080 &      4.6298e-07 &      7.2124e-07 &      1.0385e-06 &      1.0908e-06 &      1.1394e-06 \\ \hline 
\end{tabular}\label{tab:7-exam8}
\end{center}
\end{minipage}
\end{figure}

We conclude this section by emphasizing again the properties of the monotonic convergence, the weak and strong duality related with {{\tt d-Lawson}  (Algorithm \ref{alg:Lawson})} in the complex situation. These properties are closely connected to the underlying dual {problem} \eqref{eq:dualPX} and provide theoretical foundations for the new Lawson's iteration.  In Figure \ref{fig:comp_fun1}, we plot   the sequences $\left\{e(\xi^{(k)})\right\}$  and $\left\{\sqrt{d_2(\bw^{(k)})}\right\}$  for $f_5(z)$ and $f_7(z)$ with type ${(3,7)}$ and $(5,5)$, respectively. Error curves $\xi^{(k)}(\bz)-\Bf$  in the complex plane after certain numbers of iterations are also included. We observed that the weight $\bw^{(k)}$ computed by {\tt d-Lawson} verifies strong duality \eqref{eq:strongdualityRuttan}, implying Ruttan's sufficient condition holds and {\tt d-Lawson} successfully solves globally the dual {problem} \eqref{eq:dualPX}.

\begin{figure}[h!!!]
\begin{center}
{
\includegraphics[width=0.47\columnwidth,height=0.3\textheight]{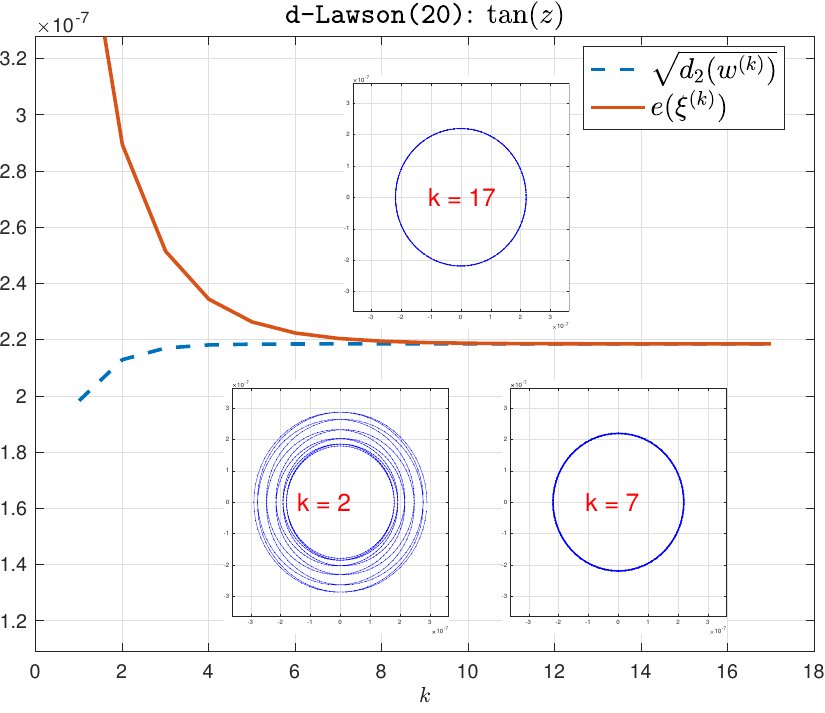} \hskip 4mm
\includegraphics[width=0.47\columnwidth,height=0.3\textheight]{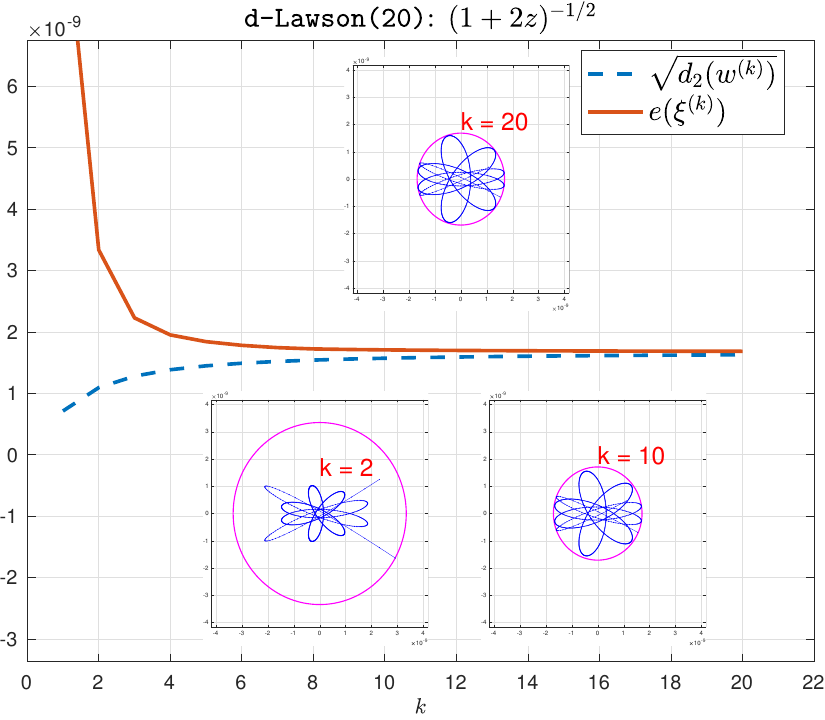}
}
\caption{\small The sequences $\left\{e(\xi^{(k)})\right\}$  and $\left\{\sqrt{d_2(\bw^{(k)})}\right\}$ and error curves $\xi^{(k)}(\bz)-\Bf$  in the complex plane from Algorithm \ref{alg:Lawson} for $f_5(z)$ (left)  with type (3,7), and for $f_7(z)$ (right)  with type (5,5).}\vskip -2mm
\label{fig:comp_fun1}
\end{center}
\end{figure}

\section{Concluding remarks and further issues}\label{sec_conclude} 
{In this paper, we have constructed a convex optimization problem \eqref{eq:dualPX}   to  solve the discrete rational minimax approximation \eqref{eq:bestf}, and designed  a  new version of Lawson's iteration {\tt d-Lawson} (Algorithm \ref{alg:Lawson})}. The convex {optimization} \eqref{eq:dualPX} is derived as a dual {problem} of a certain linearized reformulation of the  rational minimax approximation. We showed that there is no duality gap between the primal and the dual under Ruttan's sufficient condition. In practice, Ruttan's sufficient condition, or equivalently, strong duality, is checkable through \eqref{eq:strongdualityRuttan} after solving the dual {problem} \eqref{eq:dualPX} globally. The  relative duality gap $\epsilon(\bw^{(k)})$ also reflects the accuracy of the minimax approximant. Our numerical experiments  indicate that the easily-implementable Lawson's iteration (Algorithm \ref{alg:Lawson}) is an effective approach for solving the convex programming \eqref{eq:dualPX}.

Even {if} the dual {problem} \eqref{eq:dualPX} can provide a new perspective for {computing} the rational minimax approximation \eqref{eq:bestf}, several important issues have not been discussed in this paper. Indeed, the observed linear and monotonic convergence in our numerical experiments of the new Lawson's iteration has not been established in theory.  {It was pointed out in \cite{natr:2020} for the AAA-Lawson algorithm}   {\it `it is a challenge for the future to develop further improvements to the algorithm that might ensure its convergence in all circumstances and to support these with theoretical guarantees'}. {We} think an associated  dual problem  can play a role {in analyzing} the theoretical convergence of the corresponding  Lawson's iteration. To improve the stability of {{\tt d-Lawson}  (Algorithm \ref{alg:Lawson})}, barycentric representations, instead of the Vandermonde basis,  may also be employed for solving dual {problem} \eqref{eq:dualPX}. Moreover, the present new Lawson's iteration can also fail to solve the dual \eqref{eq:dualPX} globally (i.e., the duality gap $\epsilon(\bw^{(k)})$ does not converge to zero), and even {if} it converges, the convergence in some cases can be slow  and  less efficient than the AAA, AAA-Lawson algorithm. Fast and effective methods for globally solving the dual {problem} are certainly desired as detecting the reference points (i.e., nodes in ${\cal X}_e(\xi^*)$ of \eqref{eq:extremalset}) corresponds to finding the nonzero weights $w_j^*$ at the maximizer $\bw^*$ of the dual {problem} \eqref{eq:dualPX};  reference points are  helpful in   the representation of $\xi^*=p^*/q^*$ either by finding roots \cite{itna:2018} of $p^*$ and $q^*$, or their associated  coefficient vectors in certain bases. As the dual \eqref{eq:dualPX}  is to maximize a concave function $d_2(\bw)$ over   the probability simplex  ${\cal S}$ and  both the gradient and the Hessian of $d_2(\bw)$ are provided in section \ref{sec_gradHess}, other methods, including the projected gradient iteration, the interior-point method \cite[Chapter 19]{nowr:2006} or  methods for convex optimization \cite{nest:2018}, can be applied for solving \eqref{eq:dualPX}.  All these issues deserve further {investigation}.

\appendix
\section{Appendix}\label{AppendixA}
We provide the proofs for Theorem \ref{thm:linearity},  Proposition \ref{prop:grad} and Corollary \ref{cor:Hess2} in this appendix. 
\vskip 2mm

\noindent {\it Proof of  Theorem \ref{thm:linearity}.}
The result is trivial if $\eta_\infty=0$. For $\eta_\infty>0$, it suffices to prove the result for $\alpha=1$ as $|f_jq(x_j)-p(x_j)|^\alpha\le \eta |q(x_j)|^\alpha$ holds if and only if $|f_jq(x_j)-p(x_j)| \le \eta^\frac{1}{\alpha} |q(x_j)|$. 

Denote by $\wtd\eta$ the infimum of \eqref{eq:linearity} and $\wtd\eta\le \eta_\infty$ is obvious as the triple $(\eta_\infty,p^*,q^*)$ is feasible for  \eqref{eq:linearity}. Suppose $\wtd\eta<\eta_\infty$. Then by definition of the infimum, for $\eta_\epsilon=\wtd \eta+\epsilon$ with $0<\epsilon=\frac{\eta_\infty-\wtd\eta}{2}$, there is a pair $(p_\epsilon,q_\epsilon)$ with $p_\epsilon\in \bbP_{n_1}$ and $q_\epsilon\in \bbP_{n_2}\setminus\{0\}$ such that 
$$
 |f_jq_\epsilon(x_j)-p_\epsilon(x_j)| \le \eta_\epsilon |q_\epsilon(x_j)|, \quad \forall j\in [m].
$$  
The proof is finished if $|q_\epsilon(x_j)|\ne 0$   $\forall j\in [m]$ because the rational {approximant} $\xi_{\epsilon}(x)=p_\epsilon(x)/q_\epsilon(x)\in \scrR_{(n_1,n_2)}$ gives $\|\Bf-\xi_{\epsilon}(\bx)\|_\infty\le \eta_\epsilon<\eta_\infty.$
Thus, we assume $\emptyset \ne {\cal J}_\epsilon:=\{j\in[m]|q_{\epsilon}(x_j)=0\}\subset[m]$. Since $q_\epsilon\not \equiv 0$, the cardinality $|{\cal J}_\epsilon|$ satisfies $1\le |{\cal J}_\epsilon|\le n_2$ and  $[m]\setminus{\cal J}_\epsilon\ne  \emptyset$. By $
 |f_jq_\epsilon(x_j)-p_\epsilon(x_j)| \le \eta_\epsilon |q_\epsilon(x_j)| 
$, it is true that $p_{\epsilon}(x_j)=0$ for $j\in {\cal J}_\epsilon$.
Now for any $\delta>0$, we define $$p_\delta(x)=p_\epsilon(x)+\delta p^*(x)\in \bbP_{n_1} \quad {\rm and} \quad q_\delta(x)=q_\epsilon(x)+\delta q^*(x)\in \bbP_{n_2}.$$ Assume   $\delta>0$ is sufficiently small so that $q_\delta(x_j)\ne 0$ for all $j\in [m]$.  We shall show that for any sufficiently small $\delta>0$, the rational {approximant} $\xi_{\delta}(x)=p_\delta(x)/q_\delta(x)$ satisfies 
$
\|\Bf-\xi_{\delta}(\bx)\|_\infty\le \eta_\infty
$
and thus $\xi_{\delta} \ne \xi_* $ is another solution to \eqref{eq:bestf}. To this end, we  consider the cases $j\in  {\cal J}_\epsilon$ and $j\in  [m]\setminus{\cal J}_\epsilon$. 

For $j\in  {\cal J}_\epsilon$, we have $$
|f_jq_\delta(x_j)-p_\delta(x_j)|=\delta|f_jq^*(x_j)-p^*(x_j)|\le \eta_\infty  \delta |q^*(x_j)|=\eta_\infty   |q_\delta(x_j)|.
$$

For $j\in  [m]\setminus{\cal J}_\epsilon$, note that 
\begin{align*}
|f_jq_\delta(x_j)-p_\delta(x_j)|&\le \delta|f_jq^*(x_j)-p^*(x_j)|+ |f_jq_\epsilon(x_j)-p_\epsilon(x_j)| \le \eta_\infty  \delta |q^*(x_j)|+\eta_\epsilon   |q_\epsilon(x_j)|.
\end{align*}
Let $$c_0=\max_{j\in [m]\setminus{\cal J}_\epsilon}|q^*(x_j)|>0 \quad {\rm and}\quad c_1=\min_{j\in [m]\setminus{\cal J}_\epsilon}|q_\epsilon(x_j)|>0$$ and choose $0<\delta<\frac{c_1\epsilon}{4c_0\eta_\infty}$. We have
\begin{align*}
\left(\eta_\epsilon+\frac{\epsilon}{2}\right)|q_\delta(x_j)|&\ge \left(\eta_\epsilon+\frac{\epsilon}{2}\right)|q_\epsilon(x_j)|- \left(\eta_\epsilon+\frac{\epsilon}{2}\right) \delta |q^*(x_j)|\\
&> \left(\eta_\epsilon+\frac{\epsilon}{2}\right)|q_\epsilon(x_j)|-\eta_\infty\delta c_0\\
&\ge  \eta_\epsilon|q_\epsilon(x_j)| +\frac{\epsilon}{2} c_1-\eta_\infty\delta c_0\\
&\ge \eta_\epsilon|q_\epsilon(x_j)| +\eta_\infty  \delta c_0\\
&\ge \eta_\epsilon   |q_\epsilon(x_j)|+\eta_\infty  \delta |q^*(x_j)|;
\end{align*}
therefore, for any $j\in  [m]\setminus{\cal J}_\epsilon$, it holds that
$$
|f_jq_\delta(x_j)-p_\delta(x_j)|\le \left(\eta_\epsilon+\frac{\epsilon}{2}\right)|q_\delta(x_j)|<\eta_\infty|q_\delta(x_j)|,
$$
which, with $j\in  {\cal J}_\epsilon$, shows that $\xi_\delta=p_\delta/q_\delta$ satisfies $\|\xi_\delta(\bx)-\Bf\|_\infty=\eta_\infty$ and is a solution to \eqref{eq:bestf} for any sufficiently small $\delta>0$. Finally, we show that $\xi_\delta\not\equiv\xi^*$ because if $\xi_\delta\equiv\xi^*$, i.e., $p_\epsilon q^*\equiv p^* q_\epsilon$, we have $p^*/q^*\equiv \wtd p_\epsilon/\wtd q_\epsilon$, where $\wtd p_\epsilon/\wtd q_\epsilon$ is the irreducible representation of $  p_\epsilon/  q_\epsilon$. Since $p_\epsilon$ and $q_\epsilon$ have at least common zeros of $x_j$ for $j\in {\cal J}_\epsilon$ but $p^*/q^*$ is non-degenerate, it contradicts with $p^*/q^*\equiv \wtd p_\epsilon/\wtd q_\epsilon$. Thus  $\eta_\infty=\wtd \eta$ and the conclusion follows. $\hfill\square$

\vskip 2mm

\noindent {\it Proof of  Proposition \ref{prop:grad}.}
(i)  For $\bw>0$, we know from Proposition \ref{prop:dual_GEP} (iii) that $\wtd n_1=n_1+1$ and $\wtd n_2=n_2+1$, and $d_2(\bw)$ is a simple eigenvalue of a positive semi-definite Hermitian matrix and thus is differentiable at $\bw$ (see e.g.,  \cite[Section 7.2.2]{govl:2013}). {We obtain the formulation of the gradient $\nabla d_2(\bw)$ via computing the directional  derivative along a given  $\Delta_w\in \bbR^m$. To this end, let $\bw(t)= \bw+t \Delta_w$ and $\bc(\bw(t))$ be the corresponding eigenvector in \eqref{eq:Heig} for any sufficiently small $t$}. We have 
$$
\left(A_{\bw(t)}-d_2(\bw(t))B_{\bw(t)})\bc(\bw(t)\right)=0,~~\bc(\bw(t))^{\HH}B_{\bw(t)}\bc(\bw(t))=1.
$$
Differentiate the first term with respect to $t$ and note  $\dot{A}_{\bw(0)}=[-\Psi,F\Phi]^{\HH}\diag(\Delta_w)[-\Psi,F\Phi]$ and $\dot{B}_{\bw(0)}=[0,\Phi]^{\HH}\diag(\Delta_w)[0,\Phi]$ to have  
\begin{align*}
\left(\dot{A}_{\bw(0)}-\dot{d}_2(\bw(0))B_{\bw(0)}-d_2(\bw(0))\dot{B}_{\bw(0)}\right)\bc(\bw(0))+(A_{\bw}-d_2(\bw)B_{\bw})\dot{\bc}(\bw(0))=0;
\end{align*}
Pre-multiplying $\bc(\bw)^{\HH}$ on both sides and using $$\bc(\bw)^{\HH}(A_{\bw}-d_2(\bw)B_{\bw})=0 \quad {\rm and}\quad \bc(\bw(t))^{\HH}B_{\bw(t)}\bc(\bw(t))=1$$ yield 
\begin{align}\nonumber
\dot{d}_2(\bw(0))   &=\nabla d_2(\bw)^{\T}\Delta_w = \bc^{\HH}(\bw(0))\dot{A}_{\bw(0)}\bc(\bw(0))-d_2(\bw(0))\bc^{\HH}(\bw(0))\dot{B}_{\bw(0)}\bc(\bw(0))\\\nonumber
 &=\left(|F \bq -\bp |^2-d_2(\bw)|\bq |^2\right)^{\T}\Delta_w,
 \end{align}
where $|F \bq -\bp |^2-d_2(\bw)|\bq |^2$ is defined in \eqref{eq:gradd2}. Thus (i) follows.

(ii) {To obtain the formulation of the Hessian, we first compute its $(j,k)$th entry.  Note}   
\begin{align} \nonumber
 [\nabla^2 d_2(\bw)]_{jk} 
=&\frac{\partial [\nabla  d_2(\bw)]_j}{\partial w_k}=\be_j^{\T}\nabla^2 d_2(\bw)\be_k\\\nonumber
=&2{\rm Re}\left(\overline{(f_j q_j- p_j)}\frac{\partial(f_j q_j- p_j)}{\partial w_k}-d_2(\bw)\overline{ q_j}\frac{\partial  q_j}{\partial w_k}\right)- \frac{\partial d_2(\bw)}{\partial w_k} | q_j|^2\\ 
\label{eq:hess1}
=&2\be_j^{\T}{\rm Re}\left(\left( {R}_1^{\HH} E_1 -d_2(\bw) {R}_2^{\HH}E_2\right)\frac{\partial \bc(\bw)}{\partial w_k}\right)- [\nabla  d_2(\bw)]_k|  q_j|^2. 
\end{align}
Now, for $\frac{\partial \bc(\bw)}{\partial w_k}$,  we can compute the partial derivative with respect to $w_k$ of the first equation in \eqref{eq:dual_GEP} to have
\begin{align}\nonumber
0&=\left(E_1^{\HH}\be_k\be_k^{\T}E_1-[\nabla d_2(\bw)]_k B_{\bw}-d_2(\bw)E_2^{\HH}\be_k\be_k^{\T}E_2\right)\bc(\bw)+\left(A_{\bw}-d_2(\bw)B_{\bw}\right)\frac{\partial \bc(\bw)}{\partial w_k}\\\nonumber
&=(E_1^{\HH}R_1 - B_{\bw}\bc(\bw)(\nabla d_2(\bw))^{\T}-d_2(\bw)E_2^{\HH}R_2 )\be_k+(A_{\bw}-d_2(\bw)B_{\bw})\frac{\partial \bc(\bw)}{\partial w_k}\\\label{eq:partialcwk}
&=R_3^{\HH}\be_k+(A_{\bw}-d_2(\bw)B_{\bw})\frac{\partial \bc(\bw)}{\partial w_k},
\end{align} 
i.e., $
\left(A_{\bw}-d_2(\bw)B_{\bw}\right)\frac{\partial \bc(\bw)}{\partial w_k}=-R_3\be_k.
$
Note that $\bc(\bw)^{\HH}R_3^{\HH}\be_k=0$ and $(A_{\bw}-d_2(\bw)B_{\bw})\bc(\bw)=0$; hence $\frac{\partial \bc(\bw)}{\partial w_k}$ can be expressed as
\begin{equation}\label{eq:hess2}
\frac{\partial \bc(\bw)}{\partial w_k}=-(A_{\bw}-d_2(\bw)B_{\bw})^{\dag}R_3^{\HH}\be_k+\iota_k \bc(\bw),~~{\rm for~some ~}\iota_k\in \bbC.
\end{equation}
To obtain $\iota_k$, from $\frac{\bc(\bw)^{\HH}B_{\bw} \bc(\bw)}{\partial w_k}=0$, $\bc(\bw)^{\HH}B_{\bw}\bc(\bw)=1$ and  \eqref{eq:hess2}, it follows 
\begin{align*}
0&=2{\rm Re}\left(\bc(\bw)^{\HH}B_w \frac{\partial \bc(\bw)}{\partial w_k}\right)+\bc(\bw)^{\HH}E_2^{\T}\be_k\be_k^{\T}E_2\bc(\bw)\\
&=2{\rm Re}\left(\bc(\bw)^{\HH}B_w \frac{\partial \bc(\bw)}{\partial w_k}\right)+| q_k|^2\\
&=-2{\rm Re}\left(\bc(\bw)^{\HH}B_w (A_{\bw}-d_2(\bw)B_{\bw})^{\dag}R_3^{\HH}\be_k\right)+2{\rm Re}(\iota_k)+| q_k|^2
\end{align*}
giving $2{\rm Re}(\iota_k)=2{\rm Re}\left(\bc(\bw)^{\HH}B_w (A_{\bw}-d_2(\bw)B_{\bw})^{\dag}R_3^{\HH}\be_k\right)-| q_k|^2.$
Now, plug it and \eqref{eq:hess2} into \eqref{eq:hess1} to have 
\begin{align} \nonumber
  [\nabla^2 d_2(\bw)]_{jk} 
=&\frac{\partial [\nabla  d_2(\bw)]_j}{\partial w_k}=\be_j^{\T}\nabla^2 d_2(\bw)\be_k\\\nonumber
 \label{eq:hess3}
=&-2\be_j^{\T}{\rm Re}\left(\left( {R}_1^{\HH} E_1 -d_2(\bw) {R}_2^{\HH}E_2\right) (A_{\bw}-d_2(\bw)B_{\bw})^{\dag}R_3^{\HH}\be_k\right)\\\nonumber
&+2\be_j^{\T}{\rm Re}\left(\left( {R}_1^{\HH} E_1 -d_2(\bw) {R}_2^{\HH}E_2\right)  \bc(\bw)\iota_k\right)- [\nabla  d_2(\bw)]_k| q_j|^2\\\nonumber
=&-2\be_j^{\T}{\rm Re}\left(\left( {R}_1^{\HH} E_1 -d_2(\bw) {R}_2^{\HH}E_2\right) (A_{\bw}-d_2(\bw)B_{\bw})^{\dag}R_3^{\HH}\be_k\right)\\\nonumber
&+2[\nabla d_2(\bw)]_j {\rm Re}\left( \iota_k\right)- [\nabla  d_2(\bw)]_k| q_j|^2\\\nonumber
=&-2\be_j^{\T}{\rm Re}\left(\left( {R}_1^{\HH} E_1 -d_2(\bw) {R}_2^{\HH}E_2\right) (A_{\bw}-d_2(\bw)B_{\bw})^{\dag}R_3^{\HH}\be_k\right)\\\nonumber
&+2[\nabla d_2(\bw)]_j  {\rm Re}\left(\bc(\bw)^{\HH}B_{\bw} (A_{\bw}-d_2(\bw)B_{\bw})^{\dag}R_3\be_k\right)\\\nonumber
&-[\nabla d_2(\bw)]_j | q_k|^2- [\nabla  d_2(\bw)]_k| q_j|^2\\\nonumber
=&-2\be_j^{\T}{\rm Re}\left(\left({R}_1^{\HH} E_1 -d_2(\bw) {R}_2^{\HH}E_2 -\nabla d_2(\bw)\bc(\bw)^{\HH}B_{\bw}\right) (A_{\bw}-d_2(\bw)B_{\bw})^{\dag}R_3^{\HH}\be_k\right)\\\nonumber
&-[\nabla d_2(\bw)]_j | q_k|^2- [\nabla  d_2(\bw)]_k| q_j|^2\\\nonumber
=&-\be_j^{\T}\left({\rm Re}\left(2R_3 (A_{\bw}-d_2(\bw)B_{\bw})^{\dag}R_3^{\HH}\right)-\nabla d_2(\bw)(|\bq|^2)^{\T}-|\bq|^2 (\nabla d_2(\bw))^{\T}\right)\be_k.
\end{align}
{With this formulation for the $(j,k)$th entry of $\nabla^2 d_2(\bw)$,  \eqref{eq:Hess} follows.} $\hfill\square$

\vskip 2mm

\noindent {\it Proof of Corollary \ref{cor:Hess2}}. 
The new formulation of $\nabla^2 d_2(\bw)$ can be obtained from the proof for Proposition \ref{prop:grad} (ii).
Using the QR factorizations 
$W^{\frac12}\Phi=Q_qR_q$ and $W^{\frac12}\Psi=Q_pR_p$,  we first have 
$$
A_{\bw}-d_2(\bw)B_{\bw}=R_{pq}^{\HH}D R_{pq},~~R_{pq}^{-\HH}R_3^{\HH}=\what R_3^{\HH},
$$
where $R_{pq}:=\diag(R_p,R_q)$.
The key observation is to note that the relation \eqref{eq:partialcwk} leads to 
\begin{align*}
D \left(R_{pq}\frac{\partial \bc(\bw)}{\partial w_k}\right)=-R_{pq}^{-\HH}R_3^{\HH}\be_k=-\what R_3^{\HH} \be_k.
\end{align*}
Similarly, from  $0=\bc(\bw)^{\HH}R_3^{\HH}\be_k=(R_{pq} \bc(\bw))^{\HH}R_{pq}^{-\HH}R_3^{\HH}\be_k=(R_{pq} \bc(\bw))^{\HH}\what R_3^{\HH}\be_k$ and $0=R^{-\HH}_{pq}(A_{\bw}-d_2(\bw)B_{\bw})\bc(\bw)= D(R_{pq}\bc(\bw))$; hence $R_{pq}\frac{\partial \bc(\bw)}{\partial w_k}$ can be expressed as
\begin{equation}\nonumber
R_{pq}\frac{\partial \bc(\bw)}{\partial w_k}=-D^{\dag}\what R_3^{\HH}\be_k+\iota_k R_{pq}\bc(\bw),~~{\rm for~some ~}\iota_k\in \bbC,
\end{equation}
or equivalently, 
$
 \frac{\partial \bc(\bw)}{\partial w_k}=-R_{pq}^{-1}D^{\dag}\what R_3^{\HH}\be_k+\iota_k  \bc(\bw),~~{\rm for~some ~}\iota_k\in \bbC.
$
The rest  follows similarly {to}  the proof for Proposition \ref{prop:grad} (ii) and we omit the details. $\hfill\square$

{
\section*{Acknowledgements}
The authors would like to thank the anonymous referees for their careful reading, useful comments and suggestions to improve the presentation of the paper.}

\def\noopsort#1{}\def\l{\char32l}\def\v#1{{\accent20 #1}}
  \let\^^_=\v\def\hbk{hardback}\def\pbk{paperback}
\providecommand{\href}[2]{#2}
\providecommand{\arxiv}[1]{\href{http://arxiv.org/abs/#1}{arXiv:#1}}
\providecommand{\url}[1]{\texttt{#1}}
\providecommand{\urlprefix}{URL }


%
\end{document}